\newtheorem{teo}{Theorem}[section]
\theoremstyle{definition}
\def\ep{\varepsilon}
\def\a{\alpha}
\def\R{\mathbb R}
\def\D{{\mathcal D}}
\begin{document}
\title[A heat equation with memory: large-time behavior]{A heat equation with memory: large-time behavior}

\author[Cort\'{a}zar,  Quir\'{o}s \and Wolanski]{Carmen Cort\'{a}zar,  Fernando  Quir\'{o}s, \and Noemi Wolanski}

\address{Carmen Cort\'{a}zar\hfill\break\indent
Departamento  de Matem\'{a}tica, Pontificia Universidad Cat\'{o}lica
de Chile \hfill\break\indent Santiago, Chile.} \email{{\tt
ccortaza@mat.puc.cl} }

\address{Fernando Quir\'{o}s\hfill\break\indent
Departamento  de Matem\'{a}ticas, Universidad Aut\'{o}noma de Madrid,
\hfill\break\indent 28049-Madrid, Spain,
\hfill\break\indent and Instituto de Ciencias Matem\'aticas ICMAT (CSIC-UAM-UCM-UC3M),
\hfill\break\indent 28049-Madrid, Spain.} \email{{\tt
fernando.quiros@uam.es} }

\address{Noemi Wolanski \hfill\break\indent
Departamento  de Matem\'{a}tica, FCEyN,  UBA,
\hfill\break \indent and
IMAS, CONICET, \hfill\break\indent Ciudad Universitaria, Pab. I,\hfill\break\indent
(1428) Buenos Aires, Argentina.} \email{{\tt wolanski@dm.uba.ar} }

\thanks{This project has received funding from the European Union's Horizon 2020 research and innovation programme under the Marie Sklodowska-Curie grant agreement No.\,777822. C.\,Cort\'azar supported by  FONDECYT grant 1190102 (Chile). F.\,Quir\'os supported by project MTM2017-87596-P (Spain). N.\,Wolanski supported by
CONICET PIP625, Res. 960/12, ANPCyT PICT-2012-0153, UBACYT X117 and MathAmSud 13MATH03 (Argentina).}

\keywords{Heat equation with nonlocal time derivative, Caputo derivative, asymptotic behavior.}

\subjclass[2010]{%
35B40, 
35R11, 
35R09, 
45K05. 
}

\date{}

\begin{abstract}
We study the large-time behavior in  all $L^p$ norms and in different space-time scales of solutions to  a heat equation with a Caputo  $\alpha$-time derivative posed in $\mathbb{R}^N$. The initial data are assumed to be integrable, and, when required, to be also in $L^p$. A main difficulty in the analysis comes from the singularity in space at the origin of the fundamental solution of the equation when~$N>1$.

The rate of decay in $L^p$ norm  in the characteristic scale, $|x|\asymp t^{\alpha/2}$, dictated by the scaling invariance of the equation,  is $t^{-\frac{\alpha N}{2}(1-\frac1p)}$. In compact sets it is $t^{-\alpha/2}$ for $N=1$, $t^{-\alpha}$ for $N\ge 3$, and $t^{-\alpha}\log t$ in the critical dimension $N=2$. In intermediate scales, going to infinity but more slowly than $t^{\alpha/2}$, we have an intermediate decay rate. In fast scales, going to infinity faster than $t^{\alpha/2}$, there is no universal rate, valid for all solutions, as we will show by means of some examples. Anyway, in such scales solutions decay faster than in the characteristic one.

When divided by the decay rate, solutions behave for large times  in the characteristic scale like $M$ times the fundamental solution, where $M$ is the integral of the initial datum. The situation is very different in compact sets, where they converge to the Newtonian potential of the initial datum  if $N\ge 3$, one of the main novelties of the paper, and to  a constant if $N=1,2$. In intermediate scales they approach a multiple of the fundamental solution of the Laplacian if $N\ge 3$, and a constant in low dimensions. The asymptotic behavior in scales that go to infinity faster than the characteristic one depends strongly on the behavior of the initial datum at infinity. We give results for certain initial data with specific decays.
\end{abstract}

\maketitle

\section{Introduction}
\label{sect-Introduction} \setcounter{equation}{0}

The aim of this paper is to study the large time behavior of solutions to the Cauchy problem
\begin{equation}
\label{eq:main}
\tag{P}
\partial_t^\alpha u=\Delta u \quad\text{in } \R^N\times\R_+,\qquad
u(\cdot,0)=u_0\quad\text{in } \R^N,
\end{equation}
where $\partial_t^\alpha$, $\alpha\in(0,1)$, is  the Caputo $\alpha$-derivative,
$$
\partial_t^\alpha u(x,t)=\frac1{\Gamma(1-\alpha)}\,\partial_t\int_0^t\frac{u(x,\tau)- u(x,0)}{(t-\tau)^{\alpha}}\, d\tau,
$$
which first appeared in~\cite{Caputo-1967}.
The initial data are always assumed to be non-negative, integrable and non-trivial, and when required, to  belong also to  $L^p(\mathbb{R}^N)$ for some $p\in (1,\infty]$.

While in normal
diffusions (described by the heat equation or more general parabolic equations), the
mean squared displacement of a particle is proportional to $t$ for $t$ large, in the time-fractional heat equation~\eqref{eq:main} it is proportional to $t^\alpha$. This
is the reason why, in the  range $\alpha\in (0,1)$, it is described  in the literature as a subdiffusion
equation. There are many real-world systems which show this kind of behavior,  for example, particle systems with sticking and trapping phenomena~\cite{Meerschaert-Sikorskii-2012,Shlesinger-Klafter-Wong-1982}, or fluids in porous media with memory~\cite{Caputo-1999}. Equation~\eqref{eq:main} and nonlinear variants of it are well suited to describe such systems. We refer to the expository review article~\cite{Metzler-Klafter-2000} for a detailed derivation of these equations from physics principles and for further applications of such models.

If both $u_0$ and its Fourier transform belong to $L^1(\mathbb{R}^N)$, problem~\eqref{eq:main} has a unique bounded classical solution given by
\begin{equation}
\label{eq:convolution}
u(\cdot,t)=Z(\cdot,t)*u_0,
\end{equation}
where $Z$ is the fundamental solution of the equation, that is, the solution to~\eqref{eq:main} having a Dirac mass as initial datum~\cite{Kochubei-1990,Eidelman-Kochubei-2004,Kemppainen-Siljander-Zacher-2017}. If $u_0$ is only known to be in $L^1(\mathbb{R}^N)$, the function $u$ in~\eqref{eq:convolution} is still well defined, but it is not in general a classical solution to~\eqref{eq:main}. However, it is a solution in a generalized sense~\cite{Gripenberg-1985,Kemppainen-Siljander-Zacher-2017}.
In this paper we will always deal with solutions of this kind, given by~\eqref{eq:convolution}, which are denoted in the literature as \emph{mild} solutions~\cite{Pruss-book,Kemppainen-Siljander-Zacher-2017}.

The main difficulty that we face here, as compared with the local case, is that $Z$ is singular in space at the origin if $N>1$. In fact, $Z\not\in L^p(B_1)$ if $p\ge N/(N-2)$.  Hence it cannot give the behavior in the $L^p$ norm in that range of values of $p$, or at least not in compact sets.

The large-time behavior will depend strongly not only on which $L^p$ norm we are dealing with, but also on the space-time scale   under consideration, with new features that are not present in the local case. For instance, solutions converge in compact sets for $N\ge 3$ to the Newtonian potential of the initial datum, one of the main and more novel results of the paper.

\medskip

\noindent\emph{Notations. } As is common in asymptotic analysis, $f\asymp g$ will mean that there are constants $\nu,\mu>0$ such that
$\nu g\le f\le \mu g$. The spatial integral of the solution, which is preserved along the evolution, and is hence equal to $\int_{\mathbb{R}^N}u_0$, will be denoted by $M $. The ball of radius $r$ centered at the origin will be denoted by $B_r$.

\subsection{The local case. } In order to have some perspective, let us start by recalling what is known for the standard local heat equation, which corresponds to $\alpha=1$. In this case the fundamental solution is given by
$$
\Gamma(x,t)=t^{-N/2}F(x/t^{1/2}),\quad\text{where } F(\xi)=(4\pi )^{-\frac N2}\textrm{e}^{-\frac{|\xi|^2}{4}},
$$
which belongs to $C^\infty(\mathbb{R}^N\times \mathbb{R}_+)$ and satisfies $\Gamma(\cdot,t)\in L^p(\mathbb{R}^N)$, $1\le p\le \infty$, for all $t>0$.
Thus, there is a smoothing effect: if the initial datum is integrable, the solution becomes immediately bounded and smooth for any positive time. Moreover, if $1\le p\le \infty$ we have the global convergence result
\begin{equation}
\label{eq:convergence.local}
\lim_{t\to\infty }t^{\frac{N}2(1-\frac1p)}\|u(\cdot,t)-M\Gamma(\cdot,t)\|_{L^p(\R^N)}=0, \quad\text{where } M=\int_{\mathbb{R}^N}u_0.
\end{equation}
As a consequence,
$$
\lim_{t\to\infty }t^{\frac{N}2(1-\frac1p)}\|u(\cdot,t)\|_{L^p(\R^N)}=M\lim_{t\to\infty }t^{\frac{N}2(1-\frac1p)}\|\Gamma(\cdot,t)\|_{L^p(\R^N)}=C\neq0,
$$
which implies the (sharp) decay rate $\|u(\cdot,t)\|_{L^p(\mathbb{R}^N)}\asymp t^{-\frac{N}2(1-\frac1p)}$ (we are assuming always that $M\neq0$).
Thus, for fixed $N$, the  decay rate increases with $p$, going from no decay for $p=1$, in agreement with the conservation of mass, to a decay $\|u(\cdot,t)\|_{L^\infty(\mathbb{R}^N)}\asymp t^{-\frac{N}2}$ for $p=\infty$. For fixed $p>1$ the decay rate increases without limit as the dimension increases.

The heat equation has a \emph{characteristic} scale, $|x|\asymp t^{1/2}$, dictated by the scaling invariance of the equation, in which the decay rates coincide with the global ones. Indeed, since
$$
\lim_{t\to\infty }t^{\frac{N}2(1-\frac1p)}\|\Gamma(\cdot,t)\|_{L^p(\{\nu\le |x|/t^{1/2}\le\mu\})}=C\neq0\quad\text{if }0<\nu<\mu<\infty,
$$
then $\|u(\cdot,t)\|_{L^p(\{\nu\le |x|/t^{1/2}\le\mu\})}\asymp t^{-\frac{N}2(1-\frac1p)}$. When properly scaled, solutions approach $M\Gamma$ in this characteristic scale.

The behavior in other scales depends on whether they are fast or slow, compared with the characteristic scale. In \emph{inner} (or slow) scales, $|x|=o(t^{1/2})$, which include both the case of compact sets~$K$ or \emph{intermediate} scales, growing slowly to infinity,
$$
|x|\asymp g(t)\quad \text{with }g(t)\to\infty,\quad g(t)=o(t^{1/2}),
$$
the global convergence result~\eqref{eq:convergence.local} implies that the (sharp) decay rate in $L^\infty$ coincides with that of the characteristic scale. Moreover,  solutions still resemble $M\Gamma$ for large times (though we only see the value of $\Gamma$ at $\xi=0$).  Results for other values of $p$ follow from the ones for $p=\infty$.

The situation for \emph{fast} scales,
$$
|x|\asymp g(t)\quad\text{with }
g(t)t^{-1/2}\to \infty,
$$
is very different
and much more involved. The global convergence result only says in this case that $\|u(\cdot,t)\|_{L^p(\{\nu\le |x|/g(t)\le\mu\})}=o(t^{-\frac{N}2(1-\frac1p)})$. Sharper results require some knowledge of the behavior of the initial datum at infinity, as the following example, due to Herraiz~\cite{Herraiz-1999}, shows.

\noindent\emph{Example. } Let $u_0\in L^1(\mathbb{R}^N)$ be such that  $|x|^\beta u_0(x)\to A$ as $|x|\to\infty$ for some constants  $A>0$ and $\beta>N$. Then,
$$
\begin{array}{lll}
u(x,t)=M  \Gamma(x,t)(1+o(1)) &\text{as }t\to\infty\text{ uniformly on }\{|x|\le \mu t^{\frac12}(\log t)^{\frac12}\},\ & 0<\mu<\sqrt{2(\beta-N)},\\[8pt]
|x|^\beta u(x,t)=A(1+o(1))\ &\text{as }t\to\infty\text{ uniformly on }\{|x|\ge \nu t^{\frac12}(\log t)^{\frac12}\},\ & \nu>\sqrt{2(\beta-N)}.
\end{array}
$$
We observe in this example an important difference between \emph{moderately fast} scales, for which the behavior is still given by $M\Gamma$, and \emph{very fast} scales,  in which the solution stays equal to the initial datum in first approximation. Notice that in moderately fast scales the decay rate in $L^\infty$-norm depends only on the scale, while  for very fast scales it also depends on the precise decay at infinity of the initial datum.  Which scales are on each side, moderately fast or very fast, also depends on the decay of $u_0$. In fact, the transition between one type of behavior to the other takes place in this example precisely at the scale for which $M\Gamma\asymp u_0$.

\subsection{The fundamental solution for the fractional problem. }
Let $\widehat{Z}=\widehat{Z}(\omega,t)$ denote the Fourier transform of the fundamental solution $Z$ of problem~\eqref{eq:main} in the $x$ variable. Then,
$$
\partial_t^\alpha \widehat{Z}(\omega,t)=-|\omega|^2\widehat{Z}(\omega,t),\qquad \widehat{Z}(\omega,0)=1.
$$
The solution to this ordinary fractional differential equation is
$$
\widehat Z(\omega,t)=\mathbb{E}_\alpha(-|\omega|^2 t^\alpha),
$$
where $\mathbb{E}_\alpha$ is the Mittag-Leffler function of order $\alpha$,
$$
\mathbb{E}_\alpha(s)=\sum_{k=0}^\infty\frac{s^k}{\Gamma(1+k\alpha)}.
$$
Inverting the Fourier transform, we finally obtain that $Z$ has a self-similar form,
$$
Z(x,t)=t^{-\alpha N/2}F(\xi),\quad \xi=x t^{-\alpha/2},
$$
with a radially symmetric positive profile $F$ that has an explicit expression in terms of certain  Fox's $H$-functions. Special functions in that family have been thoroughly studied, and the properties for the one appearing in the description of $F$ yield the following estimates for the profile,
\begin{eqnarray}
\label{eq:global.bounds.F}
	F(\xi)\le C
\begin{cases}
E_N(\xi),&N\ge3,\\
1+|E_2(\xi)|,&N=2,\\
1,&N=1,
\end{cases}\qquad &\xi\neq0,\\[8pt]
\label{eq:bounds.F.origin}
	F(\xi)\le C \exp(-\sigma|\xi|^{2/(2-\alpha)}),\qquad &|\xi|\ge 1,
\end{eqnarray}
for some positive constants $C$ and $\sigma$,
where $E_N$ is the multiple of the fundamental solution for the Laplacian in dimension $N$ given by
$$
E_N(\xi)=\begin{cases}
|\xi|^{2-N},&N\ge3,\\
-\ln |\xi|,&N=2.
\end{cases}
$$
The above estimates are sharp. Indeed, there are positive constants $\kappa=\kappa(N)$, $\hat\kappa=\hat\kappa(N)$ such that
\begin{eqnarray}
\label{eq:constant.Nge2}
F(\xi)/E_N(\xi)\to\kappa\quad&\text{as }|\xi|\to0,\quad& N\ge2,\\[8pt]
\label{eq:constant.infty}
\exp(\sigma|\xi|^{2/(2-\alpha)})	F(\xi)\to \hat\kappa\quad&\text{as }|\xi|\to\infty,\quad &N\ge 1.
\end{eqnarray}
In particular $F$ is continuous at the origin only for $N=1$. However, $F\in C^\infty$ outside the origin in all dimensions. There are also estimates for the gradient,
$$
\begin{array}{ll}
|DF(\xi)|\le C
\begin{cases}
|\xi|^{1-N},&N\ge2,\\
1,&N=1,
\end{cases}\qquad &\xi\neq0,\\[8pt]
|DF(\xi)|\le C \exp(-\sigma|\xi|^{2/(2-\alpha)}),\qquad &|\xi|\ge 1,
\end{array}
$$
and in fact for all subsequent derivatives.
These properties, which are due to Kochubei~\cite{Kochubei-1990} (see also~\cite{Kemppainen-Siljander-Zacher-2017}),
will be fundamental for our analysis.

\noindent\emph{Remarks. } (a) An estimate for the error in the limit~\eqref{eq:constant.Nge2} when $N\ge3$ is also available, namely, there is a constant~$C>0$ such that
\begin{equation}
\label{eq:estimate.error.behaviour.profile.N=3}
\big||\xi|^{N-2}F(\xi)-\kappa\big|\le C|\xi| \quad\text{if }N\ge3.
\end{equation}

\noindent (b) As a corollary of the estimates for the gradient and the Mean Value Theorem, for every $\nu>0$ there is a constant $C_\nu>0$ such that  if $|\xi-\tau\eta|\ge\nu>0$ for all $\tau\in(0,1)$, then
\begin{equation}
\label{eq:estimate.MVT}
|F(\xi-\eta)-F(\xi)|\le C_\nu \exp(-\sigma|\xi-s\eta|^{2/(2-\alpha)})|\eta|\quad \text{for some } s\in(0,1),
\end{equation}
an estimate that will be useful in future computations.

\medskip

As a first consequence of the estimates for $F$ we observe that
$Z(\cdot,t)\in L^p(\mathbb{R}^N)$  for $t>0$ if and only if $p$ is \emph{subcritical},
\begin{equation}
\label{eq:subcritical.range}
\tag{S}
p\in [1,\infty]\quad\text{if }N=1,\qquad p\in[1,p_c)\quad\text{if }N\ge 2, \quad \text{where }
p_c=\begin{cases}
N/(N-2)&\text{if }N\ge 3,\\
\infty&\text{if }N=2.
\end{cases}
\end{equation}

In the critical case $p=p_c$, $N\ge3$, $Z(\cdot,t)$ belongs to the Marcinkiewicz space
$M^{p_c}(\mathbb{R}^N)$. Marcinkiewicz spaces are $L^p$ weak spaces defined through the norm
\[
\|g\|_{M^p(\mathbb{R}^N)}=\sup\Big\{\lambda\big|\{x\in\R^n:|g(x)|>\lambda\}\big|^{1/p},\ \lambda>0\Big\}, \quad p\in [1,\infty).
\]
That is, $\|g\|_{M^p(\mathbb{R}^N)}$ is the smallest constant $C$ such
\[
\big|\{x\in\R^n:|g(x)|>\lambda\}\big|\le \Big(\frac C\lambda\Big)^p\quad \text{for all }\lambda>0.
\]
Thus, $M^p(\mathbb{R}^N)$ coincides with the Lorentz space $L^{p,\infty}(\mathbb{R}^N)$.

\subsection{Decay rates.  } If $p$ is subcritical, that is, if~\eqref{eq:subcritical.range} holds,   there is an $L^1$--$L^p$ smoothing effect: if the initial datum is in $L^1(\mathbb{R}^N)$, then $u(\cdot,t)\in L^1(\mathbb{R}^N)\cap L^p(\mathbb{R}^N)$ for every $t>0$. In that range we have the decay rate,~$\|u(\cdot,t)\|_{L^p(\mathbb{R}^N)}\asymp t^{-\frac{\alpha N}2(1-\frac1p)}$; see~\cite{Kemppainen-Siljander-Vergara-Zacher-2016}.

If $p>N/(N-2)$, $N\ge 3$, there is no smoothing effect, and in order to have $u(\cdot, t)\in L^p(\mathbb{R}^N)$ we have to require $u_0\in L^p(\mathbb{R}^N)$. In that case, if $p$ is moreover finite, Kemppainen \emph{et al} proved in~\cite{Kemppainen-Siljander-Vergara-Zacher-2016} that~$\|u(\cdot,t)\|_{L^p(\mathbb{R}^N)}\asymp t^{-\alpha}$.
We will complete the results in that paper showing that this decay rate is also valid for $p=\infty$, $N\ge 3$.

In the critical case $p=p_c$, $N\ge3$, there is a weak smoothing effect: solutions with an integrable initial datum will be for any positive time in~$L^1(\mathbb{R}^N)\cap M^{p_c}(\mathbb{R}^N)$, and we have the decay rate~$\|u(\cdot,t)\|_{M^{p_c}(\mathbb{R}^N)}\asymp t^{-\alpha}$; see once more~\cite{Kemppainen-Siljander-Vergara-Zacher-2016}. We complement this result here by proving that if $u_0\in L^1(\mathbb{R})\cap L^{p_c}(\mathbb{R}^N)$ then $\|u(\cdot,t)\|_{L^{p_c}(\mathbb{R}^N)}\asymp t^{-\alpha}$.

When $N=2$ the situation for the critical case, $p=\infty$, is somewhat different.  Indeed, assuming $u_0\in L^1(\mathbb{R}^2)\cap L^\infty(\mathbb{R}^2)$ we obtain in the present paper a rate  with a logarithmic correction, $\|u(\cdot,t)\|_{L^\infty(\mathbb{R}^2)}\asymp t^{-\alpha}\log t$.

Notice that,  in sharp contrast with the standard heat equation, when $\alpha\in(0,1)$ and $N\ge 3$ the decay rate increases with $p$ only up to the critical value $p=p_c$. Once this critical value is surpassed the decay rate remains unchanged, no matter the value of $p$, and is equal to the decay rate for the problem posed in a bounded domain with zero Dirichlet boundary condition, $\|u(\cdot,t)\|_{L^p(\Omega)}\asymp t^{-\alpha}$. This remarkable fact was already mentioned in~\cite{Kemppainen-Siljander-Vergara-Zacher-2016}, where it was  described as  a \emph{critical dimension phenomenon}: given $p\in(1,\infty)$ the decay rate increases with the dimension as long as $N<2p/(p-1)$, after which it stays equal to the one in bounded domains. For results on the decay rate for the problem in bounded domains see for instance~\cite{Dipierro-Valdinoci-Vespri-2019,Meerschaert-Nane-Vellaisamy-2009,Nakagawa-Sakamoto-Yamamoto-2010,Vergara-Zacher-2015}.

The critical phenomenon described above is better understood through the study of the decay rates in different space-time scales.  As we will see,  in the characteristic scale $|x|\asymp t^{\alpha/2}$, dictated by the scaling invariance of the equation, the decay rate is given by
$$
\|u(\cdot, t)\|_{L^p(\{\nu\le |x|/t^{\alpha/2}\le\mu \})}\asymp t^{-\frac{\alpha N}{2}(1-\frac1p)},\qquad 0<\nu<\mu<\infty,
$$
for all dimensions and all $p\in[1,\infty]$.
This could suggest a decay rate $\|u(\cdot,t)\|_{L^p(K)}\asymp t^{-\frac{\alpha N}2}$ in compact sets.  This is indeed the case when $N=1$. However, due to the memory effect introduced by the Caputo derivative, the decay rate in such sets is slower in higher dimensions,
$$
\|u(\cdot,t)\|_{L^p(K)}\asymp
\begin{cases}
t^{-\alpha},&N\ge3,\\
t^{-\alpha}\log t,&N=2.
\end{cases}
$$
In intermediate scales, $|x|\asymp g(t)$ with $g$ \emph{growing slowly to infinity},
\begin{equation}
\label{eq:intermediate.scales}
g(t)\to\infty,\quad g(t)=o(t^{\alpha/2}),
\end{equation}
we have an intermediate decay rate
$$
\|u(\cdot,t)\|_{L^p(\{\nu<|x|/g(t)<\mu\})}\asymp
\begin{cases}
t^{-\alpha}(g(t))^{2-N\big(1-\frac1p\big)},&N\ge3,\\
t^{-\alpha}(g(t))^{\frac2p}\log(g(t)t^{-\frac\alpha2}),&N=2.
\end{cases}
$$
In fast scales,
\begin{equation}
\label{eq:def.fast.scale}
|x|\asymp g(t)\quad\text{with }
g(t)t^{-\frac\alpha2}\to \infty,
\end{equation}
the decay rate is always faster than in the characteristic scale.
Thus,  the slowest decay rate takes place in the characteristic scale if $p$ is subcritical and in compact sets otherwise. If $p=p_c$, $N\ge 3$, solutions decay at the same rate in all scales starting from compact sets up to the characteristic scale.

\subsection{Asymptotic profiles}
 If~\eqref{eq:subcritical.range} holds, $Z(\cdot,t)\in L^p(\mathbb{R}^N)$ for all $t>0$, and one expects to have a convergence result analogous to~\eqref{eq:convergence.local}, namely,
\begin{equation}\label{eq:convergence.Lp}
t^{\frac{\alpha N}{2}(1-\frac1p)}\|u(\cdot,t)-M  Z(\cdot,t)\|_{L^p(\R^N)}\to0\quad\mbox{as }t\to\infty, \qquad M =\int_{\mathbb{R}^N}u_0.
\end{equation}
This was already proved in~\cite{Kemppainen-Siljander-Vergara-Zacher-2016} for  $p\in[1,N/(N-1))$ when $N\ge 2$ and $p\in[1,\infty)$ when $N=1$.  We fill up the gap, proving \eqref{eq:convergence.Lp} when $p=\infty$ and  $N=1$ and for all $p\in[1,p_c)$ if  $N\ge 2$ in Section~\ref{sect-global subcritical}.

Though the result is valid in the whole $\mathbb{R}^N$, in dimensions $N\ge2$ it is only sharp in the  characteristic scale $|x|\asymp t^{\alpha/2}$ (in dimension $N=1$ it is also sharp in inner scales, $|x|=o(t^{\alpha/2})$). The decay in other scales in this subcritical range is faster,  $o(t^{-\frac{\alpha N}2(1-\frac1p)})$. When properly rescaled to take into account the right rate of decay the solutions may approach a different asymptotic profile; see below.

The profile of the fundamental solution is smooth and belongs to all $L^p$  spaces in \emph{outer scales}, $|x|\asymp g(t)$ with $g(t)\ge\nu t^{\alpha/2}$, $\mu>0$.
Hence, the fundamental solution  may give the asymptotic behavior in such scales also when $p$ is not subcritical. We will prove that  indeed it does so, assuming certain (non-integrable) decay for the initial datum in addition to the integrability, namely,
$u_0\in \mathcal{D}_N$, where
$$
\mathcal{D}_\beta=\{f: \text{there exist } C,R>0 \text{ such that } |x|^\beta|f(x)|\le C\text{ for all } x\in \mathbb{R}^N, |x|>R\}.
$$
Thus, we have
\begin{equation}
	\label{eq:convergence.outer.regions}
	t^{\frac{\alpha N}2(1-\frac1p)}\|u(\cdot,t)-M  Z(\cdot,t)\|_{L^p(\{|x|>\nu t^{\alpha/2}\})}\to 0\quad\mbox{as }t\to\infty.
\end{equation}
As in the subcritical case, the result is only sharp in the characteristic scale. In faster scales it only says that the decay is $o(t^{-\frac{\alpha N}2(1-\frac1p)})$.

In intermediate scales, $|x|\asymp g(t)$, with $g$ satisfying~\eqref{eq:intermediate.scales},  the large-time behavior is still given by $M$ times the fundamental solution, under the additional assumption $u_0\in \mathcal{D}_N$ when $p$ is not subcritical. Indeed,
$$
\frac{\|u(\cdot,t)-M
Z(\cdot,t)\|_{L^p(\{\nu<|x|/g(t)<\mu\})}}{\|u(\cdot,t)\|_{L^p(\{\nu<|x|/g(t)<\mu\})}}\to0\quad\mbox{as }t\to\infty, \qquad0<\nu\le \mu<\infty.
$$
Note than in these scales $|\xi|=|x|t^{-\alpha/2}\to0$ as $t\to\infty$. Hence, for $t$ large
$$
Z(x,t)=t^{-\frac{\alpha N}2}F(\xi)\sim t^{-\frac{\alpha N}2}\kappa E_N(\xi)\sim\begin{cases}
t^{-\alpha}\kappa E_N(x), &N\ge3,\\
 t^{-\alpha}|\log(g(t)t^{-\alpha/2})|\kappa,&N=2,\\
t^{-\alpha/2}F(0),&N=1.
\end{cases}
$$
Therefore, in intermediate scales  we have, for $0<\nu\le \mu<\infty$,
\begin{equation}
\label{eq:behavior.intermediate.scales}
\begin{array}{ll}
\displaystyle(g(t))^{N\big(1-\frac 1p\big)-2}\|t^{\alpha} u(\cdot,t)-M\kappa E_N
\|_{L^p(\{\nu <|x|/g(t)<\mu\})}\to0, &N\ge3,\\[10pt]
\displaystyle g(t)^{-\frac 2p}\Big\|\frac{t^{\a}}{|\log(g(t)t^{-\alpha/2})|}u(\cdot,t)-M  \kappa\Big\|_{L^p(\{\nu <|x|/g(t)<\mu\})}\to0,\qquad&N=2,\\[10pt]
\displaystyle g(t)^{-\frac 1p}\|t^{\frac\a2}u(\cdot,t)-M  F(0)\|_{L^p(\{\nu <|x|/g(t)<\mu\})}\to0,&N=1,
\end{array}
\qquad\text{as }t\to\infty.
\end{equation}
Thus, the asymptotic profile is proportional to the fundamental solution for the Laplacian if $N\ge3$ and a constant for low dimensions. It is worth noticing that while the constant limit profile is the same one for all intermediate scales when $N=1$, it depends on the specific scale under consideration when $N=2$. For instance, if $g(t)=t^\gamma$, $\gamma\in (0,\alpha/2)$,
\begin{equation}
\label{eq:specific.intermediate.scale}
t^{-\frac {2\gamma}p}\Big\|\frac{t^{\a}}{\log t}u(\cdot,t)-M  \kappa\big(\frac\alpha2-\gamma\big)\Big\|_{L^p(\{\nu <|x|/g(t)<\mu\})}\to0\quad\mbox{as }t\to\infty.
\end{equation}
Another difference is that in the two-dimensional case the decay rate in $L^\infty$ is not the same for all intermediate scales. Thus, if $g(t)=t^{\alpha/2}/(\log t)^\theta$ we have
$$
\Big\|\frac{t^{\a}}{\log(\log t)}u(\cdot,t)-M  \kappa\theta\Big\|_{L^\infty(\{\nu <|x|/g(t)<\mu\})}\to0\quad\mbox{as }t\to\infty,
$$
which is to be compared with~\eqref{eq:specific.intermediate.scale}.

 In order to guess what may be the asymptotic behavior in compact sets, we start by considering the case of compactly supported initial data. In this situation, using the behavior of $F$ at the origin, we get for $N\ge3$ that locally
$$
u(x,t)=\int_{\mathbb{R}^N}u_0(x-y)t^{-\alpha N/2}F(yt^{-\alpha /2})\,dy\approx t^{-\alpha}\kappa \Phi(x),\qquad \Phi(x):=\int_{\mathbb{R}^N}\frac{u_0(x-y)}{|y|^{N-2}}\,dy.
$$
Thus, $t^\alpha u(\cdot,t)$ is expected to approach a multiple of the Newtonian potential of the initial datum in compact sets. Thus, the precise form of the initial datum plays a more important role in the asymptotic description than in the characteristic and intermediate scales, where it was only felt through the constant $M$. This remebrance of the initial datum is due to the  memory introduced by the nonlocal time derivative.

The result will not be restricted to compactly supported initial data. Indeed, if $u_0\in L^1(\mathbb{R}^N)$ if $p$ is subcritical, and $u_0\in  L^1(\mathbb{R}^N)\cap L_{\rm loc}^p(\mathbb{R}^N)$ otherwise, for   $N\ge3$ and any compact set $K$,
\begin{equation}
\label{eq:convergence.newtonian.potential}
\big\|t^\a u(\cdot,t)-\kappa\Phi\big\|_{L^p(K)}\to0\quad\mbox{as }t\to\infty.
\end{equation}
In the critical case $p=p_c$, in which solutions decay at the same rate in all scales starting from compact sets up to the characteristic scale, convergence can be extended to expanding balls $ B_{g(t)}$ with $g$ growing slowly to infinity,
\[
\|t^\a u(\cdot,t)-\kappa\Phi\|_{L^{p_c}(B_{g(t)})}\to0\quad\mbox{as }t\to\infty.
\]
An analogous result holds in the Marcinkiewicz space $M^{p_c}(B_{g(t)})$ assuming only $u_0\in   L^1(\R^N)$.

On the other hand, we will prove that
\begin{equation}
\label{eq:equivalence.Phi.EN.intermediate.scales}
(g(t))^{N\big(1-\frac 1p\big)-2}\|\Phi- ME_N\|_{L^p(\{\nu<|x|/g(t)<\mu\})}\to0\quad\mbox{as }t\to\infty, \qquad0<\nu\le \mu<\infty,
\end{equation}
for $N\ge3$, assuming $u_0\in L^1(\mathbb{R}^N)$ if $p$ is subcritical and $u_0\in L^1(\mathbb{R}^N)\cap\mathcal{D}_N$ otherwise, which combined with the behavior in intermediate escales~\eqref{eq:behavior.intermediate.scales} yields
$$
\displaystyle(g(t))^{N\big(1-\frac 1p\big)-2}\|t^{\alpha} u(\cdot,t)-\kappa \Phi
\|_{L^p(\{\nu <|x|/g(t)<\mu\})}\to0\quad\text{as }t\to\infty.
$$
Thus, the behavior in intermediate scales can be described both in terms of $MZ$ and $\kappa\Phi$, which shows the overlapping of both developments there.

As in intermediate scales, in low dimensions the asymptotic profile in compact sets  is a constant,
$$
\begin{array}{ll}
\displaystyle
\Big\|\frac{t^\a}{\log t}u(\cdot,t)-\frac{M \kappa\a}2\Big\|_{L^p(K)}\to0,\quad&N=2,\\[10pt]
\displaystyle\big\|t^{\frac\a2}u(\cdot,t)-M  F(0)\big\|_{L^p(K)}\to0, \quad&N=1,\\
\end{array}
\quad\mbox{as } t\to\infty,
$$
with the additional assumption $u_0\in L^1(\mathbb{R}^2)\cap L^q_{\rm loc}(\mathbb{R}^2)$ for some $q\in(1,\infty]$ when $N=2$ and $p=\infty$.
There is, however, a big difference between these two dimensions. While for $N=1$ the limit constant is the same for compact sets as for all intermediate scales, for $N=2$ the constant for compact sets only coincides with the one for intermediate scales if they  satisfy $\log g(t)/\log t\to 0$. These are precisely the scales for which the decay rate coincides with the one in compact sets. Thus, the overlapping for $N=1$ is much wider than for $N=2$. See Theorems~\ref{teo-compactos N=2 todo p} and~\ref{thm:compact.N=1} for more information on this subject.

\subsection{Fast scales. } We finally turn our attention to  \emph{fast scales}, satisfying~\eqref{eq:def.fast.scale}, for which the situation is more involved. As we will see, there is a difference between \emph{moderately fast} scales and \emph{very fast} scales. If the scales are moderately fast the asymptotic behavior of the solution is still given by $MZ$, with a decay rate that depends only on the scale. For very fast scales the behavior of the initial datum at infinity becomes more important, and may even determine the rate of decay and the final profile. Which scales are on each side, moderately fast or very fast, depends strongly on the decay of the initial datum at infinity.

We start by considering compactly supported initial data. We obtain convergence in uniform  relative error to $MZ$ in sets of the form $\{
\nu t^{\a/2}\le |x|\le\mu t(\log t)^{\frac{2-\alpha}\alpha}\}$
for all $\nu>0$ and for all $\mu\in (0,\mu^*)$, where $\mu^*$ depends on the support of $u_0$. Thus, in this case fast scales $g$ such that $g(t)<\mu^* t(\log t)^{\frac{2-\alpha}\alpha}$ are identified as moderately fast scales.

If the initial datum does not have compact support, but  has some integrable decay, namely  $u_0\in\mathcal{D}_\beta$ for some $\beta>N$,
we prove convergence in uniform relative error to $MZ$ in sets of the form $\{\nu t^{\a/2}\le |x|\le \mu t^{\a/2}(\log t)^{(2-\a)/2}\}$ for all $\nu>0$ and all $\mu\in (0,\mu_{\beta})$, where $\mu_{\beta}=\left(\frac\a{2\sigma}(\beta-N)\right)^{\frac{2-\alpha}2}$. The  upper restriction here is not technical. Indeed, if there exists $A>0$ such that $|x|^\beta u_0(x)\to A$ as $|x|\to\infty$, then
$$
u(x,t)=\frac A{|x|^\beta}(1+o(1))\quad\mbox{uniformly in }  |x|\ge  \nu t^{\a/2}(\log t)^{(2-\a)/2}.
$$
for every $\nu>\mu_\beta$. Notice that in this case moderately fast scales do not go so far as in the case  of compactly supported initial data.

The study of the behavior in fast scales is an interesting subject which is not complete even for the local heat equation.

\subsection{Final remarks}
We end this Introduction with some remarks and comments on the problem.

\noindent\emph{On the proofs. } Though the equation has a scaling invariance, it does not have a smoothing effect. Hence, the classical method to study large time behaviors that combines scaling with compactness cannot be used here.  Instead, we perform a careful analysis of the solution through the representation formula.
	
\noindent\emph{Solutions with sign changes. } Since the problem is linear,  the results in the present paper are valid for sign-changing solutions. However, if $\int_{\mathbb{R}^N}u_0=0$ they are not optimal: faster decay rates and different asymptotic profiles are expected in intermediate, characteristic, and moderately fast scales for all dimensions, and in compact sets in low dimensions, $N=1,2$. However, since the Newtonian potential of $u_0$ for $N\ge3$ is non-trivial if $u_0$ is not identically zero, the results in compact sets are optimal in high dimensions also in this case.

\noindent\emph{Global results. } Even though one may get global results if $p$ is not critical, they only give information either in the characteristic scale or in compact sets. That is the reason why in this paper we follow a different approach, considering asymptotic limits in different scales, instead of trying to obtain global profiles.

\noindent\emph{Comparison with the local case. } As we have already mentioned, we observe several important qualitative differences with the local case. The main one regarding the rates is that for the nonlocal case the $L^\infty$ decay rate changes when one passes from the characteristic scale to inner scales, becoming smaller, due to the memory. As for the asymptotic profiles, the difference is even more striking: in compact sets it is not given by a multiple of the fundamental solution anymore, but by the Newtonian potential of the initial datum, again as a byproduct of the memory effects introduced by the nonlocal time derivative.
	
\noindent\emph{Precedents. } The possibility of having $L^\infty$ decay rates in inner sets different from the one in the characteristic scale had already been observed in problems posed in domains with holes in low dimensions for different diffusion operators, including the local heat equation~\cite{Herraiz-1999}, the porous medium equation~\cite{Cortazar-Quiros-Wolanski-2018}, or certain nonlocal (in space) heat equations~\cite{Cortazar-Elgueta-Quiros-Wolanski-2016,Cortazar-Elgueta-Quiros-Wolanski-2016-b}. The decay is faster in low dimensions in inner scales due to the presence of the holes.  The nonlocal Cauchy problem~\eqref{eq:main} is the first example of this phenomenon for an equation  posed in the whole space that  we are aware of.  In this case the decay in inner sets is slower due to the memory, and takes place in high dimensions (and also in the critical dimension $N=2$). This is also, up to our knowledge, the first example of a diffusion problem in which the limit profile is given by the Newtonian potential.

\noindent\emph{Fully nonlocal problems. } In the forthcoming article~\cite{Cortazar-Quiros-Wolanski-2020-Preprint-1} the authors extend the results in the present paper to the fully nonlocal problem
$$
\partial_t^\alpha u+(-\Delta)^s u=0 \quad\text{in } \R^N\times\R_+,\qquad
u(\cdot,0)=u_0\quad\text{in } \R^N,
$$
with $s\in(0,1)$, an equation that has received recently much attention; see for instance~\cite{Allen-Caffarelli-Vasseur-2016}. The analysis is based in the estimates for the fundamental solution of the problem provided by the significant paper~\cite{Kemppainen-Siljander-Zacher-2017}, where some partial results on the large time behavior were also obtained.

\noindent\emph{The non-homogeneus problem. } Another interesting issue is the large-time behavior of solutions to the non-homogeneous problem
$$
\partial_t^\alpha u-\Delta u=f \quad\text{in } \R^N\times\R_+,\qquad
u(\cdot,0)=0\quad\text{in } \R^N,
$$
for certain choices of the right-hand-side function $f$. Some partial results are available in~\cite{Kemppainen-Siljander-Zacher-2017}. A more complete study of this subject is performed in the upcoming paper~\cite{Cortazar-Quiros-Wolanski-2020-Preprint-2}.

\noindent\emph{On the decay condition. } It can be checked without much effort that the  condition $u_0\in\mathcal{D}_N$ that appears in several theorems when $p$ is not subcritical can be substituted by the weaker integral condition $u_0\in\mathcal{D}_{N,p}$, where
$$
\mathcal{D}_{N,p}=\{f: \text{there exist } C,R>0 \text{ such that } r^{N\big(1-\frac1p\big)}\|f\|_{L^p(\{|x|>r\})}\le C\text{ for all }r\ge R\}.
$$
Note that when $p=\infty$ both classes coincide, $\mathcal{D}_N=\mathcal{D}_{N,\infty}$.

\noindent\emph{Organization of the paper. } We devote Section~\ref{sect-global subcritical} to prove that $MZ$ gives the global asymptotic behavior for all subcritical values of $p$. The next sections are devoted to obtain rates and profiles in different scales, classified depending on their speed. Outer scales (which include the characteristic scale) are considered in Section~\ref{sect-diffusive scale}, Intermediate scales in Section~\ref{sect-slow scales}, and compact sets in Section~\ref{sect-compact sets}. Fast scales, for which only partial results are available, even in the local case, are considered in Section~\ref{sect-fast scales}.

\section{Global results in the subcritical range}
\label{sect-global subcritical} \setcounter{equation}{0}

In this section we consider the large time behavior in $L^p(\mathbb{R}^N)$ when $p$ is in the subcritical range.
We will prove that the behavior is given by $M Z$, thus extending to the whole subcritical range the results from~\cite{Kemppainen-Siljander-Vergara-Zacher-2016} for $p\in[1,N/(N-1))$ when $N\ge 2$ and for $p\in[1,\infty)$ when $N=1$. The result is sharp in the characteristic scale for $N\ge 2$ and in all inner scales for $N=1$.
\begin{teo}\label{L^p} Let $u_0\in L^1(\R^N)$. If $p$ is subcritical, then
	\begin{equation}\label{lp}
	t^{\frac{\alpha N}{2}(1-\frac1p)}\big\|u(\cdot,t)-M Z(\cdot,t)\big\|_{L^p(\R^N)}\to0\quad\mbox{as }t\to\infty.
	\end{equation}
	
\end{teo}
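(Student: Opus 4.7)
The plan is to represent the difference $u - MZ$ as a single integral, bring the $L^p$ norm inside via Minkowski's integral inequality, and then use the subcritical integrability of the self-similar profile $F$ together with continuity of translation in $L^p$.

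\textbf{Step 1: Integral representation.} Since $u(\cdot,t) = Z(\cdot,t) * u_0$ and $M=\int u_0$,
\[
u(x,t) - M Z(x,t) = \int_{\R^N}\bigl[Z(x-y,t)-Z(x,t)\bigr]u_0(y)\,dy.
\]
Minkowski's integral inequality then gives
\[
\|u(\cdot,t)-MZ(\cdot,t)\|_{L^p(\R^N)} \le \int_{\R^N}|u_0(y)|\,\|Z(\cdot-y,t)-Z(\cdot,t)\|_{L^p(\R^N)}\,dy.
\]

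\textbf{Step 2: Self-similar rescaling.} Using $Z(x,t)=t^{-\alpha N/2}F(xt^{-\alpha/2})$ and the change of variables $z=xt^{-\alpha/2}$,
\[
\|Z(\cdot-y,t)-Z(\cdot,t)\|_{L^p(\R^N)} = t^{-\frac{\alpha N}{2}(1-\frac1p)}\,\|F(\cdot - yt^{-\alpha/2})-F\|_{L^p(\R^N)}.
\]
Combining these two steps yields the master estimate
\[
t^{\frac{\alpha N}{2}(1-\frac1p)}\|u(\cdot,t)-MZ(\cdot,t)\|_{L^p(\R^N)} \le \int_{\R^N}|u_0(y)|\,\|F(\cdot-yt^{-\alpha/2})-F\|_{L^p(\R^N)}\,dy.
\]

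\textbf{Step 3: Pass to the limit by dominated convergence.} Subcriticality of $p$ (together with the bounds \eqref{eq:global.bounds.F}--\eqref{eq:bounds.F.origin}) gives $F\in L^p(\R^N)$: near the origin $F$ is controlled by $E_N$, which is in $L^p_{\rm loc}$ exactly when $p<p_c$, while the exponential tail bound \eqref{eq:bounds.F.origin} handles infinity. Hence translations are continuous in $L^p$ for every subcritical $p<\infty$. In the remaining case $N=1,\,p=\infty$, the profile $F$ is continuous, bounded, and decays exponentially, hence uniformly continuous, so translations are continuous in the sup norm as well. For each fixed $y$, $yt^{-\alpha/2}\to 0$ as $t\to\infty$, so the integrand converges pointwise to zero. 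The bound $\|F(\cdot-yt^{-\alpha/2})-F\|_{L^p(\R^N)} \le 2\|F\|_{L^p(\R^N)}$ and the assumption $u_0\in L^1(\R^N)$ provide an integrable dominant; dominated convergence delivers \eqref{lp}.

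\textbf{Main obstacle.} The proof is structurally short; the one subtle point is that Young's inequality (which underlies the previous result of Kemppainen--Siljander--Vergara--Zacher) forces $p<N/(N-1)$. To reach the full subcritical range up to $p_c$, one must avoid estimating $Z$ and the translated $Z$ separately and instead exploit the cancellation in the difference $Z(\cdot-y,t)-Z(\cdot,t)$, which is precisely what the $L^p$-continuity of translations (valid for every $F\in L^p$) achieves. The only verification required is thus that $F\in L^p(\R^N)$ in the full subcritical range, which is contained in the profile estimates recalled in the Introduction.
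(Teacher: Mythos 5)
Your proposal is correct and follows essentially the same route as the paper: the identity $u-MZ=\int [Z(\cdot-y,t)-Z(\cdot,t)]u_0(y)\,dy$, Minkowski's inequality, the self-similar rescaling, and then continuity of translations of $F$ in $L^p$ (uniform continuity of $F$ in the case $N=1$, $p=\infty$). The only cosmetic difference is that you conclude by dominated convergence where the paper unpacks the same limit by splitting the $y$-integral at $|y|\asymp\delta t^{\alpha/2}$ with an $\varepsilon$--$\delta$ argument.
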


\begin{proof}
The starting point is the identity
\begin{equation}
\label{eq:starting.point}
t^{\frac{\alpha N}2}\big(u(x,t)-M  Z(x,t)\big)
=\int u_0(y)\big(F((x-y)t^{-\a/2})-F(xt^{-\a/2})\big)\,dy.
\end{equation}
Hence, if $p$ is subcritical and finite, performing the change of variables $\xi=xt^{-\alpha/2}$,
\[
\begin{aligned}
t^{\frac{\alpha N}{2}(1-\frac1p)}\|u(\cdot,t)&-M  Z(\cdot,t)\|_{L^p(\R^N)}\le \int u_0(y)\left(\int\big|F(\xi-yt^{-\a/2})-F(\xi)\big|^p\,d\xi\right)^{1/p}\,dy
\\
&={\rm I}+{\rm II},\text{where }\\	
{\rm I}&=\int_{|y|<\delta t^{\a/2}} u_0(y)\left(\int\big|F(\xi-yt^{-\a/2})-F(\xi)\big|^p\,d\xi\right)^{1/p}\,dy,\\
	{\rm II}&=\int_{|y|>\delta t^{\a/2}} u_0(y)\left(\int\big|F(\xi-yt^{-\a/2})-F(\xi)\big|^p\,d\xi\right)^{1/p}\,dy.
	\end{aligned}
	\]
	Now, since $p$ is finite, given $\ep>0$  we can take $\delta>0$ small enough so that
	\[
	\Big(\int\big|F(\xi-yt^{-\a/2})-F(\xi)\big|^p\,d\xi\Big)^{1/p}<\ep\quad\mbox{if }|y|t^{-\a/2}<\delta.
	\]
	With this choice of $\delta$ we have  ${\rm I}\le M\ep$.
	
	On the other hand, if $t_0$ is large enough,
	\[
	{\rm II}\le 2\|F\|_{L^p(\R^N)}\int_{|y|>\delta t^{\a/2}}u_0(y)\,dy\le2\|F\|_{L^p(\R^N)}\,\ep\quad\mbox{if } t\ge t_0,
	\]
	and the theorem is proved for $p$ subcritical and finite.
	
 If $N=1$  and $p=\infty$, then  $t^{\frac{\alpha }2}\big|u(x,t)-M  Z(x,t)\big|\le {\rm I}+{\rm II}$, where
	\[
	\begin{aligned}
	{\rm I}&=
	\int_{|y|<\delta(t)t^{\a/2}}u_0(y)|F((x-y)t^{-\a/2})-F(xt^{-\a/2})|\,dy,
	\\
	{\rm II}&=\int_{|y|>\delta(t)t^{\a/2}}u_0(y)|F((x-y)t^{-\a/2})-F(xt^{-\a/2})|\,dy.
	\end{aligned}
	\]
	Since in this case $F$ is uniformly Lipschitz continuous and bounded, then
	\[
	\begin{aligned}
	{\rm I}&\le \|DF\|_{L^\infty(\mathbb{R})}\int_{|y|<\delta(t)t^{\a/2}}u_0(y)|y|t^{-\a/2}\,dy\le  \|DF\|_{L^\infty(\mathbb{R})}
	\|u_0\|_{L^1(\mathbb{R})}\delta(t),\\
	{\rm II}&\le 2\|F\|_{L^\infty(\mathbb{R})}\int_{|y|>\delta(t)t^{\a/2}}u_0(y)\,dy,
	\end{aligned}
	\]
	and the result follows by choosing $\delta(t)\to0$ with $\delta(t)t^{\a/2}\to\infty$ as $t\to\infty$.
\end{proof}

\section{Outer scales}
\label{sect-diffusive scale} \setcounter{equation}{0}

We prove now that the fundamental solution  gives the asymptotic behavior in outer sets also when $p$ is not subcritical, assuming some decay for the initial datum. This gives the sharp rate of decay in the characteristic scale.
\begin{teo}\label{teo-supercritical exterior} Let $N\ge 2$ and $p\in [p_c,\infty]$. Assume $u_0\in L^1(\R^N)\cap \mathcal{D}_N$. Then, for any $\nu>0$,
\[
t^{\frac{\alpha N}2(1-\frac1p)}\|u(\cdot,t)-M  Z(\cdot,t)\|_{L^p(\{|x|>\nu t^{\alpha/2}\})}\to 0\quad\mbox{as }t\to\infty.
\]
\end{teo}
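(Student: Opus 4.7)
The plan is to adapt the proof of Theorem~\ref{L^p} to the restricted region $\{|x|>\nu t^{\a/2}\}$. After the change of variables $\xi=xt^{-\a/2}$, the identity~\eqref{eq:starting.point} shows that the claim is equivalent to $\|h(\cdot,t)\|_{L^p(\{|\xi|>\nu\})}\to 0$, where
\[
h(\xi,t)=\int u_0(y)\bigl(F(\xi-yt^{-\a/2})-F(\xi)\bigr)\,dy.
\]
I would then split $h=h_{\rm I}+h_{\rm II}$ according to whether $|y|<\delta t^{\a/2}$ or $|y|\ge\delta t^{\a/2}$, for a small parameter $\delta<\nu/2$ to be chosen at the end. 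The novelty compared with Theorem~\ref{L^p} is that $F$ is no longer globally in $L^p$; but since we work on $\{|\xi|>\nu\}$, the failure of integrability of $F$ at the origin can only enter through the shifted copies $F(\xi-yt^{-\a/2})$, which is exactly where the decay assumption $u_0\in\mathcal{D}_N$ will play its role.

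For $h_{\rm I}$, the condition $\delta<\nu/2$ ensures that the segment from $\xi$ to $\xi-yt^{-\a/2}$ stays in $\{|\cdot|\ge\nu/2\}$, so~\eqref{eq:estimate.MVT} together with $|\xi-syt^{-\a/2}|\ge|\xi|/2$ produces the pointwise bound
\[
|h_{\rm I}(\xi,t)|\le C\delta\,e^{-\sigma'|\xi|^{2/(2-\a)}}\|u_0\|_{L^1(\R^N)},
\]
whose $L^p(\{|\xi|>\nu\})$-norm is at most $C\delta$, uniformly in $t$. For $h_{\rm II}$ I would split the integrand as $|F(\xi-yt^{-\a/2})-F(\xi)|\le|F(\xi-yt^{-\a/2})|+|F(\xi)|$. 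The $|F(\xi)|$-contribution equals $|F(\xi)|\int_{|y|\ge\delta t^{\a/2}}u_0$, whose $L^p(\{|\xi|>\nu\})$-norm vanishes as $t\to\infty$ because $F\in L^p(\{|\xi|>\nu\})$ (by~\eqref{eq:global.bounds.F}--\eqref{eq:bounds.F.origin}) and $u_0\in L^1(\R^N)$.

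The remaining, more delicate term $J(\xi,t):=\int_{|y|\ge\delta t^{\a/2}}u_0(y)|F(\xi-yt^{-\a/2})|\,dy$ I would split once more according to whether $|\xi-yt^{-\a/2}|\ge\theta$ or $<\theta$, with $\theta<\nu/2$ a second small parameter. For the far contribution, $|F|\chi_{|\cdot|\ge\theta}\in L^1\cap L^\infty\subset L^p$, so after the change of variable $z=yt^{-\a/2}$ the far part becomes the convolution of $w_t(z):=t^{\a N/2}u_0(zt^{\a/2})$ restricted to $\{|z|\ge\delta\}$ with $|F|\chi_{|\cdot|\ge\theta}$; Young's inequality then bounds its $L^p$-norm by
\[
\Bigl(\int_{|y|\ge\delta t^{\a/2}}u_0\Bigr)\bigl\||F|\chi_{|\cdot|\ge\theta}\bigr\|_{L^p(\R^N)},
\]
which tends to zero as $t\to\infty$.

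The near part, where $|\xi-yt^{-\a/2}|<\theta$, is where I expect the real difficulty: this is precisely the region where the $L^p$-singularity of $F$ at the origin makes itself felt, and where the hypothesis $u_0\in\mathcal{D}_N$ becomes essential. The combined constraints $|\xi|>\nu$ and $|\xi-yt^{-\a/2}|<\theta<\nu/2$ force $|y|\ge(|\xi|/2)t^{\a/2}\ge(\nu/2)t^{\a/2}$, which exceeds the threshold $R$ in the definition of $\mathcal{D}_N$ for large $t$, so $u_0(y)\le C|y|^{-N}\le Ct^{-\a N/2}(|\xi|/2)^{-N}$. Changing variables $\zeta=\xi-yt^{-\a/2}$ then yields the pointwise bound
\[
\Bigl|\int_{|\xi-yt^{-\a/2}|<\theta}u_0(y)|F(\xi-yt^{-\a/2})|\,dy\Bigr|\le C|\xi|^{-N}G(\theta),\qquad G(\theta):=\int_{|\zeta|<\theta}|F(\zeta)|\,d\zeta,
\]
with $G(\theta)\to 0$ as $\theta\to 0$ thanks to the local integrability of $F$ for any $N\ge 2$. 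Since $|\xi|^{-N}\in L^p(\{|\xi|>\nu\})$ for every $p\in[p_c,\infty]$ (as $p_c>1$ when $N\ge 3$, and $p=p_c=\infty$ when $N=2$), this contributes at most $CG(\theta)$. Collecting the four pieces, $\limsup_{t\to\infty}\|h(\cdot,t)\|_{L^p(\{|\xi|>\nu\})}\le C\delta+CG(\theta)$, and sending $\delta,\theta\to 0$ completes the proof.
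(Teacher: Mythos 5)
Your proof is correct, and it takes a genuinely different route from the paper's. The paper also starts from \eqref{eq:starting.point}, but its decomposition is different: it splits according to $|y|<\delta|x|$ versus $|y|>\delta|x|$ and then, for the shifted-profile term, according to whether $|\xi-\eta|$ is smaller or larger than $\gamma|\xi|^{\ep}$, with the coupled choice $\gamma=\delta^{(N+1)/2}$; the small-$|\xi-\eta|$ piece is controlled via the pointwise bound $F(\zeta)\le C|\zeta|^{2-N}$ together with $u_0\in\mathcal{D}_N$ (this is where $p\ge p_c$ enters, through the integrability of $|\xi|^{-(N-2\ep)p}$ on $\{|\xi|>\nu\}$), the complementary piece exploits the exponential decay of $F$ away from that $|\xi|$-dependent neighborhood, the case $p=\infty$, $N\ge3$ is obtained by letting $p\to\infty$, and $N=2$, $p=\infty$ is handled by a separate argument using $F(\xi)\le C|\log|\xi||\le C|\xi|^{-1/2}$. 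You instead split near/far with respect to a fixed threshold $\theta$, dispose of the far part by Young's inequality ($L^1*L^p$, after identifying it as a convolution with $|F|\chi_{\{|\cdot|\ge\theta\}}$), and control the near part using only the local integrability of $F$ (through $G(\theta)$) combined with the decay $u_0(y)\le C|y|^{-N}$ forced by $|\xi|>\nu$ and $\theta<\nu/2$; here $p\ge p_c$ enters only through $|\xi|^{-N}\in L^p(\{|\xi|>\nu\})$, and in fact any $p>1$ would do. Your version is cleaner and uniform in $N\ge 2$ and $p\in[p_c,\infty]$, including the endpoint $N=2$, $p=\infty$, at the harmless cost of yielding no quantitative rate (the paper's sharper exponential bounds could be pushed to give one); the order of limits, first $t\to\infty$ at fixed $\delta,\theta$ and then $\delta,\theta\to0$, is sound, so there is no gap.
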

\begin{proof}
Starting from~\eqref{eq:starting.point} we get, for $N\ge 3$, $p\in [p_c,\infty)$,
\[\begin{aligned}
t^{\frac{\alpha N}2(1-\frac1p)}\big\|u(\cdot,t)&-M  Z(\cdot,t)\big\|_{L^p(\{|x|>\nu t^{\alpha/2}\})}
\le {\rm I}+{\rm II}+{\rm III},\quad\text{where }\\
{\rm I}&=t^{-\frac{\alpha N}{2p}}\int u_0(y)\Big(\int_{\stackrel{|x|> \nu t^{\alpha/2}}{|y|<\delta|x|}}\big|F((x-y)t^{-\a/2})-F(xt^{-\a/2})\big|^p\,dx\Big)^{1/p}\,dy,\\
{\rm II}&=t^{-\frac{\alpha N}{2p}}
\left(\int_{|x|> \nu t^{\alpha/2}}\Big(\int_{|y|>\delta|x|} u_0(y)F(xt^{-\a/2})\,dy\Big)^p\,dx\right)^{1/p},\\
{\rm III}&=t^{-\frac{\alpha N}{2p}} \left(\int_{|x|> \nu t^{\alpha/2}}\Big(\int_{|y|>\delta|x|} u_0(y)F((x-y)t^{-\a/2})\,dy\Big)^p\,dx\right)^{1/p}.
\end{aligned}
\]
Performing the change of variables $\xi=x t^{-\alpha/2}$ we get
$$
{\rm I}=
\int u_0(y)\Big(\int_{\stackrel{|\xi|>\nu}{|y|t^{-\alpha/2}<\delta|\xi|}}\big|F(\xi-yt^{-\alpha/2})-F(\xi)\big|^p\,d\xi\Big)^{1/p}\,dy.
$$
Let $\delta\in(0,1/2)$. If  $|\xi|\ge \nu $ and $|y|t^{-\alpha/2}\le\delta|\xi|$, then  $|\xi-syt^{-\a/2}|\ge |\xi|/2\ge\nu/2$ for all $s\in[0,1]$. Therefore, using~\eqref{eq:estimate.MVT},
\[
{\rm I}\le C M\delta\Big(\int_{|\xi|>\nu}\big(\textrm{e}^{-\bar\sigma|\xi|^{2/(2-\a)}}|\xi|)^p\,d\xi\Big)^{1/p}\le C \delta,\quad \text{where }\bar\sigma=\sigma 2^{-2/(2-\alpha)}.
\]

On the other hand,
\[
{\rm II}\le  \left(\int_{|\xi|>\nu}|F(\xi)|^p\,d\xi\right)^{1/p}\int_{|y|>\delta \nu t^{\a/2}}u_0(y)\,dy =C\int_{|y|>\delta\nu t^{\a/2}}u_0(y)\,dy.
\]

As for ${\rm III}$, we take $\ep\in(0,1)$ and $\gamma> 0$ to be chosen later and make also the change of variables $\eta=yt^{-\alpha/2}$. Since $u_0\in\mathcal{D}_N$, using the estimates~\eqref{eq:global.bounds.F}--\eqref{eq:bounds.F.origin} for the profile $F$ we get
\[\begin{aligned}
{\rm III}
=
&\left(\int_{|\xi|> \nu}\Big(t^{\frac{\alpha N}{2}}\int_{|\eta|>\delta|\xi|} u_0(\eta t^{\a/2})F(\xi-\eta)\,d\eta\Big)^p\,d\xi\right)^{1/p}
\le {\rm III}_{1}+{\rm III}_{2},\quad\text{where }\\
&{\rm III}_{1}= C\left(\int_{|\xi|\ge \nu}\Big(\int_{\stackrel{|\eta|>\delta|\xi|}{|\xi-\eta|<\gamma|\xi|^\ep}}|\xi-\eta|^{2-N}|\eta|^{-N}\,d\eta\Big)^p\,d\xi\right)^{1/p},\\
&{\rm III}_{2}= C\left(\int_{|\xi|\ge \nu}\Big( t^{\frac{\a N}2}\int_{\stackrel{|\eta|>\delta|\xi|}{|\xi-\eta|>\gamma|\xi|^\ep}}u_0(t^{\a/2}\eta)e^{-\sigma|\xi-\eta|^{\frac2{2-\a}}}\,d\eta\Big)^p\,d\xi\right)^{1/p}.
\end{aligned}\]
Since $p\ge \frac N{N-2}$, if we choose  $\gamma=\delta^{\frac{N+1}2}$ we have
\[\begin{aligned}
{\rm III}_{1}&\le C \delta^{-N}\left(\int_{|\xi|\ge \nu}|
\xi|^{-Np}\Big(\int_{|\xi-\eta|<\gamma|\xi|^\ep}|\xi-\eta|^{2-N}\,d\eta\Big)^p\,d\xi\right)^{1/p}\\
&\le C \delta^{-N}\gamma^2\left(\int_{|\xi|\ge \nu}|\xi|^{-p(N-2\ep)}\,d\xi\right)^{1/p}\le C \delta.
\end{aligned}\]

On the other hand,
\[\begin{aligned}
{\rm III}_{2}&\le C\left(\int_{|\xi|\ge \nu}\textrm{e}^{-p\sigma\gamma^{\frac2{2-\a}}
|\xi|^{\frac{2\ep}{2-\a}}}\Big(t^{\frac{N\a}2}\int_{|\eta|>\delta|\xi|}u_0(\eta
t^{\a/2})\,d\eta\Big)^p\,d\xi\right)^{1/p}\\
&\le C\left(\int_{|\xi|\ge \nu}\textrm{e}^{-p\sigma\gamma^{\frac2{2-\a}}
	|\xi|^{\frac{2\ep}{2-\a}}}\,d\xi\right)^{1/p}\int_{|y|>\delta\nu t^{\a/2}}u_0(y)\,dy\to 0\quad \text{as }t\to\infty,
\end{aligned}
\]
which completes the proof for $N\ge3$ and $p$ finite.

The cases $N\ge 3$, $p=\infty$ follow by letting $p\to\infty$ in the estimates for finite $p$.

In order to deal with the case $N=2$, $p=\infty$, starting from~\eqref{eq:starting.point} we get
\[
\begin{aligned}
 t^{\alpha}\big|u(x,t)&-M  Z(x,t)\big|\le {\rm I}+{\rm II}+{\rm III},\quad\text{where }\\
{\rm I}&=
\int_{|y|<\delta t^{\a/2}}u_0(y)|F((x-y)t^{-\a/2})-F(xt^{-\a/2})|\,dy,
\\
{\rm II}&=\int_{|y|>\delta t^{\a/2}}u_0(y)F(xt^{-\a/2})\,dy,
\\
{\rm III}&=\int_{|y|>\delta t^{\a/2}}u_0(y)F((x-y)t^{-\a/2})\,dy.
\end{aligned}
\]

If  $|x|\ge \nu t^{\alpha/2}$ and $|y|\le\delta t^{\alpha/2}$, with $\delta<\nu/2$ to be chosen, then $|xt^{-\alpha/2}-syt^{-\alpha/2}|\ge \nu/2$ for all  $s\in[0,1]$.
Therefore, since $F$ is uniformly Lipschitz continuous in $\mathbb{R}^2\setminus B_{\nu/2}$,
\[
|F((x-y)t^{-\a/2})-F(xt^{-\a/2})|\le|DF(xt^{-\alpha/2}-syt^{-\alpha/2})| \,|y|t^{-\alpha/2}
\le \| DF\|_{L^\infty(\mathbb{R}^2 \setminus B_{\nu/2})}\delta <\ep
\]
if $\delta<\min\{\nu/2, \ep/\| DF\|_{L^\infty(\mathbb{R}^2 \setminus B_{\nu/2})}\}$, so that $\textrm{I}\le M\ep$.

We now bound $\textrm{II}$. As $|x|t^{-\alpha/2}\ge \nu$ and $F$ is bounded outside $B_\nu$,
\[
\textrm{II}\le \displaystyle
\displaystyle \|F\|_{L^\infty(\mathbb{R}^2\setminus B_\nu)}
\int_{|y|>\delta t^{\a/2}}u_0(y)\,dy <\ep
\quad\text{for }t\ge t_0(\ep,\delta,\nu).
\]

Now we turn to $\textrm{III}$. We have $\textrm{III}=\textrm{III}_{1}+\textrm{III}_{2}$, where
\[
\begin{aligned}
 \textrm{III}_{1}&=\int_{\stackrel{|y|>\delta t^{\alpha/2}}{|x-y|<\gamma t^{\alpha/2}}}|y|^2u_0(y)\frac{F((x-y)t^{-\alpha/2})}{|y|^2}\,dy,\\
\textrm{III}_{2}&=\int_{\stackrel{|y|>\delta t^{\alpha/2}}{|x-y|>\gamma t^{\alpha/2}}}u_0(y)F((x-y)t^{-\alpha/2})\,dy.
\end{aligned}
\]
Since $u_0\in\mathcal{D}_2$, using  the bound $F(\xi)\le C|\log|\xi||\le C|\xi|^{-1/2}$ for $|\xi|\le 1/2$, we get
\[
\textrm{III}_{1}\le  C\delta^{-2}t^{-\alpha}\int_{|x-y|<\gamma t^{\alpha/2}}{|\log|(x-y)t^{-\alpha/2}||}\,dy\le C \delta^{-2}\gamma^{3/2}
\]
if $\gamma\le 1/2$. Therefore, if we choose $\gamma=\delta^{2}$ with $\delta\in(0,1/\sqrt2)$ we finally get that
$\textrm{III}_{1}\le C\delta.$

On the other  hand,
\[
\textrm{III}_{2}\le \|F\|_{L^\infty(\mathbb{R}^2\setminus B_{\delta^2})}\int_{|y|>\delta t^{\a/2}}u_0(y)\,dy<\ep\quad\mbox{if } t\ge t_0(\ep,\delta).
\]
\end{proof}

\section{Intermediate scales}
\label{sect-slow scales} \setcounter{equation}{0}

In this section we consider intermediate scales, $|x|\asymp g(t)$ with $g$ satisfying~\eqref{eq:intermediate.scales}, obtaining both sharp rates and limit profiles. As in the characteristic scale, solutions approach $MZ$, though with a different rate.
We consider high and low dimensions separately.

\subsection{High dimensions} We start with the case $N\ge 3$.
\begin{teo}\label{teo-intermedio sin orden}
Let $N\ge 3$ and  $\kappa>0$ as in~\eqref{eq:constant.Nge2}.  Let $u_0\in L^1(\mathbb{R}^N)$ if $p$ is subcritical, or $u_0\in L^1(\mathbb{R}^N)\cap \mathcal{D}_N$ if $p\in[p_c,\infty]$. For any   $g$ satisfying~\eqref{eq:intermediate.scales} and $0<\nu<\mu<\infty$,
\begin{equation}\label{eq-intermedio-p-1}
(g(t))^{N\big(1-\frac 1p\big)-2}\|t^{\alpha} u(\cdot,t)-M\kappa E_N
\|_{L^p(\{\nu <|x|/g(t)<\mu\})}\to0\quad\mbox{as } t\to\infty.
\end{equation}
As a consequence,
\begin{equation}\label{eq-intermedio-p-2}
t^{\alpha} (g(t))^{N\big(1-\frac 1p\big)-2}\|u(\cdot,t)-M
Z(\cdot,t)\|_{L^p(\{\nu<|x|/g(t)<\mu\})}\to0\quad\mbox{as }t\to\infty.
\end{equation}
\end{teo}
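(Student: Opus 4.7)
The plan is to prove \eqref{eq-intermedio-p-1} directly and to deduce \eqref{eq-intermedio-p-2} as an immediate corollary. For the latter, writing $t^\alpha (u - MZ) = [t^\alpha u - M\kappa E_N] - M[t^\alpha Z - \kappa E_N]$, the profile estimate \eqref{eq:estimate.error.behaviour.profile.N=3} gives $|t^\alpha Z(x,t) - \kappa E_N(x)| \le C t^{-\alpha/2}|x|^{3-N}$ whenever $|x|t^{-\alpha/2} \le 1$; its scaled $L^p$-norm on $\{\nu g(t) < |x| < \mu g(t)\}$ is of order $g(t)/t^{\alpha/2} = o(1)$, so \eqref{eq-intermedio-p-2} follows once \eqref{eq-intermedio-p-1} is established.

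For \eqref{eq-intermedio-p-1}, since $\int u_0 = M$, I would decompose
\begin{equation*}
t^\alpha u(x,t) - M\kappa E_N(x) = I_1(x,t) + \kappa\bigl[\Phi(x) - M E_N(x)\bigr],
\end{equation*}
where $I_1(x,t) := \int u_0(y)\bigl[t^\alpha Z(x-y,t) - \kappa E_N(x-y)\bigr]\,dy$. The scaled $L^p$-norm of the second summand on the intermediate shell tends to $0$ by \eqref{eq:equivalence.Phi.EN.intermediate.scales}, so the task reduces to controlling $I_1$.

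To bound $I_1$ I would split the $y$-integral at $|y| = \delta t^{\alpha/2}$. In the near region $|y| < \delta t^{\alpha/2}$, since $|x|/t^{\alpha/2} \to 0$, for large $t$ one has $|(x-y)/t^{\alpha/2}| \le 1$, and \eqref{eq:estimate.error.behaviour.profile.N=3} gives $|t^\alpha Z(x-y,t) - \kappa E_N(x-y)| \le C t^{-\alpha/2}|x-y|^{3-N}$. Subsplitting at $|x-y| = |x|/2$ separates a bulk part (where $|x-y| \asymp g(t)$, the integrand is $O(t^{-\alpha/2} g(t)^{3-N})$, and one uses $\|u_0\|_{L^1(\R^N)}$) from a near-diagonal part. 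In the far region $|y| > \delta t^{\alpha/2}$, the global bound $F(\xi) \le C E_N(\xi)$ yields $|t^\alpha Z(x-y,t)| \le C|x-y|^{2-N}$, and the contribution vanishes thanks to $\int_{|y|>\delta t^{\alpha/2}} u_0 \to 0$, reducing the question again to a diagonal term.

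The main obstacle is the near-diagonal pieces of both regions when $p$ is non-subcritical: there $|x-y|^{3-N}$ (respectively $|x-y|^{2-N}$) fails to be locally in $L^p$ around $y = x$ for $p$ large enough. The decay hypothesis $u_0 \in \mathcal{D}_N$ enters here to supply the pointwise bound $u_0(y) \le C|y|^{-N} \le C g(t)^{-N}$ when $y$ lies in the intermediate shell, which offsets the singularity via a direct pointwise convolution estimate. The limiting case $p = \infty$ requires the standard adjustment of avoiding Minkowski's inequality in favour of pointwise bounds together with a slow cut-off $\delta = \delta(t) \to 0$ satisfying $\delta(t) t^{\alpha/2} \to \infty$, in the spirit of the final part of the proof of Theorem~\ref{teo-supercritical exterior}.
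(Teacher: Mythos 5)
Your proposal is correct, but it is organized differently from the paper's proof. The paper establishes \eqref{eq-intermedio-p-1} directly, never passing through $\Phi$: it adds and subtracts a truncated potential $\kappa\int_{|x-y|<\delta|x|}u_0(x-y)|y|^{2-N}dy$ and controls four terms using \eqref{eq:estimate.error.behaviour.profile.N=3}, a Mean Value Theorem comparison of $|y|^{2-N}$ with $|x|^{2-N}$, and tail estimates (H\"older for subcritical $p$, the pointwise decay $\mathcal{D}_N$ otherwise). You instead write $t^\alpha u-M\kappa E_N=I_1+\kappa(\Phi-ME_N)$ and delegate the second piece to \eqref{eq:equivalence.Phi.EN.intermediate.scales}. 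This is legitimate and non-circular, because that statement (Theorem 5.2(b) in the paper) is proved purely from properties of $E_N$ and the hypotheses on $u_0$, with no reference to the evolution — but you should say so explicitly, since the paper's own logical order is the reverse (it combines the present theorem with the $\Phi$ result to get $t^\alpha u\approx\kappa\Phi$ in intermediate scales); as a self-contained argument you would have to reprove that lemma, whose proof contains essentially the paper's terms II--IV. What your route buys is that the remaining term $I_1$ needs only the error bound \eqref{eq:estimate.error.behaviour.profile.N=3} and elementary splitting, and indeed your estimates close: the bulk of the near region gives $O(Mg(t)t^{-\alpha/2})$ after scaling, the near-diagonal part works by Young's inequality for subcritical $p$ (since $|z|^{3-N}\in L^p_{\rm loc}$, as $p<N/(N-2)<N/(N-3)$) and by the pointwise bound $u_0(y)\le C|y|^{-N}\le Cg(t)^{-N}$ from $\mathcal{D}_N$ when $p\ge p_c$ (valid since $|y|\ge|x|/2\ge\nu g(t)/2$ there). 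Two minor simplifications: in your far region $|y|>\delta t^{\alpha/2}$ there is no diagonal issue at all, because $|x|\le\mu g(t)=o(t^{\alpha/2})$ forces $|x-y|\ge\delta t^{\alpha/2}/2$, so that piece is $O\big((\delta t^{\alpha/2})^{2-N}\int_{|y|>\delta t^{\alpha/2}}u_0\big)$ and its scaled norm is $o\big((g(t)t^{-\alpha/2})^{N-2}\big)$; and a fixed $\delta$ suffices throughout (every contribution is $O(g(t)t^{-\alpha/2})$ or vanishes outright), so the slow cut-off $\delta(t)\to0$ you invoke for $p=\infty$ is unnecessary. Your deduction of \eqref{eq-intermedio-p-2} from \eqref{eq:estimate.error.behaviour.profile.N=3} coincides with the paper's.
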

\begin{proof}
Since $u(\cdot,t)=Z(\cdot,t)*u_0$,   we have
\[
t^{\alpha}u(x,t)=t^{\alpha}\int Z(y,t)u_0(x-y)\,dy=t^{\frac\alpha2(2-N)}\int F(yt^{-\a/2})u_0(x-y)\,dy.
\]
Adding and subtracting $\displaystyle \kappa\int_{|x-y|<\delta|x|}\frac{u_0(x-y)}{|y|^{N-2}}\,dy$, we obtain the estimate
$$
\begin{aligned}
|t^{\alpha}u(x,t)-M\kappa E_N(x)|&\le{\rm I}(x,t)+{\rm II}(x,t)+{\rm III}(x,t)+{\rm IV}(x,t),\quad\text{where}\\
{\rm I}(x,t)&=
\int_{|x-y|<\delta|x|}\Big|\big(|y|t^{-\alpha/2}\big)^{N-2}F(yt^{-\alpha/2})-\kappa\Big|\frac{u_0(x-y)}{|y|^{N-2}}\,dy,\\
{\rm II}(x,t)&=
\kappa\int_{|x-y|<\delta|x|}\big||y|^{2-N}-|x|^{2-N}\big|u_0(x-y)\,dy,\\
{\rm III}(x,t)&=|x|^{2-N}\kappa \int_{|x-y
	|>\delta|x|}u_0(x-y)\,dy,\\
{\rm IV}(x,t)&=t^{\frac\alpha2(2-N)}\int_{|x-y|>\delta|x|} F(yt^{-\a/2})u_0(x-y)\,dy.
\end{aligned}
$$

Using the estimate for the profile~\eqref{eq:estimate.error.behaviour.profile.N=3} we get
\[
{\rm I}(x,t)\le  C
 t^{-\alpha/2}\int_{|x-y|<\delta|x|}\frac{u_0(x-y)}{|y|^{N-3}}\,dy.
\]
Since $|x-y|<\delta|x|$ implies that  $|y|\ge
|x|/2$    for any $\delta\in (0,1/2)$, then
${\rm I}(x,t)\le  C M t^{-\alpha/2}|x|^{3-N}$.
Thus, since $g(t)=o(t^{-\alpha/2})$,
$$
(g(t))^{N\big(1-\frac 1p\big)-2}\|{\rm I}(\cdot,t)\|_{L^p(\{\nu <|x|/g(t)<\mu\})}\le C M \mu^{N/p}\nu^{3-N}g(t)t^{-\alpha/2}\to0\quad\text{as }t\to\infty.
$$

On the other hand, if $|y|\ge |x|/2$, the Mean Value Theorem yields, for some $s\in[0,1]$,
$$
\big||y|^{2-N}-|x|^{2-N}\big|=
(N-2)(s|x|+(1-s)|y|)^{1-N}\big||x|-|y|\big|\le C|x|^{1-N}|x-y|.
$$
Therefore, ${\rm II}(x,t)\le C\kappa M |x|^{2-N}\delta$  for any $\delta\in (0,1/2)$, and hence
$$
(g(t))^{N\big(1-\frac 1p\big)-2}\|{\rm II}(\cdot,t)\|_{L^p(\{\nu <|x|/g(t)<\mu\})}\le C \kappa M\mu^{N/p}\nu^{2-N}\delta<\varepsilon \quad \text{for all } t\text{ if } \delta \text{ is small enough}.
$$

For any $\delta$ fixed we have ${\rm III}(x,t)\le (\nu g(t))^{2-N}\kappa\int_{|z|\ge\delta\nu g(t)}u_0(z)\,dz$ if $|x|\ge \nu g(t)$. Thus,
since $u_0\in L^1(\mathbb{R}^N)$ and $g(t)\to\infty$ as $t\to\infty$, we get ${\rm III}(x,t)\le (\nu g(t))^{2-N}\kappa\varepsilon$ for all $|x|\ge \nu g(t)$ and $t$ large enough, how large depending on $\delta$ and $\varepsilon$. Hence,
$$
(g(t))^{N\big(1-\frac 1p\big)-2}\|{\rm III}(\cdot,t)\|_{L^p(\{\nu <|x|/g(t)<\mu\})}\le C \kappa  \mu^{N/p}\nu^{2-N}\varepsilon\quad \text{for all } t\text{ large }.
$$

In order to estimate {\rm IV}, we use the bound $F(\xi)\le C|\xi|^{2-N}$ and split the domain of integration to obtain ${\rm IV}\le{\rm IV}_1+{\rm IV}_2$, where
\begin{equation}\label{eq-II}
{\rm IV}_1(x,t)=C\int_{\scriptsize\begin{array}{c}|y|<\gamma|x|\\|x-y|>\delta|x|\end{array}} \frac{u_0(x-y)}{|y|^{N-2}}\,dy,\qquad
{\rm IV}_2(x,t)=C\int_{\scriptsize\begin{array}{c}|y|>\gamma|x|\\|x-y|>\delta|x|\end{array}} \frac{u_0(x-y)}{|y|^{N-2}}\,dy.
\end{equation}
Let $p$ be subcritical. By H\"older's inequality,
$$
{\rm IV}_1(x,t)\le C \left(\int_{\scriptsize\begin{array}{c}|y|<\gamma|x|\\|x-y|>\delta|x|\end{array}} u_0(x-y)\,dy\right)^{\frac{p-1}p}\left(\int_{\scriptsize\begin{array}{c}|y|<\gamma|x|\\|x-y|>\delta|x|\end{array}} \frac{u_0(x-y)}{|y|^{(N-2)p}}\,dy\right)^{\frac1p}.
$$
Therefore, since $p$ is subcritical,
$$
\begin{aligned}
(g(t))^{N\big(1-\frac 1p\big)-2}&\|{\rm IV}_1(\cdot,t)\|_{L^p(\{\nu <|x|/g(t)<\mu\})}\\
&
\le (g(t))^{N\big(1-\frac 1p\big)-2}\left(\int_{|y|<\gamma\mu g(t)} |y|^{(2-N)p}\,dy\right)^{\frac1p}\int_{|z|>\delta\nu g(t)} u_0(z)\,dz\\
&\le C(\gamma\mu)^{2-N\big(1-\frac 1p\big)}\int_{|z|>\delta\nu g(t)} u_0(z)\,dz,
\end{aligned}
$$
which is small for any $\gamma,\delta>0$ fixed, if $t$ is large enough.

If $p$ is not subcritical,  we use the decay property $u_0\in \mathcal{D}_N$. Thus, if we choose $\gamma=\delta^{\frac{N+1}2}$,
\[
\begin{aligned}
{\rm IV}_1(x,t)&\le C \int_{\scriptsize\begin{array}{c}|y|<\gamma|x|\\|x-y|>\delta|x|\end{array}}\frac{dy}{|y|^{N-2}|x-y|^N}\le C|x|^{-N}\delta^{-N}\int_{|y|<\gamma |x|} |y|^{2-N}\,dy=C|x|^{2-N}\delta,
\end{aligned}
\]
and hence
$$
(g(t))^{N\big(1-\frac 1p\big)-2}\|{\rm IV}_1(\cdot,t)\|_{L^p(\{\nu <|x|/g(t)<\mu\})}\le  C\mu^{N/p}\nu^{2-N}\delta<\varepsilon,
$$
if $\delta$ is small enough.

Finally, for $\delta$ and $\gamma$ fixed, $
{\rm IV}_2(x,t)\le C  (\gamma|x|)^{2-N}\int_{|z|>\delta c g(t)}u_0(z)\,dz$.
Therefore,   for all $p\in[1,\infty]$,
$$
(g(t))^{N\big(1-\frac 1p\big)-2}\|{\rm IV}_2(\cdot,t)\|_{L^p(\{\nu <|x|/g(t)<\mu\})}\le  C\mu^{N/p}(\gamma\nu)^{2-N}\int_{|z|>\delta c g(t)}u_0(z)\,dz<\varepsilon,
$$
for $t$ large enough, since $g(t)\to\infty$ as $t\to\infty$, and $u_0\in L^1(\mathbb{R}^N)$, which ends the proof of~\eqref{eq-intermedio-p-1}.

We now proceed to prove~\eqref{eq-intermedio-p-2}.  The result will follow from the estimate~\eqref{eq:estimate.error.behaviour.profile.N=3}. Indeed,
\[
|t^\alpha Z(x,t)- \kappa E_N(x)|=|x|^{2-N}\big|(|x|t^{-\alpha/2})^{N-2}F(xt^{-\alpha/2})-\kappa \big|\le C |x|^{3-N}t^{-\alpha/2}.
\]
Therefore
$$
(g(t))^{N\big(1-\frac 1p\big)-2}\|t^\alpha Z(\cdot,t)- \kappa E_N\|_{L^p(\{\nu<|x|/g(t)<\mu\})}\le C\mu^{N/p}\nu^{3-N}g(t)t^{-\alpha/2},
$$
which combined with~\eqref{eq-intermedio-p-1} yields the result, since $g(t)=o(t^{\alpha/2})$.
\end{proof}

\subsection{Low dimensions} We now turn our attention to  the two dimensional case, where we find logarithmic corrections in the decay rates. In contrast with the case of higher dimensions, the asymptotic profile is a constant. However, which one depends on the precise intermediate scale under consideration.
\begin{teo}\label{teo-intermedio sin orden N=2}
Let $N=2$ and  $\kappa>0$ as in~\eqref{eq:constant.Nge2}.
Let $u_0\in L^1(\mathbb{R}^2)$ if $p$ is subcritical, or $u_0\in L^1(\mathbb{R}^2)\cap \mathcal{D}_2$ if $p=\infty$. For any   $g$ satisfying~\eqref{eq:intermediate.scales} and $0<\nu<\mu<\infty$ there holds
\begin{equation}\label{eq-intermedio1 N=2}
g(t)^{-\frac 2p}\Big\|\frac{t^{\a}}{|\log(g(t)t^{-\alpha/2})|}u(\cdot,t)-M  \kappa\Big\|_{L^p(\{\nu <|x|/g(t)<\mu\})}\to0\quad\mbox{as } t\to\infty.
\end{equation}
As a consequence,
\begin{equation}\label{eq-intermedio2 N=2}
\frac{g(t)^{-
		\frac2p}t^{\a}}{|\log (g(t)t^{-\alpha/2})|}
\|{u(\cdot,t)-M  Z(\cdot,t)}\|_{L^p(\{\nu <|x|/g(t)<\mu\})}\to0\quad\mbox{as }t\to\infty.
\end{equation}
\end{teo}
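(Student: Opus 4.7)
The plan mirrors the proof of Theorem~\ref{teo-intermedio sin orden} for $N\ge3$, with $E_2(\xi)=-\log|\xi|$ replacing $|\xi|^{2-N}$ and additional care because $F$ is no longer bounded at the origin but only logarithmically singular. Since $g(t)=o(t^{\a/2})$, one has $|\log(g(t)t^{-\a/2})|\to\infty$, and on the annulus $\nu g(t)<|x|<\mu g(t)$ there holds $|\log(|x|t^{-\a/2})|=|\log(g(t)t^{-\a/2})|+O(1)$. Hence~\eqref{eq-intermedio1 N=2} is equivalent to proving
\[
\bigl\|t^{\a} u(\cdot,t)-M\kappa\,\bigl|\log(|\cdot|t^{-\a/2})\bigr|\bigr\|_{L^p(\{\nu<|x|/g(t)<\mu\})}=o\bigl(g(t)^{2/p}\,|\log(g(t)t^{-\a/2})|\bigr),
\]
because replacing the spatial logarithm by $|\log(g(t)t^{-\a/2})|$ introduces only a bounded error, which is absorbed after dividing by $|\log(g(t)t^{-\a/2})|$.

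Starting from $t^{\a}u(x,t)=\int F(yt^{-\a/2})u_0(x-y)\,dy$, I would add and subtract $\kappa|\log(|y|t^{-\a/2})|u_0(x-y)\chi_{\{|x-y|<\delta|x|\}}$, and write $M\kappa|\log(|x|t^{-\a/2})|$ as an integral of the same logarithm against $u_0(x-y)\,dy$, producing the splitting $|t^{\a}u(x,t)-M\kappa|\log(|x|t^{-\a/2})||\le I+II+III+IV$, where $I$ compares $F$ to the logarithmic profile on $\{|x-y|<\delta|x|\}$, $II$ shifts the logarithm from $|y|$ to $|x|$, $III$ is the outer tail of the logarithmic profile, and $IV$ the outer convolution against $F$. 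Term $I$ is handled via the qualitative asymptotic $F(\xi)/(-\log|\xi|)\to\kappa$: for each $\ve>0$, once $|y|t^{-\a/2}$ is small enough (which holds uniformly for $t$ large, since $|y|\asymp g(t)$ in this region), $|F(yt^{-\a/2})-\kappa|\log(|y|t^{-\a/2})||\le\ve|\log(|y|t^{-\a/2})|\le C\ve|\log(g(t)t^{-\a/2})|$. Term $II$ is bounded by $CM\kappa\delta$ by the mean value theorem applied to $\log$, since $|y|/|x|\in(1-\delta,1+\delta)$, and is therefore $o(|\log(g(t)t^{-\a/2})|)$. Term $III$ uses $|\log(|x|t^{-\a/2})|\asymp|\log(g(t)t^{-\a/2})|$ together with $\int_{|z|>\delta\nu g(t)}u_0\to 0$. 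Taking the $L^p$ norm on the annulus yields the volume factor $g(t)^{2/p}$, and the combined contribution of $I,II,III$ is $o(g(t)^{2/p}|\log(g(t)t^{-\a/2})|)$ upon choosing first $\delta$ small and then $t$ large.

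The principal difficulty lies in $IV$, which I would split as $IV=IV_1+IV_2$ according to $|y|<\gamma|x|$ or $|y|>\gamma|x|$, in analogy with the $N\ge3$ proof. For $IV_2$ the global bound $F(\xi)\le C(1+|\log|\xi||)$ from~\eqref{eq:global.bounds.F} and the decay~\eqref{eq:bounds.F.origin} combine with $\int_{|z|>\delta\nu g(t)}u_0\to 0$ to give the required smallness, the slowly growing logarithm being absorbed by the vanishing integral. For $IV_1$ with subcritical $p$, H\"older's inequality is applied using that $F\in L^p_{\mathrm{loc}}(\R^2)$ for every $p<\infty$ and again $\int_{|z|>\delta\nu g(t)}u_0\to 0$; for $p=\infty$ one invokes $u_0\in\mathcal{D}_2$, i.e.\ $|u_0(z)|\le C|z|^{-2}$ for $|z|\ge R$, and chooses $\gamma=\gamma(\delta)$ so that the logarithmic factor generated by integrating $F(\xi)\le C|\log|\xi||$ over a ball of radius $\gamma|x|t^{-\a/2}$ is absorbed by a small power of $\delta$, exactly in the spirit of the treatment of $IV_1$ for $N\ge3$.

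The consequence~\eqref{eq-intermedio2 N=2} follows by noting $t^{\a}Z(x,t)=F(xt^{-\a/2})$ and using~\eqref{eq:constant.Nge2} with $N=2$ to obtain
\[
\bigl|t^{\a}Z(x,t)-\kappa|\log(|x|t^{-\a/2})|\bigr|=o\bigl(|\log(|x|t^{-\a/2})|\bigr)\quad\text{uniformly on }\{\nu<|x|/g(t)<\mu\},
\]
whose $L^p$ norm on the annulus is $o(g(t)^{2/p}|\log(g(t)t^{-\a/2})|)$; together with~\eqref{eq-intermedio1 N=2} and the triangle inequality this yields~\eqref{eq-intermedio2 N=2} after dividing by $|\log(g(t)t^{-\a/2})|$. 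The hardest step I foresee is the $p=\infty$ case of $IV_1$: the unboundedness of $F$ near the origin (absent for $N\ge3$) forces the decay hypothesis $u_0\in\mathcal{D}_2$ to simultaneously deliver the smallness of a tail integral of $u_0$ and absorb the logarithmic singularity of $F$, keeping the total pointwise bound $o(|\log(g(t)t^{-\a/2})|)$ uniformly in $x$ in the annulus.
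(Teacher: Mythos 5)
Your proposal is correct and follows essentially the same route as the paper's proof: the same four-term decomposition (comparison of $F$ with the logarithmic profile on $\{|x-y|<\delta|x|\}$, the shift of the logarithm, the outer tail, and the outer convolution split into $|y|\lessgtr\gamma|x|$), with the finite-$p$ case of the last term handled through the $L^p$ norm of $F$ on a ball of radius $\asymp\gamma g(t)$ plus the vanishing tail of $u_0$, the case $p=\infty$ through $u_0\in\mathcal{D}_2$, and the passage to~\eqref{eq-intermedio2 N=2} via~\eqref{eq:constant.Nge2}. Your only deviation is cosmetic: you normalize by $|\log(|x|t^{-\alpha/2})|$ rather than $|\log(g(t)t^{-\alpha/2})|$, which is legitimate since the two differ by $O(1)$ on the annulus.
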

\begin{proof}
Proceeding similarly to what we did in the proof of Theorem~\ref{teo-intermedio sin orden} we get the estimate
$$
\begin{aligned}
\Big|&\frac{t^\alpha}{|\log(g(t)t^{-\alpha/2})|}u(x,t)-M  \kappa\Big|
\le{\rm I}(x,t)+{\rm II}(x,t)+{\rm III}(x,t)+{\rm IV}(x,t),\quad\text{where}\\	
{\rm I}(x,t)&=\frac{1}{|\log( g(t)t^{-\alpha/2})|}\int_{|x-y|<\delta|x|}\left|\frac{F(yt^{-\alpha/2})}{\big|\log(|y|t^{-\alpha/2})\big|}-\kappa\right|\big|\log(|y|t^{-\alpha/2})\big|
u_0(x-y)\,dy,\\
{\rm II}(x,t)&=\kappa\int_{|x-y|<\delta|x|}\left|\frac{\big|\log(|y| t^{-\alpha/2})\big|}{|\log (g(t)t^{-\alpha/2})|}-1\right|u_0(x-y)\,dy,\\
{\rm III}(x,t)&=\kappa\int_{|x-y|>\delta|x|}u_0(x-y)\,dy,\\
{\rm IV}(x,t)&=\frac1{|\log(g(t)t^{-\alpha/2})|}\int_{|x-y|>\delta|x|} F(yt^{-\a/2})u_0(x-y)\,dy.
\end{aligned}
$$

Let  $\nu <|x|/g(t)<\mu$ and $|x-y|<\delta|x|$. If $\delta\in (0,1/2)$, then  $|x|/2\le |y|\le 3|x|/2$, and hence
\begin{equation*}
\label{eq:bounds.region.under.consideration}
\frac\nu2 g(t)t^{-\alpha/2}< |y|t^{-\alpha/2}<\frac{3\mu}2
g(t)t^{-\alpha/2}.
\end{equation*}
In particular, $|y|t^{-\alpha/2}\to 0$ if $g(t)=o(t^{-\alpha/2})$. Moreover,
$$
1-\frac{|\log(3\mu/2)|}{|\log(g(t)t^{-\alpha/2}|}\le\frac{\big|\log(|y|t^{-\alpha/2})\big|}{|\log( g(t)t^{-\alpha/2})|}\le1+\frac{|\log(\nu/2)|}{|\log(g(t)t^{-\alpha/2})|},
$$
and therefore
\begin{equation}
\label{eq:logs.approach.1}
\frac{\big|\log(|y|t^{-\alpha/2})\big|}{|\log( g(t)t^{-\alpha/2})|}=1+o(1)\quad\text{as }t\to\infty.
\end{equation}
Using also the behavior of $F$ at the origin~\eqref{eq:constant.Nge2}, we get, for $t$ large enough, ${\rm I}(x,t)\le C\ep$.

The estimate \eqref{eq:logs.approach.1} also yields ${\rm II}(x,t)\le M \kappa\ep$.

On the other hand, since $g(t)\to\infty$ and $u_0\in L^1(\mathbb{R}^2)$,
\[
{\rm III}(x,t)\le \kappa\int_{|z|>\delta\nu g(t)}u_0(z)\,dz\to 0\quad\text{as }t\to\infty.
\]
We therefore have
$$
(g(t))^{-\frac 2p}\|({\rm I +II
	+III})(\cdot,t)\|_{L^p(\{\nu <|x|/g(t)<\mu\})})\le C\varepsilon\quad\text{for }t\text{ large}.
$$

In order to estimate ${\rm IV}$, we make the decomposition ${\rm IV}(x,t)={\rm IV}_1(x,t)+{\rm IV}_2(x,t)$, where
\[\begin{aligned}
&{\rm IV}_1(x,t)=\frac1{|\log (g(t)t^{-\alpha/2})|}\int_{\scriptsize\begin{array}{c}|y|<\gamma|x|\\|x-y|>\delta|x|\end{array}} F(yt^{-\a/2})u_0(x-y)\,dy,\\
&{\rm IV}_2(x,t)=\frac1{|\log (g(t)t^{-\alpha/2})|}\int_{\scriptsize\begin{array}{c}|y|>\gamma|x|\\|x-y|>\delta|x|\end{array}} F(yt^{-\a/2})u_0(x-y)\,dy.
\end{aligned}
\]
Let $p\in[1,\infty)$. Then,  using the behavior~\eqref{eq:constant.Nge2} and passing to radial coordinates,
$$
\begin{aligned}
(g(t))^{-\frac 2p}&\|{\rm IV}_1(\cdot,t)\|_{L^p(\{\nu <|x|/g(t)<\mu\})}\\
&
\le\frac1{(g(t))^{\frac 2p}|\log (g(t)t^{-\alpha/2})|} \left(\int_{|y|<\gamma\mu g(t)} F^p(yt^{-\a/2})\,dy\right)^{\frac1p}\int_{|z|>\delta\nu g(t)} u_0(z)\,dz\\
&\le\frac C{(g(t))^{\frac 2p}|\log (g(t)t^{-\alpha/2})|} \left(t^\alpha\int_0^{\gamma\mu g(t)t^{-\alpha/2}} r|\log r|^{p}\,dr\right)^{\frac1p}\int_{|z|>\delta\nu g(t)} u_0(z)\,dz\\
\end{aligned}
$$
Therefore, since $\int_0^sr|\log r|^p\,ds\le C s^2|\log s|^p$ for all $s\in (0,\frac12)$, we conclude that
$$
(g(t))^{-\frac 2p}\|{\rm IV}_1(\cdot,t)\|_{L^p(\{\nu <|x|/g(t)<\mu\})}\le \frac{C\gamma^2\mu^2 |\log(\gamma\mu g(t)t^{-\alpha/2})|}{|\log (g(t)t^{-\alpha/2})|}\int_{|z|>\delta\nu g(t)} u_0(z)\,dz
$$
which is small for any $\gamma,\delta>0$ fixed, if $t$ is large enough.

For $p=\infty$ we use the decay assumption $u_0\in\mathcal{D}_2$. Thus, using the bound for $F$ and integrating in radial coordinates,
\[
\begin{aligned}
{\rm IV}_1(x,t)&\le \frac C{\big|\log(g(t)t^{-\alpha/2})|}\int_{\scriptsize\begin{array}{c}|y|<\gamma|x|\\|x-y|>\delta|x|\end{array}} \frac{\big|\log(|y|t^{-\alpha/2})\big|}{|x-y|^2}\,dy\\
&\le
\frac C{\delta^2|x|^2\big|\log(g(t)t^{-\alpha/2})|}\int_{|y|<\gamma |x|} \big|\log(|y|t^{-\alpha/2})\big|\,dy\\
&= \frac {C\gamma^2}{\delta^2\big|\log(g(t)t^{-\alpha/2})|}\Big(\frac12\big|\log(\gamma |x|t^{-\alpha/2})\big|+\frac14\Big).
\end{aligned}
\]
Now, since $g(t)=o(t^{\alpha/2})$,  in the region under consideration
\[
\frac{\big|\log(\gamma|x|t^{-\alpha/2})\big|}{|\log( g(t)t^{-\alpha/2})|}\le1+\frac{|\log(\gamma\nu/2)|}{|\log(g(t)t^{-\alpha/2})|}
\le 2\quad\mbox{if }t \text{ is large}.
\]
We conclude that ${\rm IV}_1(x,t)\le C\frac{\gamma^2}{\delta^2}<\ep$
if we choose $\gamma$ small and $t$ large.

In order to bound ${\rm IV}_2$ we observe that
$$
\sup_{|\xi|\ge \gamma\nu\frac{g(t)}{t^{\alpha/2}}} F(\xi)=\max\Big\{\sup_{\gamma\nu \frac{g(t)}{t^{\alpha/2}}\le |\xi|\le 1} F(\xi),\sup_{|\xi|\ge 1} F(\xi)\Big\}\le C(1+\big|\log(\gamma\nu g(t)t^{-\alpha/2})\big|).
$$
Hence, since $g(t)=o(t^{\alpha/2})$, for $\gamma$ and $\delta$ fixed,
\[
{\rm IV}_2(x,t)\le \frac{C(1+\big|\log(\gamma\nu g(t)t^{-\alpha/2})\big|)}{|\log (g(t)t^{-\alpha/2})|}\int_{|x-y|>\delta\nu g(t)}u_0(x-y)\,dy\le C\int_{|z|>\delta\nu g(t)}u_0(z)\,dz
\]
Therefore, for all $p\in[1,\infty]$,
$$
(g(t))^{-\frac 2p}\|{\rm IV}_2(\cdot,t)\|_{L^p(\{\nu <|x|/g(t)<\mu\})}\le  C\int_{|z|>\delta c g(t)}u_0(z)\,dz<\varepsilon,
$$
for $t$ large enough, since $g(t)\to\infty$ as $t\to\infty$, and $u_0\in L^1(\mathbb{R}^2)$,
which completes the proof of~\eqref{eq-intermedio1 N=2}.

We now proceed to prove~\eqref{eq-intermedio2 N=2}. We have
\[
\begin{aligned}
\Big|\frac{t^{\a}}{|\log(g(t)t^{-\alpha/2})|}
&Z(x,t)- \kappa\Big|= \Big|\frac{F(xt^{-\alpha/2})}{|\log(g(t)t^{-\alpha/2})|}
- \kappa\Big|\le \psi(x,t)+\chi(x,t),\quad\text{where }
\\ \psi(x,t)&=\left|\frac{F(xt^{-\alpha/2})}{\big|\log(|x|t^{-\alpha/2})\big|}-\kappa\right|,\quad \chi(x,t)=\left|\frac{F(xt^{-\alpha/2})}{\big|\log(|x|t^{-\alpha/2})\big|}\right|\,\left|\frac{\big|\log(|x|t^{-\alpha/2})\big|}{\big|\log(g(t)t^{-\alpha/2})\big|}-1\right|.
\end{aligned}
\]
Remember that we are working in the region $\nu g(t)\le |x|\le \mu g(t)$. Hence, since $g(t)=o(t^{\alpha/2})$, we have that $|x|t^{-\alpha/2}\to0$. Therefore,  the behavior~\eqref{eq:constant.Nge2} yields that $\psi(\cdot,t)\to0$ uniformly as $t\to\infty$. As for $\chi$, we have
$$
1-\frac{|\log\mu|}{\big|\log(g(t)t^{-\alpha/2})\big|}\le\frac{\big|\log(|x|t^{-\alpha/2})\big|}{\big|\log(g(t)t^{-\alpha/2})\big|}\le 1+\frac{|\log\nu|}{\big|\log(g(t)t^{-\alpha/2})\big|};
$$
that is,
$$
\frac{\big|\log(|x|t^{-\alpha/2})\big|}{\big|\log(g(t)t^{-\alpha/2})\big|}=1+o(1).
$$
Therefore, since $F(\xi)/|\log|\xi||\le C$ for $|\xi|\le 1/2$, $\chi(\cdot,t)$ also goes to 0 uniformly as $t\to\infty$, and we conclude that
$$
\Big|\frac{t^{\a}}{|\log(g(t)t^{-\alpha/2})|}
Z(x,t)- \kappa\Big|\to0\quad\text{as }t\to\infty.
$$
Hence,
$$
(g(t))^{-\frac 2p}\Big\|\frac{t^{\a}}{|\log(g(t)t^{-\alpha/2})|} Z(\cdot,t)- \kappa\Big\|_{L^p(\{\nu<|x|/g(t)<\mu\})}\to 0\quad\text{as }t\to\infty,
$$
which combined with~\eqref{eq-intermedio1 N=2} yields the result.
\end{proof}

The result in the one-dimensional case follows immediately from Theorem~\ref{L^p}. As in the two-dimensional case, the behavior is given by a constant, but now it is the same one for all intermediate scales.
\begin{teo}
Let $N=1$  and $u_0\in L^1(\mathbb{R})$. For any   $g$ satisfying~\eqref{eq:intermediate.scales} and $0<\nu<\mu<\infty$,
$$
(g(t))^{-\frac 1p}\|t^{\frac\alpha2} u(\cdot,t)-MF(0)
\|_{L^p(\{\nu <|x|/g(t)<\mu\})}\to0\quad\mbox{as } t\to\infty.
$$
\end{teo}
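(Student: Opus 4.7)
The plan is to deduce the statement directly from Theorem~\ref{L^p} applied with $N=1$ and $p=\infty$, which gives
\[
t^{\frac{\alpha}{2}}\|u(\cdot,t)-M Z(\cdot,t)\|_{L^\infty(\mathbb{R})}\to 0\quad\text{as }t\to\infty.
\]
Since in dimension one, $F$ is continuous at the origin (see~\eqref{eq:global.bounds.F}), one exploits this regularity to replace $MZ$ by the constant $MF(0)$ in the intermediate regime.

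First, I would use the self-similar form $t^{\alpha/2}Z(x,t)=F(xt^{-\alpha/2})$ and the triangle inequality to write
\[
|t^{\alpha/2}u(x,t)-M F(0)|\le t^{\alpha/2}|u(x,t)-M Z(x,t)|+M |F(xt^{-\alpha/2})-F(0)|.
\]
In the region $\nu g(t)<|x|<\mu g(t)$, the hypothesis $g(t)=o(t^{\alpha/2})$ forces $|xt^{-\alpha/2}|\le \mu g(t)t^{-\alpha/2}\to 0$ uniformly. By continuity of $F$ at $0$, the second term on the right tends to $0$ uniformly in that region, while the first term tends to $0$ uniformly on all of $\mathbb{R}$ by Theorem~\ref{L^p}. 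Hence
\[
\bigl\|t^{\alpha/2}u(\cdot,t)-M F(0)\bigr\|_{L^\infty(\{\nu<|x|/g(t)<\mu\})}\to 0\quad\text{as }t\to\infty.
\]

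To pass from $L^\infty$ to $L^p$, I would bound the $L^p$ norm by the $L^\infty$ norm times the $p$-th root of the measure of the set, which is $(2(\mu-\nu)g(t))^{1/p}$. Multiplying by $(g(t))^{-1/p}$ absorbs the factor $g(t)^{1/p}$ and leaves a constant depending only on $\nu,\mu,p$ times the $L^\infty$ norm, which vanishes. No genuine obstacle appears, the only ingredients being the global sharp $L^\infty$ estimate from Theorem~\ref{L^p} and the continuity of $F$ at the origin, both of which are specific to $N=1$.
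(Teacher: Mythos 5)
Your argument is correct and is precisely the deduction the paper intends: the paper proves this theorem by simply noting it ``follows immediately from Theorem~\ref{L^p}'', and your write-up supplies exactly that deduction (the global $L^\infty$ estimate for $N=1$, continuity of $F$ at the origin so that $F(xt^{-\alpha/2})\to F(0)$ uniformly on $\{\nu g(t)<|x|<\mu g(t)\}$ since $g(t)=o(t^{\alpha/2})$, and the measure of the annulus $\asymp g(t)$ absorbing the prefactor $(g(t))^{-1/p}$). One small citation quibble: the continuity of $F$ at $0$ for $N=1$ comes from the paper's discussion of the profile (e.g.\ the uniform bound on $DF$ when $N=1$), not from~\eqref{eq:global.bounds.F}, which only gives boundedness.
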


\section{Compact sets}
\label{sect-compact sets} \setcounter{equation}{0}

In this section we determine the decay rate and the asymptotic profile in compact sets. The situation depends strongly on  the dimension.
For $N\ge 3$ the asymptotic profile will be given by the Newtonian potential of the initial datum,  a new phenonemon that is not present in the local case, with a decay rate $t^{-\alpha}$ independent of the dimension. For low dimensions we have convergence towards a constant instead, and a slower decay rate.

\subsection{The Newtonian potential} Since the large time behavior in compact sets for large dimensions will be given by the Newtonian potential of the initial datum, we start by giving conditions to guarantee that it is locally well defined.

\begin{teo}
Let $N\ge 3$ and $\mu>0$.

\noindent {\rm (a)} Let $u_0\in L^1(\R^N)$.  Then
$$
\|\Phi\|_{L^p(B_\mu)}
\le
M\|E_N\|_{L^p(B_{1})}+
M|B_\mu|^{1/p}\quad\text{for all }p\in[1,p_c).
$$

\noindent {\rm (b)} Let $u_0\in L^1(\R^N)$.  Then $\Phi\in M^{p_c}(\R^N)$.

\noindent {\rm (c)} Let $u_0\in  L^1(\R^N)\cap L^{p}_{\rm loc}(\R^N)$, $p\ge [p_c,\infty]$. Then
\begin{equation}
\label{eq:estimates.Phi}
\|\Phi\|_{L^p(B_\mu)}\le
\begin{cases}
\|u_0\|_{L^{p}(B_{\mu+1})}\|E_N\|_{L^1(B_1)}+
M|B_\mu|^{1/p},&p=p_c,\\
\|u_0\|_{L^p(B_{\mu+1})}\|E_N\|_{L^1(B_1)}+
M\|E_N\|_{ L^p(\mathbb{R}^N\setminus B_1)},\quad&p\in(p_c,\infty].
\end{cases}
\end{equation}
\end{teo}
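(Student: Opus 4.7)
The plan is to split the kernel into its singular near part and its bounded far part,
$$
\Phi(x)=\Phi_1(x)+\Phi_2(x),\qquad \Phi_1(x)=\int_{|y|<1}\frac{u_0(x-y)}{|y|^{N-2}}\,dy,\qquad \Phi_2(x)=\int_{|y|>1}\frac{u_0(x-y)}{|y|^{N-2}}\,dy,
$$
and apply Young's convolution inequality with an appropriate splitting of integrability between $u_0$ and $E_N\mathbf{1}_{B_1}$ versus $u_0$ and $E_N\mathbf{1}_{\mathbb{R}^N\setminus B_1}$, chosen according to which of the three cases we are in. Note that $E_N\mathbf{1}_{B_1}\in L^p(\mathbb{R}^N)$ exactly for $p<p_c$, while $E_N\mathbf{1}_{\mathbb{R}^N\setminus B_1}\in L^p(\mathbb{R}^N)$ exactly for $p>p_c$, with $E_N\mathbf{1}_{\mathbb{R}^N\setminus B_1}\in L^\infty$ as well.

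For part (a), since $p<p_c$, Young's inequality $L^1*L^p\subset L^p$ applied to $\Phi_1=u_0*(E_N\mathbf{1}_{B_1})$ yields $\|\Phi_1\|_{L^p(\mathbb{R}^N)}\le M\|E_N\|_{L^p(B_1)}$. For $\Phi_2$ we use $|y|^{2-N}\le1$ on $|y|>1$, giving the pointwise bound $\Phi_2(x)\le M$, hence $\|\Phi_2\|_{L^p(B_\mu)}\le M|B_\mu|^{1/p}$. Summing gives (a).

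For part (c), observe that for $x\in B_\mu$ and $|y|<1$ we have $x-y\in B_{\mu+1}$, so $\Phi_1(x)=(u_0\mathbf{1}_{B_{\mu+1}})*(E_N\mathbf{1}_{B_1})(x)$; Young's inequality $L^p*L^1\subset L^p$ then gives $\|\Phi_1\|_{L^p(B_\mu)}\le\|u_0\|_{L^p(B_{\mu+1})}\|E_N\|_{L^1(B_1)}$. For $\Phi_2$, in the case $p=p_c$ we again use the pointwise bound $\Phi_2\le M$ and conclude $\|\Phi_2\|_{L^{p_c}(B_\mu)}\le M|B_\mu|^{1/p_c}$; for $p\in(p_c,\infty]$ we instead apply Young's $L^1*L^p\subset L^p$ to $\Phi_2=u_0*(E_N\mathbf{1}_{\mathbb{R}^N\setminus B_1})$ to obtain $\|\Phi_2\|_{L^p(\mathbb{R}^N)}\le M\|E_N\|_{L^p(\mathbb{R}^N\setminus B_1)}$, which is finite precisely because $p>p_c$. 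Adding the two contributions gives~\eqref{eq:estimates.Phi}.

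The only delicate point is part (b), the borderline case, because $E_N\notin L^{p_c}$ on either $B_1$ or its complement, and the standard strong-type Young inequality fails. The idea is to work in the Marcinkiewicz (equivalently, weak-$L^{p_c}$) scale: a direct computation shows $|y|^{2-N}\in M^{p_c}(\mathbb{R}^N)$, so $\Phi=u_0*E_N$ is the convolution of an $L^1$ function with a function in $M^{p_c}$. The weak Young inequality (or equivalently the Hardy--Littlewood--Sobolev inequality for the Riesz potential $I_2$ with endpoint $q=1$) then yields $\|\Phi\|_{M^{p_c}(\mathbb{R}^N)}\le C\|u_0\|_{L^1(\mathbb{R}^N)}\|E_N\|_{M^{p_c}(\mathbb{R}^N)}$, giving~(b). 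This endpoint estimate is where the real content of the proof lies; the other two cases are routine once the splitting is fixed.
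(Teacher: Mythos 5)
Your proof is correct and follows essentially the same route as the paper's: the same decomposition of the kernel at $|y|=1$ for parts (a) and (c), with Young's inequality supplying the details the paper treats as immediate, and the same weak-type convolution bound $\|u_0*E_N\|_{M^{p_c}(\mathbb{R}^N)}\le C\|u_0\|_{L^1(\mathbb{R}^N)}\|E_N\|_{M^{p_c}(\mathbb{R}^N)}$ for part (b).
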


\begin{proof} (a) The result follows easily from the decomposition $\Phi=\Phi_1+\Phi_2$, where
\begin{equation*}
\label{eq:decomposition.Phi}
\Phi_1(x)=\int_{|y|<1}\frac{u_0(x-y)}{|y|^{N-2}}\,dy,\quad \Phi_2(x)=\int_{|y|>1}\frac{u_0(x-y)}{|y|^{N-2}}\,dy.
\end{equation*}

\noindent (b) Since $E_N\in M^{p_c}(\R^N)$, the result follows immediately, because   $$
\|\Phi\|_{M^{p_c}(\mathbb{R}^N
	)}=\|u_0*E_N\|_{M^{p_c}(\mathbb{R}^N)}\le \|u_0\|_{L^1(\mathbb{R}^N)}\|E_N\|_{M^{p_c}(\mathbb{R}^N)}.
$$

\noindent (c) We perform the same decomposition as in (a). On the one hand $$
\|\Phi_1\|_{L^p(B_\mu)}\le \|u_0\|_{L^p(B_{\mu+1})}\|E_N\|_{L^1(B_1)}.
$$
On the other hand,  $\Phi_2(x)\le \int_{|y|>1}u_0(x-y)\,dy\le M$. Hence, $\|\Phi_2\|_{L^{p_c}(B_\mu)}\le M|B_\mu|^{1/p_c}$. If $p$ is supercritical we obtain a better estimate for this term, namely $\|\Phi_2\|_{L^p(B_\mu)}\le
M\|E_N\|_{ L^p(\mathbb{R}^N\setminus B_1)}$.
\end{proof}

\noindent\emph{Remark. } As a consequence of (c), if  $u_0\in  L^1(\R^N)\cap L^{p}(\R^N)$, $p\in (p_c,\infty]$, then, $\Phi\in L^p(\mathbb{R}^N)$.

\medskip

We study now the behavior at infinity of  $\Phi$, a result of independent interest which was already well known for $u_0$ integrable and compactly supported.
\begin{teo}
	Let $N\ge 3$.
	
	\noindent{\rm (a)}   If $u_0\in L^1(\R^N)\cap \D_N$, then $|x|^{N-2}\Phi(x)\to M$ as $|x|\to \infty$.
	
	\noindent{\rm (b)} Let $u_0\in L^1(\R^N)$  if $p$ is subcritical and $u_0\in L^1(\R^N)\cap \D_N$ otherwise. Then, for any $g$ going to infinity,
	$$
	(g(t))^{N\big(1-\frac 1p\big)-2}\|\Phi- ME_N\|_{L^p(\{\nu<|x|/g(t)<\mu\})}\to0\quad\mbox{as }t\to\infty, \qquad0<\nu\le \mu<\infty,
	$$
\end{teo}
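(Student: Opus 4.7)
I would attack (a) first and deduce (b) from it, handling the subcritical range of (b) by a separate approximation argument.

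For (a), the starting identity, obtained by the change of variables $z = x-y$, is
\[
|x|^{N-2}\Phi(x)-M=\int_{\R^N}\Big(\frac{|x|^{N-2}}{|x-z|^{N-2}}-1\Big)u_0(z)\,dz.
\]
I would split the $z$-integration with a parameter $\delta\in(0,1/2)$. On $\{|z|<\delta|x|\}$ the ratio $|x|/|x-z|$ lies in $[(1+\delta)^{-1},(1-\delta)^{-1}]$, so the integrand is bounded by $C\delta\,|u_0(z)|$ and this contribution is at most $C\delta M$. On $\{|z|\ge\delta|x|\}$ the ``$-1$'' piece contributes at most $\int_{|z|>\delta|x|}|u_0|\to 0$ by $u_0\in L^1$, and it remains to estimate
\[
I(x)=\int_{|z|\ge\delta|x|}\frac{|x|^{N-2}}{|x-z|^{N-2}}|u_0(z)|\,dz.
\]

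I would further split $I$ according to whether $|x-z|\ge|x|/2$ or $|x-z|<|x|/2$. In the first range the kernel is bounded by $2^{N-2}$ and the $L^1$ tail argument gives $o(1)$. In the second range one has $|z|\asymp|x|$, hence $u_0\in\D_N$ provides $|u_0(z)|\le C|z|^{-N}\le C|x|^{-N}$; changing variables $w=x-z$, the naive bound of the integral of $|w|^{2-N}$ over $\{|w|<|x|/2\}$ yields only $O(1)$, so a \emph{second} splitting parameter $\eta$ is needed. For $|w|<\eta|x|$, combine $|u_0(x-w)|\le C|x|^{-N}$ with $\int_{|w|<\eta|x|}|w|^{-(N-2)}\,dw=C(\eta|x|)^2$ to get $C\eta^2$; for $\eta|x|\le|w|<|x|/2$, use $|w|\ge\eta|x|$ on the kernel and $\int_{|z|>|x|/2}|u_0|\to0$ to get $C\eta^{-(N-2)}\int_{|z|>|x|/2}|u_0|$. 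Choosing first $\eta$, then $\delta$ small, and finally $|x|$ large yields $I(x)\to 0$.

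For (b), the pointwise identity $\Phi(x)-ME_N(x)=|x|^{-(N-2)}\bigl(|x|^{N-2}\Phi(x)-M\bigr)$ gives, with $A(t)=\{\nu<|x|/g(t)<\mu\}$,
\[
\|\Phi-ME_N\|_{L^p(A(t))}\le \sup_{x\in A(t)}\bigl||x|^{N-2}\Phi(x)-M\bigr|\cdot\|E_N\|_{L^p(A(t))}.
\]
A direct computation in radial coordinates shows that for every $p\in[1,\infty]$ the prefactor exponents cancel exactly, so $(g(t))^{N(1-1/p)-2}\|E_N\|_{L^p(A(t))}$ is bounded uniformly in $t$. When $u_0\in L^1\cap\D_N$, part (a) closes the argument. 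For the subcritical case, where only $u_0\in L^1$ is assumed, I would decompose $u_0=u_0^R+u_0^\infty$ with $u_0^R=u_0\chi_{B_R}$ compactly supported (hence in $\D_N$) and $\|u_0^\infty\|_{L^1}$ arbitrarily small for $R$ large. Part (a) applied to $u_0^R$ controls $\Phi^R-M^R E_N$, while the residuals involving $u_0^\infty$ are bounded via the weak-type continuity $f\mapsto f\ast|\cdot|^{2-N}:L^1\to L^{p_c,\infty}$ together with the embedding $L^{p_c,\infty}(A(t))\hookrightarrow L^p(A(t))$ valid for $p<p_c$, yielding $(g(t))^{N(1-1/p)-2}\|\Phi^\infty\|_{L^p(A(t))}\le C\|u_0^\infty\|_{L^1}$.

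The main obstacle is the tight estimate of $I(x)$ on $\{|x-z|<|x|/2\}$ in part (a): the pointwise bound alone gives only $O(1)$, and extracting the required $o(1)$ hinges on the two-parameter $(\delta,\eta)$ splitting, which combines the decay hypothesis (to control the integral near $w=0$) with the global $L^1$-tail smallness of $u_0$ (to control the part with $|w|$ bounded below).
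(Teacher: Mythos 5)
Your proposal is correct. Part (a) is essentially the paper's argument in reorganized form: the same three ingredients (the ratio $|x|^{N-2}/|x-z|^{N-2}$ close to $1$ for $|z|\le\delta|x|$, the decay $u_0\in\D_N$ to tame the kernel's singularity where $|x-z|$ is small compared to $|x|$, and the $L^1$ tail of $u_0$ elsewhere) appear there as the terms I--IV, with your $(\delta,\eta)$ playing the role of the paper's $(\gamma,\delta)$; likewise, for (b) with $u_0\in L^1(\R^N)\cap\D_N$ both you and the paper deduce the claim from (a) via the exact cancellation $(g(t))^{N(1-\frac1p)-2}\|E_N\|_{L^p(\{\nu<|x|/g(t)<\mu\})}\le C$. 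Where you genuinely diverge is the subcritical case of (b) with only $u_0\in L^1(\R^N)$: the paper keeps the pointwise decomposition of (a), observes that the decay hypothesis entered only in the term III (the region $|y|>\gamma|x|$, $|x-y|<\delta|x|$), and re-estimates that single term in $L^p$ on the annulus by H\"older's inequality, using that $|z|^{(2-N)p}$ is locally integrable precisely when $p<p_c$; you instead split $u_0=u_0\chi_{B_R}+u_0^\infty$, apply (a) to the compactly supported piece, and absorb the small-$L^1$ remainder through the weak Young inequality $L^1*M^{p_c}\subset M^{p_c}$ combined with the embedding $M^{p_c}(A)\hookrightarrow L^p(A)$ for $p<p_c$ on the annulus of measure $\asymp (g(t))^N$, whose exponents again cancel the prefactor (the companion residual $M^\infty E_N$ is handled by the same cancellation, which you should state explicitly). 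Both routes are sound: the paper's is more self-contained and quantitative, in the same elementary style as the rest of the section, while yours is more modular, reducing everything to (a) plus soft facts that the paper itself uses elsewhere (the $M^{p_c}$ convolution bound), at the mild cost of invoking the Lorentz-space embedding on finite-measure sets.
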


\begin{proof} (a) We estimate the error as $\big||x|^{N-2}\Phi(x)-M\big|\le\textrm{I}(x)+\textrm{II}(x)+\textrm{III}(x)+\textrm{IV}(x)$, where
	$$
	\begin{aligned}
	\textrm{I}(x)&=\int_{|y|<\gamma|x|}\Big|\frac{|x|^{N-2}}{|x-y|^{N-2}}-1\Big|u_0(y)\,dy,\\
	\textrm{II}(x)&=\int_{|y|>\gamma|x|}u_0(y)\,dy,\\
	\textrm{III}(x)&=|x|^{N-2}\int_{
		\scriptsize\begin{array}{c}|y|>\gamma |x|\\ |x-y|<\delta|x|\end{array}}\frac{u_0(y)}{|x-y|^{N-2}}\,dy,\\
	\textrm{IV}(x)&=|x|^{N-2}	\int_{
		\scriptsize\begin{array}{c}|y|>\gamma |x|\\|x-y|>\delta|x|\end{array}}\frac{u_0(y)}{|x-y|^{N-2}}\,dy,
	\end{aligned}
	$$
	with $\gamma,\delta>0$  to be chosen.
	
	On the one hand, if $|y|<\gamma|x|$, with $\gamma\in(0,1)$,
	\[
	\frac1{(1+\gamma)^{N-2}}\le\frac{|x|^{N-2}}{(|x|+|y|)^{N-2}}\le \frac{|x|^{N-2}}{|x-y|^{N-2}}\le\frac{|x|^{N-2}}{(|x|-|y|)^{N-2}}\le \frac1{(1-\gamma)^{N-2}}.
	\]
	Hence, $\Big|\frac{|x|^{N-2}}{|x-y|^{N-2}}-1\Big|<\varepsilon/M$ if $\gamma$ is small enough, and hence $\textrm{I}(x)\le \varepsilon$. From now on $\gamma$ is assumed to be fixed.
	
	It is immediate that $\textrm{II}(x)<\varepsilon$ if $|x|$ is large enough, since $u_0\in L^1(\mathbb{R}^N)$.
	
	On the other hand, 	using the decay of the initial datum we get
	\[
	\begin{aligned}
	\textrm{III}(x)&\le C|x|^{N-2}\int_{
		\scriptsize\begin{array}{c}|y|>\gamma |x|\\|x-y|<\delta|x|\end{array}} \frac {dy}{|y|^N|x-y|^{N-2}}\le\frac C{\gamma^N|x|^2} \int_{{|x-y|<\delta|x|}} \frac{dy}{|x-y|^{N-2}}=C\frac{\delta^2}{\gamma^N}<\varepsilon
	\end{aligned}
	\]
	if we choose $\delta>0$ small enough.
	
	Once $\gamma$ and $\delta$ are fixed,
	\[
	\textrm{IV}(x)\le \frac1{\delta^{N-2}}\int_{|y|>\gamma|x|}u_0(y)\,dy<\varepsilon,\qquad
	\]
	if $|x|$ is large enough, using once more the integrability of $u_0$.
	
	\medskip
	
	\noindent (b) The result follows immediately from (a) if $u_0\in L^1(\R^N)\cap \D_N$. Hence, we only have to remove the decay assumption in the subcritical range. This hypothesis was only used to estimate ${\rm III}$. It is easily checked that it is enough to prove that
	$$
	(g(t))^{-\frac Np}\|{\rm III}\|_{L^p(\{\nu<|x|/g(t)<\mu\})}<\varepsilon
	$$
	if $t$ is large enough.

	By H\"older's inequality, in the region under consideration
	$$
	{\rm III}(x)
	\le \mu^{N-2}(g(t))^{N-2}\left(\int_{\scriptsize\begin{array}{c}|y|>\gamma\nu g(t)\\|x-y|<\delta \mu g(t)\end{array}} u_0(y)\,dy\right)^{\frac{p-1}p}\left(\int_{\scriptsize\begin{array}{c}|y|>\gamma\nu g(t)\\|x-y|<\delta\mu g(t)\end{array}} \frac{u_0(y)}{|x-y|^{(N-2)p}}\,dy\right)^{\frac1p}.
	$$
	Therefore, since $p$ is subcritical,
	$$
	\begin{aligned}
	(g(t))^{-\frac Np}\|{\rm III}\|_{L^p(\{\nu <|x|/g(t)<\mu\})}
	&
	\le C(g(t))^{N\big(1-\frac 1p\big)-2} \left(\int_{|z|<\delta\mu g(t)} |z|^{(2-N)p}\,dz\right)^{\frac1p}\int_{|y|>\gamma\nu g(t)} u_0(y)\,dy\\
	&\le C\int_{|y|>\gamma\nu g(t)} u_0(y)\,dy,
	\end{aligned}
	$$
	which is indeed  small  if $t$ is large enough.
\end{proof}

\subsection{Large dimensions} We begin by considering the  subcritical range.
\begin{teo}\label{teo-compacto Lp subcritico}
	Let $N\ge3$,  $\kappa>0$ as in~\eqref{eq:constant.Nge2} and $u_0\in L^1(\R^N)$. For every $p\in[1,p_c)$ and $\mu >0$
	\[
	\displaystyle\|t^\a u(\cdot,t)-\kappa\Phi\|_{L^p(B_\mu)}\to0\quad\text{as }t\to\infty.
	\]
\end{teo}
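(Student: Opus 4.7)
The starting point is the identity
\[
t^\alpha u(x,t) - \kappa \Phi(x) = \int_{\mathbb{R}^N} G(y,t)\, \frac{u_0(x-y)}{|y|^{N-2}}\,dy,\qquad G(y,t) := (|y|t^{-\alpha/2})^{N-2} F(yt^{-\alpha/2}) - \kappa,
\]
obtained by writing $t^\alpha Z(y,t) = |y|^{2-N}\bigl((|y|t^{-\alpha/2})^{N-2} F(yt^{-\alpha/2})\bigr)$ in the convolution formula $u=Z\ast u_0$ and using $\kappa\Phi(x) = \int \kappa |y|^{2-N} u_0(x-y)\,dy$. Two features of $G$ will do the job: the sharp error bound~\eqref{eq:estimate.error.behaviour.profile.N=3} gives $|G(y,t)|\le C|y|t^{-\alpha/2}$, so $G(y,t)\to 0$ as $|y|t^{-\alpha/2}\to 0$; and, combining~\eqref{eq:estimate.error.behaviour.profile.N=3} on $|\xi|\le 1$ with~\eqref{eq:bounds.F.origin} on $|\xi|\ge 1$, the function $G$ is uniformly bounded in $(y,t)$.

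Paralleling the proof of Theorem~\ref{teo-intermedio sin orden}, I would split the integral at $|y|=\delta t^{\alpha/2}$ for a small $\delta>0$ to be chosen. In the inner region $|y|\le\delta t^{\alpha/2}$ the previous estimate gives $|G(y,t)|\le C\delta$, so the contribution is pointwise dominated by $C\delta\,\Phi(x)$; since $\Phi\in L^p(B_\mu)$ for every $p<p_c$ by the Newtonian-potential lemma just proved, this piece has $L^p(B_\mu)$-norm at most $C\delta\|\Phi\|_{L^p(B_\mu)}$. In the outer region $|y|>\delta t^{\alpha/2}$, the uniform bound on $G$ together with $|y|^{2-N}\le(\delta t^{\alpha/2})^{2-N}$ and mass conservation yield the pointwise estimate $CM\delta^{2-N}t^{-\alpha(N-2)/2}$, whose $L^p(B_\mu)$-norm tends to $0$ as $t\to\infty$ for each fixed $\delta$.

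The standard $\varepsilon/\delta/t$ choice then closes the argument: given $\varepsilon>0$, first pick $\delta$ small so that the inner contribution is below $\varepsilon$, and then take $t$ large enough (depending on $\delta$) to make the outer contribution also below $\varepsilon$. I do not expect any real obstacle; the only mildly delicate point is the appeal to $\Phi\in L^p(B_\mu)$ in the full subcritical range, which is exactly what the preceding lemma guarantees. As a shortcut one could instead use $|G(y,t)|\le C|y|t^{-\alpha/2}$ globally to obtain the one-line bound $|t^\alpha u-\kappa\Phi|\le Ct^{-\alpha/2}\,(|\cdot|^{3-N}\ast u_0)$ and then invoke, via Young's inequality, the $L^p$-boundedness of the Riesz-type potential $|\cdot|^{3-N}\ast u_0$ on $B_\mu$ whenever $p<N/(N-3)$, a range that again contains $p<p_c$.
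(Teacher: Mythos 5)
Your proof is correct, and it takes a genuinely different route from the paper's, although both rest on the same two facts about the profile: the error bound \eqref{eq:estimate.error.behaviour.profile.N=3} and the global bound $F(\xi)\le C|\xi|^{2-N}$. The paper cuts the convolution at a slowly growing radius $K(t)\to\infty$, $K(t)=o(t^{\alpha/2})$: where the kernel variable is of size at most $\sim K(t)$ it uses \eqref{eq:estimate.error.behaviour.profile.N=3} together with a Young/Minkowski bound for the truncated potential, paying a factor $K(t)^{\frac{N}{p}-(N-3)}$, and where it is larger than $\sim K(t)$ it uses the full mass $M$ and the kernel smallness $K(t)^{2-N}$; the two pieces must then be balanced through the choice of $K(t)$, which is why the paper's remark only records the non-sharp rate $O\big(t^{-\frac{\alpha}{2}\frac{(N-2)p}{N+p}}\big)$. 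You instead cut at the self-similar scale $|y|=\delta t^{\alpha/2}$ with $\delta$ fixed, control the inner piece by $C\delta\,\Phi(x)$ and invoke the preceding lemma giving $\Phi\in L^p(B_\mu)$ for $p<p_c$ (a result the paper proves but does not use in its own proof of this theorem), and bound the outer piece by $CM\delta^{2-N}t^{-\alpha(N-2)/2}$; no balancing is needed. Your one-line variant is even cleaner and yields the explicit rate $O(t^{-\alpha/2})$, stronger than, and consistent with, the rate in the paper's remark. The only small imprecision is in that shortcut: for $N\ge4$ the kernel $|\cdot|^{3-N}$ does not belong to $L^p(\mathbb{R}^N)$ globally when $p<N/(N-3)$, so a single application of Young's inequality is not enough; split the kernel at $|y|=1$, use Young for the near part and the bound $|y|^{3-N}\le 1$ together with $\|u_0\|_{L^1(\mathbb{R}^N)}=M$ for the far part (for $N=3$ the potential is simply the constant $M$), exactly as in the proof that $\Phi\in L^p_{\rm loc}$. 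With that understood, both your main argument and the shortcut are complete.
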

\begin{proof}
In order to  estimate the error, we take $K(t)$ going to infinity to be specified later and make the decomposition
\begin{equation*}
\label{eq:decomposition.error.compacts}
\begin{aligned}
|t^\a u(x,t)&-\kappa\Phi(x)|=
\Big|t^\a \int Z(y,t)u_0(x-y)\,dy-\kappa\int\frac{u_0(x-y)}{|y|^{N-2}}\,dy\Big|\\
\le& \int_{|x-y|<K(t)}u_0(x-y)\Big|t^{-\frac\alpha2(N-2)}F(yt^{-\alpha/2})-\frac\kappa{|y|^{N-2}}\big|\,dy\\
&+\int_{|x-y|>K(t)}u_0(x-y)t^{-\frac\alpha2(N-2)}F(yt^{-\alpha/2})\,dy+\kappa\int_{|x-y|>K(t)}\frac{u_0(x-y)}{|y|^{N-2}}\,dy.
\end{aligned}
\end{equation*}
Since $F(\xi)\le C|\xi|^{2-N}$, then $t^{-\frac\alpha2(N-2)}F(yt^{-\alpha/2})\le C|y|^{2-N}$, that  together with~\eqref{eq:estimate.error.behaviour.profile.N=3} yields
\begin{equation}
	\label{eq:estimate.A.B}
	\begin{aligned}
	|t^\a u(x,t)-\kappa\Phi(x)|&\le \mathcal{A}(x,t)+\mathcal{B}(x,t),\quad\text{where }
	\\
	\mathcal{A}(x,t)&=  C_{N,\a}\int_{|x-y|<K(t)}\frac{u_0(x-y)}{|y|^{N-2}}|y|t^{-\alpha/2}\,dy,\\
	\mathcal{B}(x,t)&=C_{N,\alpha}\int_{|y|>K(t)}\frac{u_0(y)}{|x-y|^{N-2}}\,dy,
	\end{aligned}
	\end{equation}
On the one hand,
	\[
	\begin{aligned}
	\|\mathcal{A}(\cdot,t)\|_{L^p(B_\mu)}&\le C\Big(\int_{|x|<\mu}\Big(\int_{|x-y|<K(t)}\frac{u_0(x-y)}{|y|^{N-3}}t^{-\a/2}\,dy\Big)^p\,dx\Big)^{1/p}\\
	&\le Mt^{-\a/2}\Big(\int_{|y|<2K(t)}\frac{1}{|y|^{(N-3)p}}\,dy\Big)^{1/p}\le C  Mt^{-\a/2}K(t)^{\frac Np-(N-3)}.
	\end{aligned}
	\]
We choose $K(t)$ going to infinity slowly enough, so that $t^{-\a/2}K(t)^{\frac Np-(N-3)}\to0$.
	
	On the other hand, let $t$ be large so that $\mu<K(t)/2$. Then, if $|y|>K(t)$ we have $|x-y|>K(t)/2$, so that
	\[
	\|\mathcal{B}(\cdot,t)\|_{L^p(B_\mu)}\le C_{N,\alpha} \Big(\frac2{K(t)}\Big)^{N-2}|B_\mu|^{1/p}M\to 0\quad\text{as }t\to\infty
	\]
since $K(t)$ goes to infinity.
\end{proof}

\noindent\emph{Remark. }  The choice $K(t)=t^{\frac\a2\frac p{N+p}}$ in the above proof yields the estimate (not expected to be sharp)
$$
\displaystyle\|t^\a u(\cdot,t)-\kappa\Phi\|_{L^p(B_\mu)}=O\Big(t^{-\frac{\a }2\frac{(N-2)p}{N+p}}\Big).
$$

\medskip

We now devote our attention to the critical case  $p=p_c$.  Without further assumptions on the initial data, we have the same result as in the critical range, but in the weak $M^{p_c}$ norm. As a novelty, convergence is not restricted to compact sets, and can be extended to expanding balls $ B_{g(t)}$ with $g$ growing slowly to infinity. These results are also valid in $L^{p_c}$ norm if $u_0\in   L^1(\R^N)\cap L^{p_c}_{\rm loc}(\R^N)$.

\begin{teo}\label{te0-compact-critical} Let $N\ge 3$,  $\kappa>0$ as in~\eqref{eq:constant.Nge2}, and $p=p_c$.  Let   $g$ satisfy~\eqref{eq:intermediate.scales}.
	\begin{itemize}
		\item [\rm (a)] If $u_0\in L^1(\R^N)$, then $\Phi\in M^{p_c}(\R^N)$ and,
		\[
		\big\|t^\a u(\cdot,t)-\kappa\Phi\big\|_{M^{p_c}(B_{g(t)})}\to0\quad\mbox{as } t\to\infty.
		\]
		\item [\rm (b)] If moreover $u_0\in   L^1(\R^N)\cap L^{p_c}_{\rm loc}(\R^N)$, then  $\Phi\in L^{p_c}_{\rm loc}(\R^N)$ and
		\[
		\|t^\a u(\cdot,t)-\kappa\Phi\|_{L^{p_c}(B_{g(t)})}\to0\quad\mbox{as }t\to\infty.
		\]
	\end{itemize}
\end{teo}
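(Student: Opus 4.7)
The plan is to follow the strategy of the proof of Theorem~\ref{teo-compacto Lp subcritico}, starting from the pointwise bound $|t^{\alpha}u(x,t)-\kappa\Phi(x)|\le\mathcal{A}(x,t)+\mathcal{B}(x,t)$ established in~\eqref{eq:estimate.A.B}, and showing that each of $\mathcal{A}$ and $\mathcal{B}$ tends to zero in $L^{p_c}(B_{g(t)})$. The $M^{p_c}$ convergence in part~(a) will then come for free from the local embedding $L^{p_c}\hookrightarrow M^{p_c}$. The well-definedness of the limit $\Phi$ in the stated space is supplied by the potential-theoretic lemmas proved just above: $\Phi\in M^{p_c}(\R^N)$ under $u_0\in L^1(\R^N)$ for (a), and $\Phi\in L^{p_c}_{\rm loc}(\R^N)$ under the additional assumption $u_0\in L^{p_c}_{\rm loc}(\R^N)$ for (b). Throughout I take the coupled cut-off $K(t)=2g(t)$, which tends to infinity and satisfies $K(t)=o(t^{\alpha/2})$ by~\eqref{eq:intermediate.scales}.

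For the tail $\mathcal{B}(x,t)=C\int_{|y|>K(t)}u_0(y)|x-y|^{2-N}\,dy$, the localization does the work: when $x\in B_{g(t)}$ and $|y|>2g(t)$, then $|x-y|\ge|y|-|x|>g(t)$, whence $|x-y|^{2-N}\le g(t)^{2-N}$. This yields the uniform bound $\mathcal{B}(x,t)\le Cg(t)^{2-N}\int_{|y|>K(t)}u_0(y)\,dy$ on $B_{g(t)}$, and multiplication by the volume factor $|B_{g(t)}|^{1/p_c}\sim g(t)^{N-2}$ gives $\|\mathcal{B}(\cdot,t)\|_{L^{p_c}(B_{g(t)})}\le C\int_{|y|>K(t)}u_0(y)\,dy\to 0$ since $u_0\in L^1$ and $K(t)\to\infty$. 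For the main term, after the change of variable $z=x-y$, $\mathcal{A}(x,t)=Ct^{-\alpha/2}\int_{|z|<K(t)}u_0(z)|x-z|^{3-N}\,dz$. The kernel $|\cdot|^{3-N}$ does not lie in $L^{p_c}(\R^N)$ globally, so I split the domain of integration by $|x-z|<g(t)$ versus $|x-z|\ge g(t)$. For the near piece, Young's inequality with the truncated kernel $|\cdot|^{3-N}\chi_{B_{g(t)}}\in L^{p_c}$ (norm of order $g(t)$, since $(N-3)p_c<N$) produces an $L^{p_c}(\R^N)$ bound of order $Mg(t)$. For the far piece, the pointwise estimate $|x-z|^{3-N}\le g(t)^{3-N}$ gives a uniform bound of order $Mg(t)^{3-N}$ on $B_{g(t)}$, hence an $L^{p_c}(B_{g(t)})$ contribution of order $Mg(t)^{3-N}\cdot g(t)^{N-2}=Mg(t)$. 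Together these yield $\|\mathcal{A}(\cdot,t)\|_{L^{p_c}(B_{g(t)})}\le CMt^{-\alpha/2}g(t)\to 0$ under the assumption $g(t)=o(t^{\alpha/2})$.

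The main obstacle is that the critical kernels $|\cdot|^{2-N}$ and $|\cdot|^{3-N}$ sit at the borderline of $L^{p_c}$ integrability, so convolution bounds come with either a weak-space loss or a local-volume penalty. The device that rescues both estimates is the coupling $K(t)\asymp g(t)$: for $\mathcal{B}$ it makes the kernel uniformly bounded on the integration region by $g(t)^{2-N}$, while for $\mathcal{A}$ the common threshold $g(t)$ balances the near-Young piece against the far pointwise piece. This coupling is what permits the localization ball $B_{g(t)}$ to be expanding rather than compact, with the resulting growth $g(t)$ subsequently absorbed by the $t^{-\alpha/2}$ factor in $\mathcal{A}$ via the characteristic-scale constraint $g(t)=o(t^{\alpha/2})$.
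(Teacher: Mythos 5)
Your proof is correct, and for part (b) it is essentially the paper's argument: the same pointwise bound \eqref{eq:estimate.A.B} with the coupled cut-off $K(t)=2g(t)$, the same tail estimate for $\mathcal{B}$ (the pointwise bound $g(t)^{2-N}$ times the $L^1$ tail, then the volume factor $g(t)^{N-2}$), and for $\mathcal{A}$ the paper reaches the same bound $CMt^{-\a/2}g(t)$ in one stroke by noting that $|x|<g(t)$ and $|x-y|<2g(t)$ force $|y|<3g(t)$ and applying Minkowski/Young with the kernel $|y|^{3-N}\chi_{B_{3g(t)}}$, whereas you split near/far at $|x-z|=g(t)$ --- a cosmetic difference. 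Where you genuinely diverge is part (a): the paper proves it directly in the weak norm, bounding $\mathcal{A}(x,t)\le Cg(t)t^{-\a/2}\Phi(x)$ and invoking $\|\Phi\|_{M^{p_c}(\R^N)}$, and treating $\mathcal{B}$ by the weak-type Young inequality with $\|E_N\|_{M^{p_c}(\R^N)}$; you instead deduce (a) from the strong bound via $\|\cdot\|_{M^{p_c}}\le\|\cdot\|_{L^{p_c}}$. This is legitimate, but it should be phrased as a consequence of the $L^{p_c}(B_{g(t)})$ bound on the error $\mathcal{A}+\mathcal{B}$ --- which, as your own estimates show, uses only $\|u_0\|_{L^1}$ and its tails --- rather than as a consequence of part (b), whose statement carries the extra hypothesis $u_0\in L^{p_c}_{\rm loc}(\R^N)$ that is not available in (a). With that rewording your route is in fact slightly more economical than the paper's: one set of strong-norm estimates covers both parts, and it shows the error itself is small in $L^{p_c}$ even for merely integrable data (although then neither $t^\a u(\cdot,t)$ nor $\kappa\Phi$ separately need belong to $L^{p_c}_{\rm loc}$), while the paper's weak-norm proof of (a) keeps both terms inside $M^{p_c}$, where they naturally live under the sole assumption $u_0\in L^1(\R^N)$.
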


\begin{proof} (a)
	To prove convergence in $M^{p_c}(B_{g(t)})$, we start from the size estimate~\eqref{eq:estimate.A.B}, with $K(t)=2g(t)$.
	Since  $|y|<3g(t)$ if $|x|<g(t)$ and  $|x-y|<2g(t)$,  we get
	\[	
	\|\mathcal{A}(\cdot,t)\|_{M^{p_c}(B_{g(t)})}\le  C \|\Phi\|_{M^{p_c}(\R^N)}g(t)t^{-\a/2}<\ep\quad\mbox{if }t\ge t_\ep.
	\]
	
	On the other hand, since $|x-y|>2|x|$ if $|x-y|>2g(t)$ and $|x|<g(t)$, then
	\[
	\|\mathcal{B}(\cdot,t)\|_{M^{p_c}(B_{g(t)})}\le C \|E_N\|_{M^{p_c}(\R^N)}\int_{|y|>g(t)}u_0(y)\,dy<\ep\quad\mbox{if }t\ge t_\ep.
	\]
		
\noindent (b) To prove convergence in $L^{p_c}(B_{g(t)})$, we start from the size estimate~\eqref{eq:estimate.A.B}, with $K(t)=2g(t)$.
Since  $|y|<3g(t)$ if $|x|<g(t)$ and  $|x-y|<2g(t)$,  we get
\[
\|\mathcal{A}(\cdot,t)\|_{L^{p_c}(B_{g(t)})}\le Ct^{-\a/2}M\Big(\int_{|y|<3g(t)}\frac{dy}{|y|^{(N-3)p_c}}\Big)^{1/p_c}=
Ct^{-\a/2}M g(t).
\]

On the other hand,  if $|y|>2g(t)$  and $|x|<g(t)$, then  $|x-y|>g(t)$. Hence,
$$
\mathcal{B}(x,t)\le C(g(t))^{2-N}\int_{|y|>2g(t)}u_0(y)\,dy\quad \text{for } |x|<g(t),
$$
and therefore $
\|\mathcal{B}(\cdot,t)\|_{L^{p_c}(B_{g(t)})}\le {C}\int_{|y|>2g(t)}u_0(y)\,dy$, which ends the proof.
\end{proof}

When  $p$ is supercritical, we have to ask the initial datum to be also in $L^p_{\rm loc}(\R^N)$ in order for $\Phi$ and $u$ to be in that space, but under that assumption we have the same result as in the subcritical range.
\begin{teo}\label{teo-compacto Lp}
Let $N\ge3$,  $\kappa>0$ as in~\eqref{eq:constant.Nge2}, and $p\in(p_c,\infty]$. Given  $u_0\in L^1(\R^N)\cap L^p_{\rm loc}(\R^N)$ and $\mu>0$,
	\[
	\begin{array}{l}
	\displaystyle\|t^\a u(\cdot,t)-\kappa\Phi\|_{L^p(B_\mu)}\to0\quad\text{as }t\to\infty.
	\end{array}
	\]
\end{teo}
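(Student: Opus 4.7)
The plan is to mirror the proof of Theorem~\ref{teo-compacto Lp subcritico}: introduce a cutoff $K(t)\to\infty$ and use the profile estimate~\eqref{eq:estimate.error.behaviour.profile.N=3} to obtain the same decomposition~\eqref{eq:estimate.A.B}, namely $|t^\alpha u(x,t)-\kappa\Phi(x)|\le\mathcal{A}(x,t)+\mathcal{B}(x,t)$, with $\mathcal{A}$ collecting the contribution of $|x-y|<K(t)$ and $\mathcal{B}$ that of $|y|>K(t)$. It then suffices to show that both terms vanish in $L^p(B_\mu)$ as $t\to\infty$.

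The estimate for $\mathcal{B}$ carries over verbatim. Choosing $K(t)>2\mu$, the conditions $|x|<\mu$ and $|y|>K(t)$ yield $|x-y|>K(t)/2$, whence
\[
\mathcal{B}(x,t)\le C\,K(t)^{2-N}\int_{|y|>K(t)}u_0(y)\,dy\le CM\,K(t)^{2-N},
\]
and hence $\|\mathcal{B}(\cdot,t)\|_{L^p(B_\mu)}\to 0$. The real difficulty lies in $\mathcal{A}$: the subcritical argument effectively places the weight $|y|^{3-N}$ in an $L^p$ slot, but this weight is in $L^p_{\rm loc}$ only when $(N-3)p<N$, a condition that fails throughout most of the supercritical range.

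To handle $\mathcal{A}$ for every $p\in(p_c,\infty]$, I would split it according to $|y|<1$ or $|y|\ge 1$. On the second piece, $|y|^{3-N}\le 1$, so
\[
\mathcal{A}_2(x,t)\le Ct^{-\alpha/2}\int u_0(x-y)\,dy\le CMt^{-\alpha/2},
\]
and its $L^p(B_\mu)$ norm vanishes. On the first piece, $|x|<\mu$ and $|y|<1$ force $|x-y|\le\mu+1$, so the cutoff $|x-y|<K(t)$ is automatic for $t$ large and only the values of $u_0$ on $B_{\mu+1}$ enter. Setting $h(y):=|y|^{3-N}\chi_{B_1}(y)$, we recognize
\[
\mathcal{A}_1(x,t)=Ct^{-\alpha/2}\bigl((u_0\chi_{B_{\mu+1}})*h\bigr)(x)\quad\text{for }|x|<\mu.
\]
Since $h\in L^1(\R^N)$ (the singularity $|y|^{3-N}$ is integrable at the origin in any dimension $N\ge 3$) and $u_0\chi_{B_{\mu+1}}\in L^p(\R^N)$ by the local $L^p$ assumption, Young's convolution inequality $L^p*L^1\to L^p$, valid also at the endpoint $p=\infty$, yields
\[
\|\mathcal{A}_1(\cdot,t)\|_{L^p(B_\mu)}\le Ct^{-\alpha/2}\,\|u_0\|_{L^p(B_{\mu+1})}\,\|h\|_{L^1(\R^N)}\longrightarrow 0.
\]

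The main technical obstacle is precisely this swap of roles: the kernel $|y|^{3-N}$ must be absorbed into the $L^1$ factor of Young's inequality rather than the $L^p$ factor exploited in the subcritical proof. The separation at $|y|=1$ is what makes this feasible, since the large-$|y|$ tail is controlled trivially by $|y|^{3-N}\le 1$ and the small-$|y|$ kernel $h$ has an integrable singularity in every dimension $N\ge 3$.
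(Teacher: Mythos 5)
Your proof is correct. It follows the paper's skeleton --- the decomposition~\eqref{eq:estimate.A.B} with a cutoff $K(t)\to\infty$, and the far term $\mathcal{B}$ killed by the observation that $|x|<\mu$, $|y|>K(t)$ force $|x-y|>K(t)/2$ --- but your treatment of $\mathcal{A}$ is genuinely different. The paper notes that on the domain of $\mathcal{A}$ one has $|y|<3K(t)/2$, hence $\mathcal{A}(x,t)\le C\,K(t)\,t^{-\alpha/2}\Phi(x)$, and then invokes the previously established local bound~\eqref{eq:estimates.Phi} on $\|\Phi\|_{L^p(B_\mu)}$; this forces the extra requirement $K(t)=o(t^{\alpha/2})$. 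You instead split the kernel $|y|^{3-N}$ at $|y|=1$, applying Young's inequality $L^p*L^1\to L^p$ to the integrable near singularity and the trivial bound $|y|^{3-N}\le1$ to the far part. This is essentially the same near/far mechanism by which the paper proves~\eqref{eq:estimates.Phi} itself, so you have in effect inlined that lemma with the kernel $|y|^{3-N}$ in place of $|y|^{2-N}$; what it buys you is a marginally sharper bound $\mathcal{A}=O(t^{-\alpha/2})$ in place of $O(K(t)t^{-\alpha/2})$ and the freedom to let $K$ grow arbitrarily fast, though the overall rate is in any case governed by $\mathcal{B}$. Your diagnosis of why the subcritical argument breaks down --- $|y|^{3-N}\notin L^p_{\rm loc}$ once $(N-3)p\ge N$, so the two slots in Young's inequality must be swapped --- is exactly the right point, and your cruder bound $\|\mathcal{B}(\cdot,t)\|_{L^p(B_\mu)}\le CMK(t)^{2-N}|B_\mu|^{1/p}$ suffices on a fixed ball even though the paper uses a finer Minkowski estimate.
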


\begin{proof} We use the estimate~\eqref{eq:estimate.A.B} with $K$ satisfying~\eqref{eq:intermediate.scales}. For all $t$ large enough $|x|\le \mu <K(t)/2$. Hence,  $|x-y|<K(t)$ implies $|y|<3K(t)/2$. Therefore,
\[
	\mathcal{A}(x,t)\le
	C_{N,\a} K(t) t^{-\alpha/2}\Phi(x),
\]
which combined with~\eqref{eq:estimates.Phi} yields
	\[
	\|\mathcal{A}(\cdot,t)\|_{L^p( B_\mu)}\le C_{N,p,\a} K(t) t^{-\alpha/2}\big(\|u_0\|_{L^p(B_{\mu+1})}+M\big)
	\to0\quad\mbox{as }t\to\infty.
	\]
On the other hand, if $|y|>K(t)$, then  $|x-y|\ge |y|-|x|\ge K(t)/2$, and hence
	\[\begin{aligned}
	\|\mathcal{B}(\cdot,t)\|_{L^p(B_\mu)}&\le C_{N,\alpha}\int_{|y|>K(t)}u_0(y)\Big(\int_{|x|<K(t)/2}\frac{dx}{|x-y|^{(N-2)p}}\Big)^{1/p}\,dy\\
	&\le C_{N,p,\a}MK(t)^{\frac Np-(N-2)}\to0\quad\mbox{as }t\to\infty
	\end{aligned}
	\]
	since  $(N-2)p-N>0$.
\end{proof}	
	
\noindent\emph{Remarks. }  (a) The choice $K(t)=t^{\frac\a2\frac p{N+p}}$ in the above proof yields the (not necessarily sharp) estimate
$$
\displaystyle\|t^\a u(\cdot,t)-\kappa\Phi\|_{L^p(B_\mu)}=O\Big( t^{-\frac{\a }2\frac{(N-2)p-N}{(N-1)p-N}}\Big).
$$	

\noindent (b) If $u_0\in L^1(\R^N)\cap L^p(\R^N)$, $p\in(p_c,\infty]$, and $g$ satisfies~\eqref{eq:intermediate.scales}, it is easily checked, taking $K(t)=2g(t)$ in the proof of Theorem~\ref{teo-compacto Lp}, that
\[
\|t^\a u(\cdot,t)-\kappa\Phi\|_{L^{p}(B_{g(t)})}\to0\quad\mbox{as }t\to\infty.
\]
However, this result is only sharp in compact sets.

\subsection{Low dimensions}

In the critical dimension $N=2$ the  decay rate is not given by $t^{-\alpha}$; it has a logarithmic correction that makes the decay a bit slower. Moreover, the asymptotic profile is not given by the Newtonian potential of the initial datum, but by a constant. The same asymptotic constant gives the profile in expanding balls $B_{g(t)}$ as long as the decay rate is the same as in compact sets, which is the case as long as $\log g(t)/\log t\to0$.
\begin{teo}\label{teo-compactos N=2 todo p} Let $N=2$,  $\kappa>0$ as in~\eqref{eq:constant.Nge2}, and  $u_0\in L^1(\R^2)$.
	\begin{itemize}
		\item[(a)] Assume in addition that $u_0\in L^q_{\rm loc}(\R^2)$ for some $q\in (1,\infty]$ if $p=\infty$. Then, for every $p\in[1,\infty]$ and  $\mu>0$,
		\[
		\Big\|\frac{t^\a}{\log t}u(\cdot,t)-\frac{M \kappa\a}2\Big\|_{L^p(B_\mu)}\to0\quad\mbox{as } t\to\infty.
		\]
		\item[(b)] Assume in addition that $u_0\in L^q(\R^2)$ for some some $q\in (1,\infty]$ if $p=\infty$. Then, for every $p\in[1,\infty]$ and $g:\mathbb{R}_+\to\mathbb{R}_+$ such that $g(t)\to\infty$ and  $\log g(t)/\log t\to0$ as $t\to\infty$,
		\[
		(g(t))^{-\frac2p}\Big\|\frac{t^\a}{\log t}u(\cdot,t)-\frac{M \kappa\a}2\Big\|_{L^\infty(B_{g(t)})}\to0\quad\mbox{as } t\to\infty.
		\]
	\end{itemize}
\end{teo}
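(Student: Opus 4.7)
The starting point is the representation formula combined with the self-similarity of $Z$, which in dimension $N=2$ yields
\begin{equation*}
\frac{t^\alpha}{\log t}u(x,t)-\frac{M\kappa\alpha}{2}=\int\Big[\frac{F(yt^{-\alpha/2})}{\log t}-\frac{\kappa\alpha}{2}\Big]u_0(x-y)\,dy.
\end{equation*}
The plan is to introduce an auxiliary scale $K(t)$ satisfying $K(t)\to\infty$, $K(t)=o(t^{\alpha/2})$, and $\log K(t)/\log t\to 0$, and to split the integral according to $|y|<K(t)$ versus $|y|\ge K(t)$. For part~(b) one additionally requires $K(t)\ge 2g(t)$; this is compatible with the three previous conditions because the hypothesis $\log g(t)/\log t\to 0$ forces $g(t)=t^{o(1)}$, so that a choice such as $K(t)=2g(t)+\log t$ works.

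On the inner region $\{|y|<K(t)\}$, the asymptotic $F(\xi)/E_2(\xi)\to\kappa$ from~\eqref{eq:constant.Nge2}, applied uniformly to $\xi=yt^{-\alpha/2}\to 0$, gives
\begin{equation*}
\frac{F(yt^{-\alpha/2})}{\log t}-\frac{\kappa\alpha}{2}=-\frac{\kappa\log|y|}{\log t}+o(1).
\end{equation*}
After multiplying by $u_0(x-y)$ and integrating, the $o(1)$ piece is harmless since $u_0\in L^1(\mathbb{R}^2)$. The contribution of $-\kappa\log|y|/\log t$ is split at $|y|=1$: on $\{1\le|y|<K(t)\}$ we use $|\log|y||\le\log K(t)$, producing a contribution bounded by $\kappa M\log K(t)/\log t\to 0$; on $\{|y|<1\}$ we are left with the fixed convolution $\int_{|y|<1}|\log|y||u_0(x-y)\,dy$ divided by $\log t$, whose $L^p$ norm on any ball is controlled by Young's inequality when $p$ is finite, and by H\"older's inequality against $u_0\in L^q_{\rm loc}$ with $q>1$ when $p=\infty$ (using $|\log|y||\in L^{q'}_{\rm loc}(\mathbb{R}^2)$). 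The offset $\frac{\kappa\alpha}{2}$ is accurate because $\int_{|y|<K(t)}u_0(x-y)\,dy\to M$ uniformly for $x$ in $B_\mu$ (respectively in $B_{g(t)}$, thanks to $K(t)\ge 2g(t)$).

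On the outer region we split once more at $|y|=t^{\alpha/2}$: on $\{K(t)\le|y|<t^{\alpha/2}\}$ the bound $F(\xi)\le C|\log|\xi||$ from~\eqref{eq:global.bounds.F} implies $F(yt^{-\alpha/2})/\log t\le C$, so the contribution is at most $C\int_{|y|>K(t)}u_0(x-y)\,dy\to 0$ uniformly in $x$; on $\{|y|>t^{\alpha/2}\}$ the exponential bound~\eqref{eq:bounds.F.origin} forces the contribution to be $O(1/\log t)$. The piece $\frac{\kappa\alpha}{2}\int_{|y|>K(t)}u_0(x-y)\,dy$ is handled in the same way. The main obstacle is precisely the $\{|y|<1\}$ piece of the $\log|y|$ term: since $|\log|y||$ is unbounded near the origin, reaching $L^\infty$ convergence is only possible under some extra local integrability of $u_0$, which explains the hypothesis $u_0\in L^q_{\rm loc}$ (respectively $L^q(\mathbb{R}^2)$) with $q>1$ imposed in parts~(a) and~(b) when $p=\infty$.
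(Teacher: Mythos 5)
Your proof is essentially the paper's: same representation formula, same inner/outer splitting, the asymptotics~\eqref{eq:constant.Nge2} producing the constant $\frac{M\kappa\alpha}{2}$ plus a $-\kappa\log|y|/\log t$ correction, the further split of that correction at $|y|=1$ and at an intermediate scale (your $K(t)$, the paper's $h(t)$) with Young's inequality for finite $p$ and H\"older against $u_0\in L^q_{\rm loc}$ (resp.\ $L^q(\R^2)$) for $p=\infty$ to tame the logarithmic singularity, and boundedness of $F$ away from the origin for the outer region; the paper merely cuts the inner region at $\delta t^{\alpha/2}$ and compares $F(\xi)$ with $\kappa(|\log|\xi||+1)$ multiplicatively rather than cutting at $K(t)$ and comparing additively. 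One inaccuracy to fix: on $\{|y|<K(t)\}$ the error term in your display is exactly $(F(\xi)-\kappa|\log|\xi||)/\log t$ with $\xi=yt^{-\alpha/2}$, and since \eqref{eq:constant.Nge2} only yields $F(\xi)-\kappa|\log|\xi||=o(|\log|\xi||)$ as $\xi\to0$ (no bounded-error version is available for $N=2$), this error is $o(1)\big(1+|\log|y||/\log t\big)$, not uniformly $o(1)$; it blows up as $y\to0$ for fixed $t$, so it cannot be dismissed merely because $u_0\in L^1$. The slip is harmless: the offending piece is $o(1)$ times the same convolution $\frac1{\log t}\int_{|y|<1}|\log|y||\,u_0(x-y)\,dy$ that you already control by Young/H\"older, so the argument closes with a one-line amendment.
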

\begin{proof} (a) Let $\delta>0$ to be chosen. Then, for $t>1$,
	\begin{equation}
	\label{eq:decomposition.error.N=2.bounded}
	\begin{aligned}
	\Big|\frac{t^\a}{\log t} u(x,t)-&\frac{M \kappa\a}2\Big|
	\le{\rm I}(x,t)+{\rm II}(x,t)+{\rm III}(x,t),\quad\text{where }\\
	{\rm I}(x,t)&=\frac{\a}2\int_{|y|<\delta t^{\a/2}}\Big|\frac{F(yt^{-\a/2})}{|\log(|y|t^{-\a/2})|+1}-\kappa\Big| u_0(x-y)\,dy\\
	{\rm II}(x,t)&= \int_{|y|<\delta t^{\a/2}}\frac{F(yt^{-\a/2})}{|\log(|y|t^{-\a/2})|+1}\frac{\big||\log(|y|t^{-\a/2})|+1-\frac{\a}2\log t\big|}{\log t}u_0(x-y)\,dy,\\
	{\rm III}(x,t)&=\int_{|y|>\delta t^{\a/2}}\Big|\frac{t^\a}{\log t} Z(y,t)-\frac{\kappa\a}2\Big|u_0(x-y)\,dy.
	\end{aligned}
	\end{equation}

	Using the asymptotic behavior~\eqref{eq:constant.Nge2} we get ${\rm I}(x,t)\le CM\ep$
	if we choose $\delta$ small enough, so that $\|{\rm I(\cdot,t)}\|_{L^p(B_\mu)}\le C M\ep$.
	From now on $\delta$, which may be assumed to be smaller than 1, will be fixed.
	
	Let $h:\mathbb{R}_+\to \mathbb{R}_+$ be such that $h(t)\to\infty$ and $\frac{\log h(t)}{\log t}\to0$. We take $t> 1$ large, so that $\max\{1/2,\mu\}< h(t)<\delta t^{\alpha/2}/2$. After using the bound~\eqref{eq:global.bounds.F}, we  make the decomposition
	\begin{equation}
	\label{eq:decomposition.II}
	\begin{aligned}
	{\rm II}(x,t)&\le C \int_{|y|<\delta t^{\a/2}}\frac{|\log|y||+1}{\log t}u_0(x-y)\,dy= {\rm II}_1(x,t)+{\rm II}_{2}(x,t)+{\rm II}_{3}(x,t),\quad\text{where }\\
	{\rm II}_1(x,t)&= C \int_{2h(t)<|y|<\delta t^{\a/2}}\frac{|\log|y||+1}{\log t}u_0(x-y)\,dy,\\
	{\rm II}_2(x,t)&=	C \int_{1<|y|<2h(t)}\frac{|\log|y||+1}{\log t}u_0(x-y)\,dy,\\
	{\rm II}_3(x,t)&=C \int_{|y|<1}\frac{|\log|y||+1}{\log t}u_0(x-y)\,dy.
	\end{aligned}
	\end{equation}
	Since $|x|<\mu<h(t)$, then $|x-y|>h(t)$ if $|y|>2h(t)$. Hence, using also that $2h(t)>1$,
	\[
	\begin{aligned}
	{\rm II}_{1}(x,t)&\le C \int_{2h(t)<|y|<\delta t^{\a/2}}\frac{|\log(\delta t^{\a/2})|+1}{\log t}u_0(x-y)\,dy\le C\int_{|z|>h(t)}u_0(z)\,dz,\\
	{\rm II}_{2}(x,t)&\le C \int_{1<|y|<2h(t)}\frac{|\log (2h(t))|+1}{\log t}u_0(x-y)\,dy\le CM\frac{\log h(t)}{\log t},
	\end{aligned}
	\]
	so that
	\[
	\|{\rm II}_{1}(\cdot,t)\|_{L^p(B_\mu)}\le C\int_{|z|>h(t)}u_0(z)\,dz,\qquad	\|{\rm II}_{2}(\cdot,t)\|_{L^p(B_\mu)}\le CM\frac{\log h(t)}{\log t}.
	\]
	
	As for  ${\rm II}_{3}$,
	\[
	\|{\rm II}_{3}(\cdot,t)\|_{L^p(B_\mu)}\le\begin{cases}  \displaystyle\frac{CM (1+\|E_2\|_{L^{p}(B_1)})}{\log t}
	\quad &\text{if }p\in [1, \infty),\\[10pt]
	\displaystyle\frac{C\|u_0\|_{L^q(B_{\mu+1})} (1+\|E_2\|_{L^{q'}(B_1)})}{\log t}\quad &\text{if }p=\infty.
	\end{cases}
	\]

	Finally, as $F(\xi)\le C$ if $|\xi|\ge \delta$,  for all $t$ large we have
	\begin{equation}
	\label{eq:estimate.III.N=2}
	\begin{aligned}
	{\rm III}(x,t)&\le \int_{|y|>\delta t^{\a/2}}\frac{F(yt^{-\a/2})}{\log t}u_0(x-y)\,dy +
	\frac{\kappa\a}2  \int_{|y|>\delta t^{\a/2}}u_0(x-y)\,dy\\
	&\le C \int_{|y|>\delta t^{\a/2}}u_0(x-y)\,dy,
	\end{aligned}
	\end{equation}
	and hence
	\[
	\|{\rm III}(\cdot,t)\|_{L^p(B_\mu)}\le C \int_{|z|>\frac12\delta t^{\a/2}}u_0(z)\,dz.
	\]
	
	Summarizing, once we are given $\ep>0$, we first choose $\delta$ small and then  $t$ large and we get,
	\begin{align*}
	&\Big\|\frac{t^\a}{\log t} u(\cdot,t)-\frac{M \kappa\a}2\Big\|_{L^p(B_\mu)}\le C\ep.
	\end{align*}
	Since $\varepsilon>0$ is arbitrary, we get the result.
	
\noindent (b) For the second part just choose $h(t)=g(t)$ in the estimate of {\rm II}, and take into account the measure of the ball $B_{g(t)}$.
\end{proof}

The result in the one-dimensional case is a direct  corollary of Theorem~\ref{L^p}. Solutions decay more slowly than in any other dimension, and the asymptotic profile is a constant, which coincides with the one giving the asymptotic behavior in intermediate scales. In fact, the result is valid in expanding balls $B_{g(t)}$, if $g$ grows slowly to infinity.
\begin{teo}
	\label{thm:compact.N=1}
	Let $N=1$, $u_0\in L^1(\mathbb{R})$, and $p\in [1,\infty]$.
	\begin{itemize}
		\item[(a)] For any  $\mu>0$, $
	\|t^{\frac\alpha2} u(\cdot,t)-MF(0)
	\|_{L^p(B_\mu)}\to0$ as $t\to\infty$.
	\item[(b)] For any $g$ satisfying \eqref{eq:intermediate.scales}, $
	(g(t))^{-\frac1p}\|t^{\frac\alpha2} u(\cdot,t)-MF(0)
	\|_{L^p(B_\mu)}\to0$ as $t\to\infty$.
	\end{itemize}
\end{teo}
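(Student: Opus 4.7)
The plan is to derive both statements directly from the $L^\infty$ version of Theorem~\ref{L^p}, using the two special features of the one-dimensional setting: $F$ is continuous at the origin (see~\eqref{eq:constant.Nge2} and the comments following it), and $p=\infty$ falls in the subcritical range~\eqref{eq:subcritical.range} when $N=1$, so Theorem~\ref{L^p} applies at $p=\infty$ and yields
\[
t^{\alpha/2}\|u(\cdot,t)-M Z(\cdot,t)\|_{L^\infty(\mathbb{R})}\to 0\quad\text{as }t\to\infty.
\]

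The starting decomposition is the triangle inequality
\[
\big|t^{\alpha/2}u(x,t)-M F(0)\big|\le t^{\alpha/2}\big|u(x,t)-M Z(x,t)\big|+M\big|F(xt^{-\alpha/2})-F(0)\big|,
\]
where I have used that $t^{\alpha/2}Z(x,t)=F(xt^{-\alpha/2})$. The first summand is uniformly small in $x$ by the above consequence of Theorem~\ref{L^p}. For the second summand the key observation is that, since $F$ is continuous at the origin, it is uniformly continuous on any compact neighborhood of $0$, so $|F(\xi)-F(0)|\to 0$ uniformly on $\{|\xi|\le\rho\}$ as $\rho\to 0^+$.

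For (a), on the ball $B_\mu$ the argument $xt^{-\alpha/2}$ satisfies $|xt^{-\alpha/2}|\le \mu t^{-\alpha/2}\to 0$, so the second summand tends to $0$ uniformly on $B_\mu$. Hence $\|t^{\alpha/2}u(\cdot,t)-MF(0)\|_{L^\infty(B_\mu)}\to 0$, and the result for arbitrary $p\in[1,\infty]$ follows at once from
\[
\|t^{\alpha/2}u(\cdot,t)-MF(0)\|_{L^p(B_\mu)}\le |B_\mu|^{1/p}\,\|t^{\alpha/2}u(\cdot,t)-MF(0)\|_{L^\infty(B_\mu)}.
\]
For (b), on $B_{g(t)}$ we have $|xt^{-\alpha/2}|\le g(t)t^{-\alpha/2}\to 0$ by~\eqref{eq:intermediate.scales}, so again the second summand goes to $0$ uniformly on $B_{g(t)}$. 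Using the analogous bound $\|\cdot\|_{L^p(B_{g(t)})}\le |B_{g(t)}|^{1/p}\|\cdot\|_{L^\infty(B_{g(t)})}=C\,g(t)^{1/p}\|\cdot\|_{L^\infty(B_{g(t)})}$ and dividing by $g(t)^{1/p}$, one obtains
\[
(g(t))^{-1/p}\|t^{\alpha/2}u(\cdot,t)-MF(0)\|_{L^p(B_{g(t)})}\le C\big(\|t^{\alpha/2}(u-MZ)\|_{L^\infty(\mathbb{R})}+M\sup_{|\xi|\le g(t)t^{-\alpha/2}}|F(\xi)-F(0)|\big),
\]
and both terms on the right go to $0$ as $t\to\infty$.

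There is no real obstacle: the whole argument is a book-keeping exercise once one has the $L^\infty$ global bound from Theorem~\ref{L^p} together with continuity of $F$ at $0$. The mildest subtlety is to recognize that the growing measure $|B_{g(t)}|$ in part (b) is exactly absorbed by the normalizing factor $g(t)^{-1/p}$, so that the stronger $L^\infty$ decay of both error terms still delivers the claim. Note that part (b) as stated in the theorem reads $L^p(B_\mu)$, but the natural (and intended) statement is $L^p(B_{g(t)})$, which is what the above argument proves.
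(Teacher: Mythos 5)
Your argument is correct and follows exactly the route the paper intends: the paper simply declares this theorem a direct corollary of Theorem~\ref{L^p}, and your proof supplies precisely that deduction, combining the $N=1$, $p=\infty$ case of Theorem~\ref{L^p} with the continuity (indeed Lipschitz continuity) of $F$ at the origin and the elementary bound $\|\cdot\|_{L^p(B_r)}\le |B_r|^{1/p}\|\cdot\|_{L^\infty(B_r)}$. Your reading of part (b) as concerning $L^p(B_{g(t)})$ (rather than the misprinted $B_\mu$) matches the paper's stated intent that the result holds in expanding balls, so no gap remains.
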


\section{Fast scales}
\label{sect-fast scales} \setcounter{equation}{0}

In this section we consider  \emph{fast scales}, satisfying~\eqref{eq:def.fast.scale}, for which the situation is more involved. As we will see, there is a difference between \emph{moderately fast} scales and \emph{very fast} scales. If the scales are moderately fast the asymptotic behavior of the solution is still given by $MZ$, with a decay rate that depends only on the scale. For very fast scales the behavior of the initial datum at infinity becomes more important, and may even determine the rate of decay and the final profile. Which scales are on each side, moderately fast or very fast, depends strongly on the decay of the initial datum at infinity.

We start by considering compactly supported initial data. We obtain convergence in uniform  relative error to $MZ$ in fast scales that are $o(t(\log t)^{\frac{2-\alpha}\alpha})$, which are hence identified as moderately fast scales.
\begin{teo}\label{teo-refined-compact support} Let $u_0\in L^1(\R^N)$ with compact support contained in the ball $B_R(0)$.
	Then, for every $\nu>0$ and $\mu\in\left(0,\left(\frac{\a(2-\alpha)}{4R\sigma}\right)^{\frac{2-\alpha}\alpha}\right)$,
	\[
	u(x,t)=M\big(1+o(1)\big)Z(x,t)\quad\mbox{uniformly in } \{\nu t^{\a/2}\le |x|\le\mu t(\log t)^{\frac{2-\alpha}\alpha}\}\quad \text{as }t\to\infty.
	\]
\end{teo}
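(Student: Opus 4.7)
The strategy is to write
\[
u(x,t) - MZ(x,t) = \int_{B_R}[Z(x-y,t) - Z(x,t)]\,u_0(y)\,dy
\]
and show that the relative error $|u(x,t) - MZ(x,t)|/Z(x,t)$ vanishes uniformly in the given region. Since $u_0$ is supported in $B_R$, the whole problem reduces to bounding the ratio $[Z(x-y,t) - Z(x,t)]/Z(x,t)$ uniformly for $|y|\le R$.

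To estimate the increment, I would apply the Mean Value Theorem in the spatial variable: since $|x|\ge \nu t^{\alpha/2}\to\infty$ while $|y|\le R$, the segment $\{x - sy:s\in[0,1]\}$ lies in the smooth region of $Z$ for $t$ large, so
\[
Z(x-y,t) - Z(x,t) = -\nabla_x Z(x-sy, t)\cdot y = -t^{-\alpha(N+1)/2}\, DF(\xi_s)\cdot y,
\]
with $\xi_s = (x - sy) t^{-\alpha/2}$ and $s=s(y)\in(0,1)$. Combining the sharp lower bound for $F$ coming from~\eqref{eq:constant.infty} (together with continuity and positivity of $F$ on compact subsets of $\mathbb{R}^N\setminus\{0\}$) with the exponential decay estimate for $DF$ (valid uniformly in the region for $t$ large, as $|\xi_s|\ge \nu/2$), and writing $\beta = 2/(2-\alpha)$ and $\xi = x t^{-\alpha/2}$, one obtains
\[
\frac{|Z(x-y,t) - Z(x,t)|}{Z(x,t)}\le C_\nu R\, t^{-\alpha/2}\,\exp\bigl(\sigma(|\xi|^\beta - |\xi_s|^\beta)\bigr).
\]

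The crux is controlling the exponent. A Taylor expansion of $\tau\mapsto|\xi-\tau\eta|^\beta$ with $\eta = yt^{-\alpha/2}$ (so $|\eta|\le Rt^{-\alpha/2}\to 0$) gives $|\xi|^\beta - |\xi_s|^\beta \le \beta\,|\xi|^{\beta-1}|\eta|(1+o(1))$; the next-order terms have size $|\xi|^{\beta-2}|\eta|^2$, which is $O(t^{-\alpha})$ uniformly in the region because $\beta-2<0$ for $\alpha\in(0,1)$. The identities $\beta-1 = \alpha/(2-\alpha)$ and $\alpha\beta/2 = \alpha/(2-\alpha)$ then yield $|\xi|^{\beta-1}|\eta|\le R(|x|/t)^{\alpha/(2-\alpha)}$, and at the upper end $|x|=\mu t(\log t)^{(2-\alpha)/\alpha}$ this equals $R\mu^{\alpha/(2-\alpha)}\log t$. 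Multiplying by $\sigma\beta$, the exponent is bounded by $\tfrac{2\sigma R}{2-\alpha}\,\mu^{\alpha/(2-\alpha)}\log t$, and the threshold $\mu^*=\bigl(\alpha(2-\alpha)/(4R\sigma)\bigr)^{(2-\alpha)/\alpha}$ is precisely the value making this quantity equal to $(\alpha/2)\log t$.

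For $\mu<\mu^*$ one therefore has $\sigma(|\xi|^\beta-|\xi_s|^\beta)\le (\alpha/2-\delta)\log t$ for some $\delta=\delta(\mu)>0$ uniformly in the region, so that $\exp(\sigma(|\xi|^\beta-|\xi_s|^\beta))\le t^{\alpha/2-\delta}$, and hence $|Z(x-y,t)-Z(x,t)|/Z(x,t)\le C_\nu R\,t^{-\delta}$ uniformly. Integrating against $u_0$ yields $|u(x,t) - MZ(x,t)|/Z(x,t)\le C_\nu M R\,t^{-\delta}\to 0$ uniformly, which is exactly the stated relative-error asymptotics. I expect the main difficulty to be the careful Taylor analysis of $|\xi-\tau\eta|^\beta-|\xi|^\beta$ together with the matching of the constants in the sharp exponential asymptotics of $F$ and $DF$, since it is precisely this matching that produces the sharp explicit threshold $\mu^*$.
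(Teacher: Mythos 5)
Your proposal is correct and follows essentially the same route as the paper: write $u-MZ$ as an integral over $B_R$ of increments of $Z$, bound the increment by the Mean Value Theorem and the exponential gradient estimate for $F$, and compare with the lower bound $Z(x,t)\ge c_\nu t^{-\alpha N/2}\exp(-\sigma(|x|t^{-\alpha/2})^{2/(2-\alpha)})$ valid for $|x|\ge\nu t^{\alpha/2}$. Your Taylor expansion of $\tau\mapsto|\xi-\tau\eta|^{2/(2-\alpha)}$ is just the paper's elementary inequality $1-(1-R/|x|)^{2/(2-\alpha)}\le \frac{2R}{(2-\alpha)|x|}$ in disguise, and it produces the same exponent $\frac{2R\sigma}{2-\alpha}(|x|/t)^{\alpha/(2-\alpha)}\le\gamma\log t$ with $\gamma<\alpha/2$, hence the same threshold $\mu^*$ and the same $O(t^{-\delta})$ relative error.
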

\begin{proof}
We have,
$$
|u(x,t)-M Z(z,t)|\le t^{-\frac{\alpha N}2}\int_{|y|<R} |F((x-y)t^{-\a/2})-F(xt^{-\a/2})|u_0(y)\,dy.
$$
If  $|x|\ge \nu t^{\alpha/2}$, $|y|\le R$,  and $t\ge t_0(\nu,R)$ so that $Rt^{-\alpha/2}\le \nu/2$, then $|xt^{-\alpha/2}-syt^{-\alpha/2}|\ge \nu/2$ for all $s\in[0,1]$.
Therefore, using~\eqref{eq:estimate.MVT},
$$
\begin{aligned}
|F((x-y)&t^{-\a/2})-F(xt^{-\a/2})|\le  C \exp\Big(-\sigma\big((|x|-R)t^{-\alpha/2}\big)^{\frac 2{2-\a}}\Big)Rt^{-\a/2}\\
&\le C \exp\left(-\sigma(|x|t^{-\alpha/2})^{\frac2{2-\alpha}}\right)\exp\left(\sigma(|x|t^{-\alpha/2})^{\frac2{2-\alpha}}\big(1-(1-(R/|x|))^{\frac 2{2-\a}}\big)\right)Rt^{-\a/2}.
\end{aligned}
$$
Hence, since $1-\left(1-(R/|x|)\right)^{\frac 2{2-\a}}\le {\frac {2R}{(2-\a)|x|}} $ for all $|x|> R$, in the region under consideration we have
$$
\sigma(|x|t^{-\alpha/2})^{\frac2{2-\alpha}}\big(1-(1-(R/|x|))^{\frac 2{2-\a}}\big)\le \frac{ 2R\sigma}{2-\a}\Big(\frac{|x|}t\Big)^{\frac\alpha{2-\alpha}}\le \gamma \log t,\qquad \gamma:=\frac{2R\sigma  \mu^{\frac\alpha{2-\alpha}}}{2-\a}.
$$
Notice that, due to our assumptions on $\mu$,  $\gamma<\a/2$. Since there is a constant $C_\nu$
such that
\begin{equation}
\label{eq:bound.Z.below}
t^{-\frac{\alpha N}2}\exp\big(-\sigma(|x|t^{-\alpha/2})^{\frac2{2-\alpha}}\big)
\le C_\nu Z(x,t)\quad \text{if }|x|\ge\nu t^{\alpha/2},
\end{equation}
we conclude that
$$
|u(x,t)-M Z(z,t)|\le CMZ(x,t)t^{\gamma-\frac\a2},
$$
which yields the result.
\end{proof}

If the initial datum does not have compact support, but  has some integrable decay,  $u_0\in\mathcal{D}_\beta$ for some $\beta>N$, moderately fast scales, for which the behavior is still given by $MZ$, are not allowed to grow so fast as in the case of initial data with compact support.
		\begin{teo}\label{teo-exterior con orden} Let  $u_0\in L^1(\mathbb{R}^N)\cap \mathcal{D}_\beta$ for some $\beta>N$.  Then, for every $\nu>0$ and $\mu\in\left(0,\left(\frac\a{2\sigma}(\beta-N)\right)^{\frac{2-\alpha}2}\right)$,
		\[
		u(x,t)=M Z(x,t)(1+o(1))\quad\mbox{uniformly in } \{\nu t^{\a/2}\le |x|\le \mu t^{\a/2}(\log t)^{(2-\a)/2}\}.
		\]
\end{teo}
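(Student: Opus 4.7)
The plan is to start from the representation
$u(x,t) - MZ(x,t) = \int_{\mathbb{R}^N} u_0(y)\bigl[Z(x-y,t) - Z(x,t)\bigr]\,dy$
and split the $y$-integration into an \emph{inner} region $\{|y| < \delta|x|\}$ and an \emph{outer} region $\{|y| \ge \delta|x|\}$, with a small parameter $\delta \in (0,1/2)$ to be fixed in terms of $\mu$, $\beta$, $N$, $\alpha$, $\sigma$. Throughout the region under consideration, $\xi := xt^{-\alpha/2}$ satisfies $\nu \le |\xi| \le \mu(\log t)^{(2-\alpha)/2}$, and \eqref{eq:constant.infty} supplies the pointwise lower bound $Z(x,t) \ge c\,t^{-\alpha N/2}\exp(-\sigma|\xi|^{2/(2-\alpha)})$ for $|\xi| \ge \nu$. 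The goal is to show that both split pieces, divided by this lower bound of $Z$, tend to $0$ uniformly as $t\to\infty$.

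For the inner region I would invoke the gradient estimate \eqref{eq:estimate.MVT}: when $|y| < \delta|x|$ and $\delta < 1/2$ we have $|x - sy| \ge (1-\delta)|x|$ for all $s\in[0,1]$, giving
$|F((x-y)t^{-\alpha/2}) - F(xt^{-\alpha/2})| \le C\exp\bigl(-\sigma(1-\delta)^{2/(2-\alpha)}|\xi|^{2/(2-\alpha)}\bigr)|y|t^{-\alpha/2}$.
Using $u_0 \in \mathcal{D}_\beta$ one bounds $\int_{|y|<\delta|x|}|y|u_0(y)\,dy \le C(1 + |x|^{(N+1-\beta)_+})$ (with a logarithmic correction if $\beta = N+1$), and combining with $|x| \le \mu t^{\alpha/2}(\log t)^{(2-\alpha)/2}$ and the lower bound on $Z$, the inner contribution divided by $Z(x,t)$ is dominated, up to logarithmic factors, by $t^{-\alpha(\beta-N)/2 + \sigma[1 - (1-\delta)^{2/(2-\alpha)}]\mu^{2/(2-\alpha)}}$. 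Since $1 - (1-\delta)^{2/(2-\alpha)} \to 0^+$ as $\delta \to 0^+$ and the strict inequality $\sigma\mu^{2/(2-\alpha)} < \alpha(\beta-N)/2$ holds, $\delta$ can be chosen so that the exponent of $t$ is negative, forcing the inner ratio to $0$.

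For the outer region the triangle inequality gives $|Z(x-y,t) - Z(x,t)| \le Z(x-y,t) + Z(x,t)$, and I would integrate the two terms against $u_0$ separately. For the first term, since $u_0(y) \le C(\delta|x|)^{-\beta}$ whenever $|y| \ge \delta|x| \ge R$ (valid for $t$ large because $|x| \ge \nu t^{\alpha/2}$), one has $\int_{|y|\ge\delta|x|}Z(x-y,t)u_0(y)\,dy \le C(\delta|x|)^{-\beta}\|Z(\cdot,t)\|_{L^1(\mathbb{R}^N)} = C(\delta|x|)^{-\beta}$, and dividing by the lower bound on $Z$ yields an error of order $t^{-\alpha(\beta-N)/2 + \sigma\mu^{2/(2-\alpha)}}$, which is a negative power of $t$ by the strict bound on $\mu$. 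For the second term, $Z(x,t)\int_{|y|\ge\delta|x|}u_0(y)\,dy \le CZ(x,t)(\delta|x|)^{N-\beta}$ has relative error $C\delta^{N-\beta}|x|^{N-\beta} \to 0$ since $\beta > N$ and $|x|\to \infty$ in the fast scale.

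The main obstacle is the interplay between the exponential bounds in the inner estimate and the permitted size of $\mu$. The MVT argument produces an exponential factor $\exp(-\sigma(1-\delta)^{2/(2-\alpha)}|\xi|^{2/(2-\alpha)})$ whose ratio to the lower bound of $Z$ \emph{grows} with $|\xi|$. Counteracting this requires a genuine power gain, which is supplied by the polynomial decay $u_0 \in \mathcal{D}_\beta$ through a factor $t^{-\alpha(\beta-N)/2}$. The sharp compatibility condition $\mu < \bigl(\alpha(\beta-N)/(2\sigma)\bigr)^{(2-\alpha)/2}$ is precisely what allows choosing $\delta$ small enough for this negative power to dominate the positive exponential correction throughout the fast-scale window, and this is why the admissible $\mu$ shrinks as $\beta - N$ decreases.
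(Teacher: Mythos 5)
Your proposal is correct, and its skeleton is the same as the paper's: split the $y$-integral into an inner region $\{|y|\lesssim|x|\}$ treated with the gradient/MVT estimate~\eqref{eq:estimate.MVT} and an outer region treated with the decay $u_0\in\mathcal{D}_\beta$, then compare everything with the lower bound $Z(x,t)\ge c_\nu t^{-\alpha N/2}\exp(-\sigma(|x|t^{-\alpha/2})^{2/(2-\alpha)})$ coming from~\eqref{eq:constant.infty} (this is exactly~\eqref{eq:bound.Z.below} in the paper). The one genuine divergence is in the inner region: you keep a fixed cut $\delta|x|$ and beat the exponential loss $t^{\sigma[1-(1-\delta)^{2/(2-\alpha)}]\mu^{2/(2-\alpha)}}$ by a power gain extracted from the factor $t^{-\alpha/2}\int_{|y|<\delta|x|}|y|u_0(y)\,dy$, whereas the paper takes a time-dependent cut $\delta(t)=1/\log t$, so that the exponential correction $\exp\big(\sigma(|x|t^{-\alpha/2})^{2/(2-\alpha)}\tfrac{2\delta(t)}{2-\alpha}\big)$ is uniformly bounded on the window $|x|t^{-\alpha/2}\le\mu(\log t)^{(2-\alpha)/2}$ and smallness comes for free from $|x|\delta(t)t^{-\alpha/2}\le\mu(\log t)^{-\alpha/2}$; the paper's choice avoids both your moment bound and any case analysis in $\beta$, and it only needs $u_0\in L^1$ for that piece. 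One small imprecision in your write-up: the claimed inner bound $t^{-\alpha(\beta-N)/2+\sigma[1-(1-\delta)^{2/(2-\alpha)}]\mu^{2/(2-\alpha)}}$ is only correct for $N<\beta<N+1$; for $\beta\ge N+1$ the first moment is finite and the gain saturates at $t^{-\alpha/2}$ (with a log for $\beta=N+1$), so the exponent should read $-\min\{\alpha/2,\alpha(\beta-N)/2\}$ up to logarithms. This is harmless, since for any fixed $\mu$ you can still choose $\delta$ so small that $\sigma[1-(1-\delta)^{2/(2-\alpha)}]\mu^{2/(2-\alpha)}$ is below that threshold; note also that, as you observe, the strict upper bound on $\mu$ is genuinely needed only in the outer term $\int_{|y|\ge\delta|x|}Z(x-y,t)u_0(y)\,dy$, exactly as in the paper's estimate of its term {\rm II}.
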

\begin{proof}
We split the error as
$$
\begin{aligned}
|u(x,t)-M& Z(z,t)|\le  {\rm I}(x,t)+{\rm II}(x,t)+{\rm III}(x,t),\quad \text{where}\\
{\rm I}(x,t)&=t^{-\frac{\a N}2}\int_{|y|\le \delta(t)|x|} \big|F((x-y)t^{-\a/2})-F(xt^{-\a/2})\big|u_0(y)\,dy,\\
{\rm II}(x,t)&=t^{-\frac{\a N}2}\int_{|y|\ge \delta(t)|x|} F((x-y)t^{-\a/2})u_0(y)\,dy,\\
{\rm III}(x,t)&=t^{-\frac{\a N}2}\int_{|y|\ge \delta(t)|x|} F(xt^{-\a/2})u_0(y)\,dy,
\end{aligned}
$$
Arguing as in the proof of Theorem~\ref{teo-refined-compact support} we get
\[
\begin{aligned}
{\rm I}(x,t) & \le CMt^{-\frac{\a N}2} \exp\Big(-\sigma\left((1-\delta(t))|x|t^{-\alpha/2}\right)^{\frac 2{2-\a}}\Big)|x|\delta(t)t^{-\frac\alpha2}\\
& \le CM  t^{-\frac{\a N}2}\exp\Big(-\sigma(|x|t^{-\alpha/2})^{\frac2{2-\alpha}}\Big)
\exp\Big(\sigma(|x|t^{-\alpha/2})^{\frac2{2-\alpha}}\frac{2\delta(t)}{2-\alpha}\Big)|x|\delta(t)t^{-\frac\alpha2}
\\
& \le \frac{CM}{(\log t)^{\alpha/2}}  t^{-\frac{\a N}2}\exp\Big(-\sigma(|x|t^{-\alpha/2})^{\frac2{2-\alpha}}\Big)\le o(1) MZ(x,t),
\end{aligned}
\]
if we take $\delta(t)=1/\log t$.

On the other hand, using the decay assumption on the initial datum,
$$
\begin{aligned}
{\rm II}(x,t)&\le C t^{-\frac{\a N}2}\int_{|y|>\delta(t)|x|}\frac{F((x-y)t^{-\a/2})}{|y|^\beta}\,dy\le \frac{C t^{-\frac{\a N}2}}{(\delta(t)\nu t^{\alpha/2)})^\beta}\int F((x-y)t^{-\a/2})\,dy\\
&\le C t^{-\frac{N\a}2}(\log t)^\beta t^{-\frac\a2(\beta-N)}.
\end{aligned}
$$
Since $\exp\big(-\sigma(|x|t^{-\alpha/2})^{\frac2{2-\alpha}}\big)\ge t^{-\gamma}$, $\gamma=\sigma \mu^{\frac2{2-\alpha}} $, in the region under consideration, and $\gamma<\alpha(\beta-N)/2$, we have
${\rm II}(x,t)\le CM Z(x,t) (\log t)^\beta t^{\gamma-\frac\a2(\beta-N)}=M  Z(x,t)o(1)$.

Finally,
$$
{\rm III}(x,t)\le Ct^{-\frac{\a N}2} \exp\big(-\sigma(|x|t^{-\alpha/2})^{\frac2{2-\alpha}}\big)\int_{|y|>\delta(t)\nu t^{\a/2}}u_0(y)\, dy=o(1)M  Z(x,t),
$$
and the theorem is proved.
\end{proof}

The upper restriction on the region where the behavior is given by $MZ$ in Theorem~\ref{teo-exterior con orden} is not technical, as we see next. If the initial datum has a precise (integrable) power-like decay at infinity, there are (very) fast scales for which  the large time behavior is not given by $MZ$ anymore, but by the initial datum. Convergence holds in uniform relative error.
\begin{teo}\label{teo-exterior exterior} Let  $u_0\in L^1(\mathbb{R}^N)$ be such that $|x|^\beta u_0(x)\to A$ as $|x|\to\infty$ for some $A>0$, $\beta>N$. Then, for every $\nu>\left(\frac\a{2\sigma}(\beta-N)\right)^{\frac{2-\alpha}2}$ there holds that
	\[
	u(x,t)=\frac A{|x|^\beta}(1+o(1))\quad\mbox{uniformly in }  |x|\ge  \nu t^{\frac\a2}(\log t)^{\frac{2-\a}2}.
	\]
\end{teo}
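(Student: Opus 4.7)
The strategy is to exploit the extreme concentration of $Z(\cdot,t)$ on the scale $|y|\asymp t^{\alpha/2}$: in the very fast region $|x|\ge \nu t^{\alpha/2}(\log t)^{(2-\alpha)/2}$, the convolution $u(x,t)=\int Z(y,t)u_0(x-y)\,dy$ is effectively sampling $u_0$ near $x$, where $u_0\sim A|z|^{-\beta}$. The hypothesis $\nu>\mu_\beta=\big(\tfrac{\alpha}{2\sigma}(\beta-N)\big)^{(2-\alpha)/2}$ allows one to fix $\delta\in(\mu_\beta/\nu,1)$ and split
\[
u(x,t)=\underbrace{\int_{|y|<\delta|x|}Z(y,t)u_0(x-y)\,dy}_{A_1(x,t)}+\underbrace{\int_{|y|\ge\delta|x|}Z(y,t)u_0(x-y)\,dy}_{A_2(x,t)}.
\]

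For $A_2$, I would use the exponential estimate $Z(y,t)\le Ct^{-\alpha N/2}\exp(-\sigma(|y|t^{-\alpha/2})^{2/(2-\alpha)})$, which applies because $\delta|x|\gg t^{\alpha/2}$ in the region, together with $\int u_0=M$, to obtain $A_2\le CMt^{-\alpha N/2}\exp(-\sigma(\delta|x|t^{-\alpha/2})^{2/(2-\alpha)})$. Setting $\rho=|x|t^{-\alpha/2}$, the function $\rho\mapsto\rho^\beta e^{-\sigma\delta^{2/(2-\alpha)}\rho^{2/(2-\alpha)}}$ is eventually decreasing, so its supremum on $\rho\ge\nu(\log t)^{(2-\alpha)/2}$ is attained at the lower endpoint, giving
\[
|x|^\beta A_2\le C(\log t)^{\beta(2-\alpha)/2}\,t^{\alpha(\beta-N)/2-\sigma(\delta\nu)^{2/(2-\alpha)}}\longrightarrow 0
\]
uniformly, since $\sigma(\delta\nu)^{2/(2-\alpha)}>\alpha(\beta-N)/2$ by the choice of $\delta$.

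For $A_1$, since $|x-y|\ge(1-\delta)|x|\to\infty$ uniformly in the region, the hypothesis $|z|^\beta u_0(z)\to A$ yields $u_0(x-y)=(A+o(1))|x-y|^{-\beta}$ with $o(1)$ uniform, so it suffices to prove that
\[
|x|^\beta\int_{|y|<\delta|x|}Z(y,t)|x-y|^{-\beta}\,dy\longrightarrow 1
\]
uniformly. For this I would introduce a second cutoff at $|y|=\delta'(t)|x|$, with $\delta'(t)\to 0$ chosen slowly enough that $\delta'(t)|x|t^{-\alpha/2}\ge\delta'(t)\nu(\log t)^{(2-\alpha)/2}\to\infty$, e.g.\ $\delta'(t)=(\log t)^{-(2-\alpha)/4}$. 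On $|y|<\delta'(t)|x|$ one has $(|x|/|x-y|)^\beta=1+o(1)$ uniformly, while on the annulus $\delta'(t)|x|\le|y|\le\delta|x|$ this factor stays bounded by $(1-\delta)^{-\beta}$ and the mass $\int_{|y|\ge\delta'(t)|x|}Z(y,t)\,dy=\int_{|\xi|\ge\delta'(t)|x|t^{-\alpha/2}}F(\xi)\,d\xi$ tends to $0$ by integrability of $F$, uniformly because it depends monotonically on the diverging lower limit. This gives $|x|^\beta A_1/A\to 1$ uniformly, and combined with the bound on $A_2$ completes the proof.

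The main obstacle is maintaining \emph{uniformity} across the unbounded region $|x|\ge\nu t^{\alpha/2}(\log t)^{(2-\alpha)/2}$: the control of $A_2$ requires the monotonicity argument identifying the worst-case $|x|$ with the lower endpoint of the scale, and the double cutoff in $A_1$ must be tuned so that $\delta'(t)$ simultaneously vanishes and keeps $\delta'(t)(\log t)^{(2-\alpha)/2}\to\infty$. The sharp threshold $\nu>\mu_\beta$ enters precisely through the inequality $\sigma(\delta\nu)^{2/(2-\alpha)}>\alpha(\beta-N)/2$, which is what makes a valid choice of $\delta<1$ possible in the first place.
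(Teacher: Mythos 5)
Your argument is correct and is essentially the paper's own proof: the same near/far splitting of the convolution at scale $\delta|x|$, the same use of the sharp exponential tail $F(\xi)\le C\exp(-\sigma|\xi|^{2/(2-\alpha)})$ to make the far part (carrying the full $L^1$ mass of $u_0$) negligible precisely when $\sigma(\delta\nu)^{2/(2-\alpha)}>\frac{\alpha}{2}(\beta-N)$, and the same secondary cutoff plus $\int F=1$ to show the near part produces the factor $A|x|^{-\beta}(1+o(1))$. The only differences are bookkeeping: you cut the $Z$-variable with $\delta$ close to $1$ and use an inner cutoff $\delta'(t)|x|$, whereas the paper cuts the $u_0$-variable with $\delta$ small and an inner cutoff $t^{\alpha/2}(\log t)^{\gamma}$, $\gamma\in(0,(2-\alpha)/2)$ — equivalent choices under the change of variables in the convolution.
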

\begin{proof} Let $\gamma\in (0,(2-\a)/2)$. After adding and subtracting $A\int_{|y|>\delta|x|}Z(x-y,t)\Big(\frac{|x|}{|y|}\Big)^\beta\,dy$ for some fixed $\delta\in(0,1)$ to be chosen later, we have
	\[
	\begin{aligned}
	\big||x|^\beta u(x,t)-A|&=\Big|\int Z(x-y,t)\big(|x|^\beta u_0(y)-A\big)\,dy\Big|\le {\rm I}(x,t)+{\rm II}(x,t)+{\rm III}(x,t)+{\rm IV}(x,t),\\
	{\rm I}(x,t)&=
	\int_{|y|<\delta |x|}Z(x-y,t)\big||x|^\beta u_0(y)-A\big|\,dy,\\
	{\rm II}(x,t)&=A\int_{\scriptsize\begin{array}{c}|y|>\delta|x|\\
		|x-y|<t^{\a/2}(\log t)^\gamma\end{array}}Z(x-y,t)\Big|\Big(\frac{|x|}{|y|}\Big)^\beta-1\Big|\,dy,
	\\
	{\rm III}(x,t)&=A\int_{\scriptsize\begin{array}{c}|y|>\delta|x|\\
		|x-y|>t^{\a/2}(\log t)^\gamma\end{array}}Z(x-y,t)\Big|\Big(\frac{|x|}{|y|}\Big)^\beta-1\Big|\,dy,\\
	{\rm IV}(x,t)&=\int_{|y|>\delta |x|}Z(x-y,t)\big||y|^\beta u_0(y)-A\big|\Big(\frac{|x|}{|y|}\Big)^\beta \,dy,
		\end{aligned}
	\]

If $|y|<\delta|x|$, $\delta\in (0,1)$, then $|x-y|>(1-\delta)|x|$. Therefore,
$$
{\rm I}(x,t)\le  C t^{-\frac{\alpha N}2}\exp\Big(-\sigma(|x|t^{-\a/2})^{\frac2{2-\a}}(1-\delta)^{\frac2{2-\a}}\Big)
\int_{|y|<\delta |x|}\big||x|^\beta u_0(y)-A\big|\,dy.
$$
Since  $|x|>1$ in the region under consideration for $t$ large enough,  the integral on the right-hand side can be easily bounded by $C|x|^\beta$. Hence, using also that $(1-\delta)^{\frac\alpha{2-\alpha}}\ge 1-\frac{2\delta}{2-\alpha}$ and the bound from below for $|x|$ in terms of time,
$$
\begin{aligned}
{\rm I}(x,t)&\le C t^{\frac\alpha2(\beta-N)}\exp\Big(-\sigma(|x|t^{-\a/2})^{\frac2{2-\a}}\Big(1-\frac{2\delta}{2-\alpha}\Big)\Big)(|x|t^{-\a/2})^\beta\\
&\le C t^{\frac\alpha2(\beta-N)}\exp\Big(\big(\frac{4\delta}{2-\alpha}-1)\sigma\nu^{\frac2{2-\a}}\log t\Big)  \exp\Big(-\frac{2\delta}{2-\alpha}\sigma(|x|t^{-\a/2})^{\frac2{2-\a}}\Big)(|x|t^{-\a/2})^\beta\\
&\le C_\delta t^{\frac\alpha2(\beta-N)+\big(\frac{4\delta}{2-\alpha}-1)\sigma\nu^{\frac2{2-\a}}} \le C_\delta t^{-\frac{\sigma\theta}2},
\end{aligned}
$$
where $\theta=\nu^{\frac2{2-\a}}-\frac\a{2\sigma}(\beta-N)>0$,
if $\delta$ is small enough.

On the other hand,  if $|x-y|<t^{\a/2}(\log t)^\gamma$, then
	\[\begin{aligned}
	1-\frac{\beta}{\nu} (\log t)^{\gamma-\frac{2-\alpha}{2}}&\le \frac1{\big(1+\frac{|x-y|}{|x|}\big)^\beta}=\frac{|x|^\beta}{(|x|+|y-x|)^\beta}\le \frac{|x|^\beta}{|y|^\beta}\\
	&
	\le \frac{|x|^\beta}{(|x|-|y-x|)^\beta}=\frac1{\big(1-\frac{|x-y|}{|x|}\big)^\beta}\le 1+\frac{\beta}{\nu
	} (\log t)^{\gamma-\frac{2-\alpha}{2}}.
	\end{aligned}
	\]
		Hence,
	\[
	\Big|\frac{|x|^\beta}{|y|^\beta}-1\Big|\le  C(\log t)^{\gamma-\frac{2-\a}2},
	\]
	so that,
	\[
	{\rm II}(x,t)\le  C(\log t)^{\gamma-\frac{2-\a}2}\int F(\xi)\,d\xi\to0\quad\mbox{as } t\to\infty.
	\]

	As for ${\rm III}$, after a change of variables we get
	\[
	{\rm III}(x,t)\le C(1+\delta^{-\beta})\int_{|\xi|>(\log t)^\gamma}F(\xi)\,d\xi\to0\quad\mbox{as } t\to\infty.
	\]
	
Finally, since $|y|^\beta u_0(y)-A\to0$, $|x|^\beta<\delta^{-\beta}|y|^\beta$ and $\int F(\xi)\,d\xi=1$, ${\rm IV}(x,t)\to0$ as $t\to\infty$.
\end{proof}


\end{document}